\tikzset{every picture/.style={line width=0.55pt}}%
\tikzset{>={Classical TikZ Rightarrow[scale=1.66]}}
\tikzset{% Show curve controls
  show curve controls/.style={
	postaction={
	  decoration={
		show path construction,
		curveto code={
		  \draw [blue,opacity=0.85, line width=0.2pt] 
			(\tikzinputsegmentfirst) -- (\tikzinputsegmentsupporta)
			(\tikzinputsegmentlast) -- (\tikzinputsegmentsupportb);
		  \fill [red, opacity=0.45] 
			(\tikzinputsegmentsupporta) circle [radius=1pt]
			(\tikzinputsegmentsupportb) circle [radius=1pt];
		}
	  },
	  decorate
}}}
\definecolor{linkcolor}{rgb}{0,0,0.675}%
\newtheorem{theorem}{Theorem}[section]
\newtheorem{lemma}[theorem]{Lemma}
\newtheorem{corollary}[theorem]{Corollary}
\newtheorem{proposition}[theorem]{Proposition}
\theoremstyle{definition}
\newtheorem{remark}[theorem]{Remark}
\newtheorem{definition}[theorem]{Definition}
\newtheorem{example}[theorem]{Example}
\newtheorem{notation}[theorem]{Notation}
\newtheorem{algorithm}[theorem]{Algorithm}
\makeatletter\@addtoreset{equation}{section}\makeatother
\newcommand*{\Spec}{\operatorname{Spec}}
\newcommand*{\Tors}{\operatorname{Tors}}
\newcommand*{\Pic}{\operatorname{Pic}}
\newcommand*{\e}{\op{\sf e}}
\newcommand*{\xto}[1]{\xrightarrow{\,#1\,}}
\newcommand*{\da@rightarrow}{\mathchar"0\hexnumber@\symAMSa 4B }
\newcommand*{\da@leftarrow}{\mathchar"0\hexnumber@\symAMSa 4C }
\newcommand*{\xdashrightarrow}[2][]{%
  \mathrel{%
	\mathpalette{\da@xarrow{#1}{#2}{}\da@rightarrow{\,}{}}{}%
  }%
}
\newcommand{\xdashleftarrow}[2][]{%
  \mathrel{%
	\mathpalette{\da@xarrow{#1}{#2}\da@leftarrow{}{}{\,}}{}%
  }%
}
\newcommand*{\da@xarrow}[7]{%
  % #1: below
  % #2: above
  % #3: arrow left
  % #4: arrow right
  % #5: space left 
  % #6: space right
  % #7: math style 
  \sbox0{$\ifx#7\scriptstyle\scriptscriptstyle\else\scriptstyle\fi#5#1#6\m@th$}%
  \sbox2{$\ifx#7\scriptstyle\scriptscriptstyle\else\scriptstyle\fi#5#2#6\m@th$}%
  \sbox4{$#7\dabar@\m@th$}%
  \dimen@=\wd0 %
  \ifdim\wd2 >\dimen@
	\dimen@=\wd2 %   
  \fi
  \count@=2 %
  \def\da@bars{\dabar@\dabar@}%
  \@whiledim\count@\wd4<\dimen@\do{%
	\advance\count@\@ne
	\expandafter\def\expandafter\da@bars\expandafter{%
	  \da@bars
	  \dabar@ 
	}%
  }%  
  \mathrel{#3}%
  \mathrel{%   
	\mathop{\da@bars}\limits
	\ifx\\#1\\%
	\else
	  _{\copy0}%
	\fi
	\ifx\\#2\\%
	\else
	  ^{\copy2}%
	\fi
  }%   
  \mathrel{#4}%
}
\renewcommand*{\mod}{\operatorname{mod\,}}
\newcommand*{\ZZ}{\mathbb{Z}}
\newcommand*{\CC}{\mathbb{C}}
\newcommand*{\sZ}{{\mathsf{Z}}}
\let\op\operatorname
\author{Yonghwa Cho}
\address{Department of Mathematics, Gyeongsang National University, 501 Jinju-daero, Jinju 52828, Gyeongsangnam-do, Republic of Korea}
\email{yhcho@gnu.ac.kr}
\title{Counting $h^0(D)$ on primary Burniat surfaces}
\thanks{This work(GNU-2024-240034) was supported by the research grant of the new professor of the Gyeongsang National University in 2024.} %NRF-2022M3C1C8094326
\subjclass[2020]{14J29, 14J60}
\keywords{Burniat surfaces, Cohomology of divisors, Ulrich bundles}
\begin{document}
%:	vertical spacing of floating environments
\setlength{\intextsep}{0.5\baselineskip plus 2pt minus 2pt}
%:
%	\abovedisplayshortskip=0.5\baselineskip%
%	\belowdisplayshortskip=0.5\baselineskip%
%	\abovedisplayskip=0.4\baselineskip%
%	\belowdisplayskip=0.4\baselineskip%
%:
%:
\begin{abstract}
We study the cohomology of divisors on a Burniat surface $X$ with $K_X^2=6$. We provide an algorithm for computing the cohomology groups of arbitrary divisors on $X$. As an application, we prove that there are no Ulrich line bundles\,(with respect to an arbitrary polarization), and that there exists an Ulrich vector bundle of rank $2$ with respect to $3K_X$. The existence of Ulrich vector bundle of rank $2$ was previously established by Casnati, but our construction yields one that cannot be obtained by his method.
\end{abstract}
\maketitle
%:
% \setcounter{tocdepth}{1}
% \tableofcontents
%:
\section{Introduction}

A Burniat surface is a surface of general type with $p_g=q=0$ and $2 \leq K^2 \leq 6$, which can be obtained as a bidouble cover of a del Pezzo surface of degree $6$. The configuration of branch curves is called the Burniat configuration, which plays a crucial role in the study of divisors on these surfaces. In this article, we focus on the primary Burniat surfaces, namely, those with $K^2=6$. See \cite{BauerCatanese:Burniat1} for historical background and related results on Burniat surfaces.

In \cite{AlexeevOrlov:Burniat}, Alexeev and Orlov presented an exceptional collection of maximal length on the primary Burniat surface by analyzing the ramification curves over the Burniat configuration. The study was further extended in \cite{Alexeev:Burniat}, which presents a precise description of the semigroup of effective divisors.

The main goal of this article is to extend further the result of \cite{Alexeev:Burniat} for the primary case. We not only focus on the effectiveness of the divisors $D$, but also provide the systematic method for computing $H^p(D)$. It turns out that it is not  straightforward to find a closed formula for $H^p(D)$. Instead, we provide an algorithm that computes $H^p(D)$ completely, with a reasonable efficiency. This is the main result of the paper. %Unfortunately, however, to give its precise formulation we need a substantial amounts of preparations, hence we could not help ourselves referring to later parts of the paper in our formulation of the main theorem.
\begin{theorem}\label{thm: Main Thm}
    Let $X$ be a primary Burniat surface with the canonical divisor $K_X$. Given $D \in \Pic X$, the Algorithm~\ref{Main Algorithm} computes the dimension of $H^p(D)$ for each $p=0,1,2$, and its time complexity is $O(d)$ where $d = (D. K_X)$.
\end{theorem}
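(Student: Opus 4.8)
The plan is to reduce the theorem to a subroutine that computes $h^0$ of a single line bundle on $X$, and then to bound the cost of that subroutine.

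First I would carry out the two routine reductions. Serre duality gives $h^2(D)=h^0(K_X-D)$, and Riemann--Roch gives $h^1(D)=h^0(D)+h^0(K_X-D)-\chi(D)$ with $\chi(D)=1+\tfrac12\,D\cdot(D-K_X)$, a quantity read off in $O(1)$ from the fixed rank-$4$ intersection form on $\Pic X\cong\ZZ^4\oplus(\ZZ/2)^6$. Hence it suffices to exhibit a subroutine returning $h^0(E)$ for an arbitrary $E\in\Pic X$ and to invoke it on $E=D$ and on $E=K_X-D$; since $|(K_X-D)\cdot K_X|$ and $|d|$ differ by at most $K_X^2$, an $O(d)$ bound for the subroutine yields the theorem.

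The subroutine I would organise as a peeling process, built on two structural facts about a primary Burniat surface $X$: $K_X$ is ample (there is no $(-2)$-curve), so $C\cdot K_X\geq1$ for every irreducible curve $C$; and the irreducible curves of negative self-intersection form an explicit finite set $\mathcal C$ --- arising from the reduced preimages under the bidouble cover $\pi\colon X\to Y$ of the curves of the Burniat configuration and of the $(-1)$-curves of the del Pezzo base $Y$, with known geometric genera --- while every irreducible curve outside $\mathcal C$ is nef. (This is where the analyses of \cite{AlexeevOrlov:Burniat,Alexeev:Burniat} enter.) Given $E$: if $E\cdot K_X<0$ return $0$; if $E=0$ return $1$; otherwise search for $C\in\mathcal C$ with $E\cdot C<0$. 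If such a $C$ exists then $C$ is a fixed component of $|E|$ whenever $E$ is effective (an effective member of $|E|$ must contain $C$, or else $E\cdot C\geq0$), so $h^0(E)=h^0(E-C)$ --- and this also holds when $E$ is not effective, since then $h^0(E-C)\leq h^0(E)=0$ --- and we recurse on $E-C$, whose intersection with $K_X$ has dropped by $C\cdot K_X\geq1$. If no such $C$ exists then $E$ is nef once it is effective, and we reach the base case, where I would compute $h^0(E)$ (and decide effectivity) directly by descending through the ambient structures: the bidouble cover onto the toric del Pezzo $Y$, on which line-bundle cohomology is combinatorial, and the three conic-bundle structures of $Y$, which pull back to fibrations $X\to\P^1$ along which a nef class can be read off fibrewise. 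The complexity bound is then immediate: each non-base step strictly lowers $E\cdot K_X$ by at least $1$ and costs $O(1)$ (a lookup in the fixed set $\mathcal C$ plus bounded arithmetic), the base case costs $O(1)$, so the recursion has depth $O(d)$ and the algorithm runs in $O(d)$ time.

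The step I expect to be the genuine obstacle is the base case --- which is exactly why the result takes the form of an algorithm rather than one formula. One must show that for a nef class $E$ the three numbers $h^0(E),h^1(E),h^2(E)$ are governed by a description that is combinatorial but only piecewise closed, capturing in particular the possibly nonzero $h^1(E)$ that arises when $E-K_X$ is not nef; delimiting the finitely many regions on which each closed expression holds, compatibly with the peeling, is the real work. A secondary point to check with care is that the peeling never needs the connecting homomorphism of a restriction sequence $0\to\mathcal{O}_X(E-C)\to\mathcal{O}_X(E)\to\mathcal{O}_C(E)\to0$ --- only the fixed-component identity $h^0(E)=h^0(E-C)$ --- which is precisely why the recursion is built around $h^0$, with $h^1$ and $h^2$ recovered at the very end from Serre duality and Riemann--Roch.
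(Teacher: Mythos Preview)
Your outline matches the paper's skeleton: reduce to $h^0$ via Serre duality and Riemann--Roch, peel off the six negative elliptic curves $\sZ_0,\sZ_3$ until a nef class is reached, then handle the nef base case and recover $h^1,h^2$ at the end. Two points where your proposal and the paper diverge deserve comment.

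First, the paper's peeling (Proposition~\ref{prop: trimming lemma}) is slightly stronger than yours: it also strips off $\sZ_i$ when $(E.\sZ_i)=0$ and $\mathcal O_{\sZ_i}(E)$ is a nontrivial degree-$0$ bundle. This extra case is what makes the classification of terminal (``reduced'') forms finite and is used repeatedly in the base-case analysis; without it the boundary classes proliferate.

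Second, and more substantially, your base case is a genuine gap. You propose to compute $h^0(E)$ for nef $E$ ``by descending through the bidouble cover'' and the three conic pencils, but a line bundle $\mathcal O_X(E)$ with $E$ not $2$-divisible is not a pullback from the del Pezzo $Y$, so $\pi_*\mathcal O_X(E)$ carries no $(\ZZ/2)^2$-action and does not split into character eigenbundles; there is no off-the-shelf combinatorial formula here. The paper does \emph{not} take this route. Instead it (i)~invokes Kawamata--Viehweg when $D$ is ample with $(D-K_X)^2>0$, (ii)~for strictly nef $D$ with $d\geq 9$ proves a secondary reduction $h^0(D)=h^0(D-A_0)+1$ (Corollary~\ref{cor: reduction of nef and 64-eff}) and loops back into the peeling, and (iii)~for the finitely many residual ``boundary'' numerical classes---those in Propositions~\ref{prop: exotic classes (nef but not 64-eff)}, \ref{prop: exotic classes: ample and 64-eff}, \ref{prop: Nef 64-eff, exceptional cases}, \ref{prop: small degree classification}---computes $h^0$ by explicit case-by-case work (Propositions~\ref{prop: [6;000]}--\ref{prop: h^0 of [2l;00l]+[KX]}), crucially using the determination of $h^0(K_X+\tau)$ for all torsion $\tau$ (Proposition~\ref{prop: flexible torsions}). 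You correctly identify this as the real obstacle, but the mechanism you suggest for overcoming it does not work as stated; the paper's substitute is the lengthy casework of Section~\ref{sec: Main Algorithm}, and the $O(d)$ bound survives because step~(ii) again decreases $d$ by $1$.
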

However, as substantial preparation is required, a precise formulation of the algorithm is postponed to Section~\ref{sec: Main Algorithm}.

Our original motivation is to see whether $X$ supports an Ulrich line bundle. For a smooth projective variety $(Z, \mathcal O_Z(1))$, an Ulrich bundle over $Z$ is a locally free sheaf $\mathcal E$ satisfying
\[
    H^p( Z, \mathcal E \otimes \mathcal O_Z(-i))=0,\ \text{for each } p\text{ and }i=1,2,\dots,\dim Z.
\]

We refer the reader to the foundational work \cite{EisenbudSchreyer:Urlich}, which introduced Ulrich modules into algebraic geometry. For a concise and accessible introduction to Ulrich bundles, see \cite{Beauville:UlrichIntro}.

In \cite{Casnati:SpecialUlrich}, Casnati proved that if $(S,\mathcal O_S(1))$ is a projective surface with $p_g(S)=q(S)=0$ and $h^1(\mathcal O_S(1))=0$, then $S$ admits an Ulrich bundle of rank $2$. The theorem applies to the Burniat surface $X$, hence it is natural to ask whether $X$ admits an Ulrich line bundle. Our next theorem gives a negative answer to this question.

\begin{theorem}[see Theorem~\ref{thm: no Ulrich line bundles}]
    There exists no Ulrich line bundle over a primary Burniat surface with respect to any polarization.
\end{theorem}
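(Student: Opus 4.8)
\textit{Proof strategy.} The plan is to argue by contradiction. Suppose that $L \in \Pic X$ is an Ulrich line bundle with respect to an ample class $H$. Since $\dim X = 2$ this is equivalent to $H^p(L - H) = H^p(L - 2H) = 0$ for all $p$, and the first step is to read off the numerical constraints it imposes. Because $p_g = q = 0$ one has $\chi(\mathcal O_X) = 1$, so comparing the Hilbert polynomial $t \mapsto \chi(L + tH)$, computed by Riemann--Roch, with the Ulrich Hilbert polynomial $H^2\binom{t+2}{2}$ yields
\[
  H^2 = \chi(L), \qquad 2\,(L\cdot H) = 3H^2 + H\cdot K_X, \qquad L\cdot(L - K_X) = 2H^2 - 2 .
\]
Since an Ulrich sheaf is $0$-regular, $L$ is globally generated and hence nef, with $h^0(L) = H^2$ and $h^1(L) = h^2(L) = 0$; and Serre duality reinterprets the higher vanishing as the assertion that none of
\[
  L - H,\qquad L - 2H,\qquad K_X - L + H,\qquad K_X - L + 2H
\]
is effective, whereas $L$ is effective and $K_X - L$ is not. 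Throughout, one must keep in mind that effectivity is not a numerical condition, so the torsion of $\Pic X$ has to be tracked.

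The second step is to normalise the pair $(L,H)$ via the Hodge index theorem. Put $N := 2L - K_X - 3H \in \Pic X$, so that $L = \tfrac12(K_X + 3H + N)$. The linear relation above gives $N \cdot H = 0$, and substituting the other two relations into $N^2$ gives $N^2 = -(H^2 + 2)$; thus $N$ is a vector of fixed negative length in the negative definite lattice $H^\perp \subset \Pic X \otimes \QQ$. Under this substitution the four forbidden classes become $\tfrac12(K_X \pm H \pm N)$, the effective class $L$ becomes $\tfrac12(K_X + 3H + N)$, and the non-effective class $K_X - L$ becomes $\tfrac12(K_X - 3H - N)$. Observe that $(L - H) + (K_X - L + 2H) = K_X + H$, which \emph{is} effective --- Kodaira vanishing gives $h^0(K_X + H) = \chi(K_X + H) = \tfrac12(H^2 + H\cdot K_X) + 1$ --- while $(L - 2H) + (K_X - L + 2H) = K_X$ is not; so the problem becomes that of splitting a concrete effective class as a sum of two non-effective ones, compatibly with the negative vector $N$.

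The third and decisive step uses the explicit description of $\Pic X$, its intersection form, $K_X$, and the semigroup $\operatorname{Eff}(X)$ from \cite{Alexeev:Burniat}, together with Algorithm~\ref{Main Algorithm}. The low-degree cases are settled directly: $H^2 = 1$ forces $L \cong \mathcal O_X$, whence $H \cdot K_X = -3 < 0$, absurd; $H^2 = 2$ forces the image of $\varphi_{|L|}$ to be a curve and hence $L^2 = 0$, contradicting $L^2 = 2 + L\cdot K_X > 0$; and $H^2 = 3$ is excluded by a short argument involving the induced morphism $X \to \mathbb{P}^2$ and the Hodge index theorem. The remaining point, which I expect to be the main obstacle, is to show that for \emph{every} other polarization the system
\[
  N \in H^\perp,\quad N^2 = -(H^2+2),\quad \tfrac12(K_X \pm H \pm N) \notin \operatorname{Eff}(X),\quad \tfrac12(K_X + 3H + N) \in \operatorname{Eff}(X)
\]
has no solution, the difficulty being that $H$ ranges over the whole ample cone. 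I anticipate that, read along the ray through $H$ --- so that $L$ and its Ulrich dual $K_X + 3H - L$ lie in $\operatorname{Eff}(X)$ but $L - H$ and $L - 2H$ do not --- together with Hodge-index bounds such as $(H\cdot K_X)^2 \ge 6H^2$ and $(L\cdot K_X)^2 \ge 6L^2$, this forces $H \cdot K_X$ below an absolute constant: once $H$ is large relative to $K_X$, one of $L - 2H$, $K_X - L + 2H$ must dominate an effective pull-back from the del Pezzo base plus a non-negative combination of ramification curves, hence becomes effective. With $H \cdot K_X$ --- and therefore the numerical classes of $H$ and $N$ --- confined to a finite list, each carrying only finitely many torsion twists, Algorithm~\ref{Main Algorithm} computes the relevant cohomology and produces, in every remaining case, an effective representative of one of $L - H$, $L - 2H$, $K_X - L + H$, $K_X - L + 2H$, contradicting that $L$ is Ulrich.
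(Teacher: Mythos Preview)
Your set-up is correct and the Hodge-index normalisation via $N = 2L - K_X - 3H$ is tidy, but there is a genuine gap at the decisive step: the reduction to finitely many polarizations. You yourself flag it (``I anticipate that \ldots\ this forces $H\cdot K_X$ below an absolute constant''), and the justification you offer --- that for large $H$ one of $L-2H$, $K_X-L+2H$ must become effective --- is not substantiated. These two classes sum to $K_X$, which is \emph{not} effective since $p_g=0$, so there is no a~priori reason either summand should become effective as $H$ grows; and on $X$ effectivity is not a numerical condition, so even Proposition~\ref{prop: numerical trivial class has e>1} (which only says the \emph{numerical class} of $K_X-L+2H$ contains an effective member once its $K_X$-degree is $\ge 1$) does not touch the specific torsion twist the Ulrich condition singles out. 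Without this finiteness your proposed endgame --- a finite case-check via Algorithm~\ref{Main Algorithm} --- cannot begin, and the argument is incomplete.

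The paper's proof is organised quite differently and never bounds $H$. The key structural input is Corollary~\ref{cor: H-trivial divisors with d>=7}, built on Propositions~\ref{prop: numerical trivial class has e>1} and~\ref{prop: nefness of H-trivial D with d>=6} together with Proposition~\ref{prop: exotic classes (nef but not 64-eff)}: any cohomologically trivial divisor $D$ with $(D\cdot K_X)\ge 7$ is automatically nef with $1\le\mathsf e([D])\le 63$, so up to symmetry $[D]$ lies in one of three explicit one-parameter families, each of which has at least two of its coordinates $a,b,c$ in $\{0,1\}$. Since any ample and base-point-free $H$ satisfies $(H\cdot K_X)\ge 12$ and $(H\cdot\mathsf Z_i)\ge 2$ for all $\mathsf Z\in\{A,B,C\}$, $i\in\{0,3\}$ (Proposition~\ref{prop: ample and bpf divisor}), one may assume after the Serre-dual swap that $d(D)\ge 7$; then $D$ being only strictly nef forces $d(D-H)\le 5$ via Proposition~\ref{prop: nefness of H-trivial D with d>=6}, and a short comparison of the two three-family lists (for $D$ and for $K_X-D+H$, or a direct Riemann--Roch estimate when $0\le d(D-H)\le 5$) yields a contradiction uniformly in the parameter $\ell$. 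The moral is that the right object to constrain is not the polarization $H$ but the cohomologically trivial twist $L-H$ itself; once you have Corollary~\ref{cor: H-trivial divisors with d>=7} the argument closes without any enumeration of polarizations.
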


We also present an Ulrich bundle of rank $2$ over $(X, \mathcal O_X(3K_X))$ which cannot be obtained by the method of Casnati\,(see Theorem~\ref{thm: Ulrich of rank 2} and Remark~\ref{rmk: why differ from Casnati's}).

\medskip
\subsection*{Notations and Conventions}\ 
    \begin{enumerate}
        \item Throughout the paper, $X$ is a fixed primary Burniat surface over $\CC$.
        \item Unless stated otherwise, a divisor $D$ on $X$ will always refer to the linear equivalence class that contains $D$. Accordingly, if $D_1$ and $D_2$ are divisors, then $D_1 = D_2$ implies that $D_1$ and $D_2$ are linearly equivalent.

        \item Let $\sim_{\sf num}$ be the numerical equivalence relation on $\Pic X$. The corresponding equivalence class of $D$ will be denoted by $[D]$. 

        \item The number of effective divisors in $[D]$ will be referred to as the \emph{{\sf e}-number $\e([D])$ of $[D]$}.
        
    \end{enumerate}

%:
\bigskip
\section{Burniat configuration}\label{sec: Burniat config}
To fix notation, we summarize \cite[Section 2]{AlexeevOrlov:Burniat} on the configuration of curves on a Burniat surface. Let $\bar X$ be a del Pezzo surface of degree $6$. %We describe the curves in $\bar X$ over which the branched $(\ZZ/2)^2$-cover yields a surface $X$ of general type with $K_X^2=6$.
The surface $\bar X$ contains a hexagon of $(-1)$-curves, denoted cyclically by $\bar A_0$, $\bar C_3$, $\bar B_0$, $\bar A_3$, $\bar C_0$, $\bar B_3$. The sum of two consecutive edges forms a pencil of rational curves in $\bar X$. We choose two general members in each pencil:
\[
    \bar A_1,\bar A_2 \in \bigl\lvert \bar C_0 + \bar A_3 \bigr\rvert,\qquad
    \bar B_1,\bar B_2 \in \bigl\lvert \bar A_0 + \bar B_3 \bigr\rvert,\qquad
    \bar C_1,\bar C_2 \in \bigl\lvert \bar B_0 + \bar C_3 \bigr\rvert.
\]
Let $\bar R = \sum_{\sZ \in \{A,B,C\}}\sum_{i=0}^3 \bar \sZ_i$. It is straightforward to check that for each $\sZ \in \{A,B,C\}$, $2 L_{\sZ} := \bar R - \sum_{i=0}^3 \bar \sZ_i$ is $2$-divisible in $\Pic \bar X$. We define an $\mathcal O_{\bar X}$-algebra
\[
    \mathcal A := \mathcal O_{\bar X} \oplus \mathcal O_{\bar X}(-L_A) \oplus \mathcal O_{\bar X}(-L_B) \oplus \mathcal O_{\bar X}(-L_C),
\]
where the multiplication structure is given by the section corresponding to $\sum_{i=0}^3 \bar C_i \in \bigl\lvert L_A+L_B - L_C \bigr\rvert$, etc. The Burniat surface $X$ is defined as $X := \Spec_{\bar X} \mathcal A$. The structure morphism $\pi \colon X \to \bar X$ is the branched $(\ZZ/2)^2$-cover.
\begin{figure}[h!]
    \centering
    \begin{tikzpicture}
%				\draw[help lines] (-7,-4) grid (7,4);\coordmark{0,0}
        \pgfmathsetmacro{\hexradius}{2.5}
        \foreach \i in {0,1,2,3,4,5}{
            \coordinate (v\i) at (180-60*\i:\hexradius);
        }
        \foreach \i/\s in {0/A_0,1/C_3,2/B_0,3/A_3,4/C_0,5/B_3}{
            \pgfmathparse{Mod(\i+1,6)}	\pgfmathtruncatemacro{\j}{\pgfmathresult}
            \draw[-] (v\i) -- (v\j);
            \draw node (\s) at ( $(0,0)!1.2! ($(v\i)!0.5!(v\j)$) $ ) {$\s$};
        }
        \foreach \i/\s in {0/C,1/B,2/A}{
            \pgfmathparse{Mod(\i+1,6)}	\pgfmathtruncatemacro{\ni}{\pgfmathresult}
            \pgfmathparse{Mod(\i+3,6)}	\pgfmathtruncatemacro{\j}{\pgfmathresult}
            \pgfmathparse{Mod(\j+1,6)}	\pgfmathtruncatemacro{\nj}{\pgfmathresult}
            
            \pgfmathsetmacro{\a}{0.33}\pgfmathsetmacro{\b}{1-\a}
            \coordinate (w\i) at ( $(v\i)!\a!(v\ni) $ );
            \coordinate (w\j) at ( $(v\j)!\b!(v\nj) $ );
            \coordinate (x\i) at ( $(v\i)!\b!(v\ni) $ );
            \coordinate (x\j) at ( $(v\j)!\a!(v\nj) $ );

            \pgfmathparse{Mod(\i,2)+1}	\pgfmathtruncatemacro{\k}{\pgfmathresult}
            \pgfmathparse{Mod(\i+1,2)+1}	\pgfmathtruncatemacro{\l}{\pgfmathresult}
            \draw[-] (w\i)--(w\j);
            \draw node[shape=circle, fill=white, inner sep=1.5pt] at ( $(w\i)!0.175!(w\j)$ ) {$\s_\k$};
            \draw[-] (x\i)--(x\j);
            \draw node[shape=circle, fill=white, inner sep=1.5pt] at ( $(x\i)!0.825!(x\j)$ ) {$\s_\l$};
        }
    \end{tikzpicture}
    \caption{Burniat configuration}\label{fig: BurniatConfig}
\end{figure}
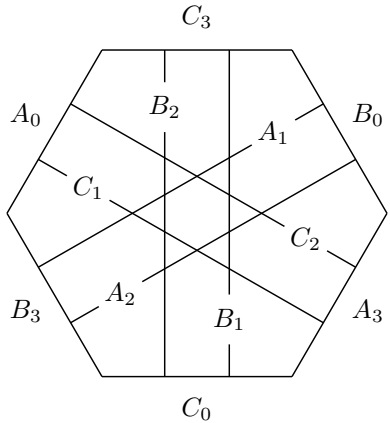
        
Each branch curve has ramification index $2$, thus $\pi^* \bar \sZ_i = 2 \sZ_i$ where $\sZ \in \{A,B,C\}$, $i=0,1,2,3$, and $\sZ_i$ is the ramification curve over $\bar \sZ_i$.\,(See Figure~\ref{fig: BurniatConfig}) For $i=0,3$, $\sZ_i \to \bar \sZ_i$ is a double cover branched at four points; for instance, $A_0$ is branched over the intersections with $B_3, C_1, C_2, C_3$. Hence $\sZ_i$ is an elliptic curve, and since $(2 \sZ_i)^2 = (\pi^* \bar \sZ_i)^2 = -4$, we have $\sZ_i^2 = -1$. If $i=1,2$, then $\sZ_i$ is a double cover branched at six points. So, $g(\sZ_i)=2$ and $\sZ_i^2 = 0$.

To equip $A_0$ with a group structure, we set the identity to be $A_0^{00} := B_3 \cap A_0$. This provides a group isomorphism
\[
    e_{A_0} \colon A_0 \to \Pic^0 A_0,\qquad P \mapsto P-A_0^{00}.
\]
Since 
\[
    2(B_3+C_0),\ 2C_1,\ 2C_2,\ 2(B_0+C_3) \in \bigl\lvert \pi^* (\bar B_3 + \bar C_0 ) \bigr\rvert,
\]
we see that $C_1\cap A_0,\, C_2\cap A_0,\, C_3 \cap A_0$ are $2$-torsion points in $A_0$. We denote those points as follows:
\[
    A_0^{01}:= C_1 \cap A_0,\quad A_0^{11}:= C_2\cap A_0,\quad A_0^{10}:= C_3 \cap A_0.
\]
In this way, we get a natural identification $ (\epsilon_1, \epsilon_2) \mapsto A_0^{\epsilon_1\epsilon_2}$ between $(\ZZ/2)^2$ and $A_0[2]$, the group of $2$-torsion points.

By setting $C_3^{00} := A_0 \cap C_3$, $B_0^{00} := C_3 \cap B_0$, and so on, we get the corresponding isomorphisms $e_{\sZ_i}$ and $(\ZZ/2)^2 \xto\sim \sZ_i[2]$ for $i=0,3$.
%:
\begin{theorem}[see {\cite[Theorem~1]{Alexeev:Burniat} and \cite[Theorem~4.1]{AlexeevOrlov:Burniat}}]\ \label{thm: PicX and EffX}
    \begin{enumerate}
        \item  The map $\varphi \colon \Pic X \to \ZZ \oplus \Pic A_0 \oplus \Pic B_0 \oplus \Pic C_0$ 
        \[
            D \longmapsto \Bigl( (D. K_X) ,\ D\big\vert_{A_0},\ D\big\vert_{B_0},\ D\big\vert_{C_0} \Bigr) 
        \]
        is injective and the image is precisely the subgroup consisting of $(d,\alpha,\beta,\gamma)$ subject to the following conditions:
        \begin{enumerate}
            \item $d + \deg\alpha + \deg\beta + \deg \gamma$ is divisible by $3$;
            \item $ \alpha - (\deg \alpha) A_0^{00} $ is $2$-torsion.
        \end{enumerate}
        Furthermore, the curves $\sZ_i$ with $\sZ \in \{A,B,C\}$ and $i\in \{0,1,2,3\}$ generate $\Pic X$.
        \item The semigroup of effective divisors is generated by $\{\sZ_i\}$. In other words, every effective divisor is linearly equivalent to $\sum_{\sZ \in \{A,B,C\}}\sum_{i=0}^3 z_i\sZ_i$ for some nonnegative integers $\{z_i\}$.
    \end{enumerate}
\end{theorem}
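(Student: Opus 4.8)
The plan is to reduce everything to $\ZZ$-linear algebra using the cover $\pi\colon X\to\bar X$ and the explicit geometry of the del Pezzo surface $\bar X$. Since $\bar X$ is a smooth rational surface, $\Pic\bar X$ is free of rank $4$, generated by the hexagon $\bar A_0,\bar C_3,\bar B_0,\bar A_3,\bar C_0,\bar B_3$ of $(-1)$-curves with a completely explicit intersection form (two hexagon curves meet, transversally at one point, exactly when they are adjacent), and its cone of effective divisors is spanned by those six curves. On $X$ the exponential sequence and $p_g=q=0$ give $\Pic X\cong H^2(X,\ZZ)$, whose free part has rank $b_2(X)=10-K_X^2=4$ and whose torsion is $H_1(X,\ZZ)\cong(\ZZ/2)^6$ (known for primary Burniat surfaces). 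The intersection numbers on $X$ are then all forced: from $\pi^*\bar\sZ_i=2\sZ_i$ one gets $4(\sZ_i\cdot\sZ_j)=(\deg\pi)(\bar\sZ_i\cdot\bar\sZ_j)$, while adjunction with the stated genera gives $\sZ_i\cdot K_X$ ($=1$ if $i=0,3$; $=2$ if $i=1,2$) and, for the elliptic curves, $\sZ_i|_{\sZ_i}=-K_X|_{\sZ_i}$, which one further pins down on the group $\sZ_i$ using $K_X=\pi^*K_{\bar X}+\sum_{\sZ,j}\sZ_j$ and the branch-point data already recorded.

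\textbf{Membership in the subgroup $G$ defined by (a), (b).} Conditions (a) and (b) cut out a subgroup $G\subseteq\ZZ\oplus\Pic A_0\oplus\Pic B_0\oplus\Pic C_0$, so $\image\varphi\subseteq G$ reduces to checking $\varphi(\sZ_i)\in G$ for the twelve generators $\sZ_i$, a finite computation. For (b) this is immediate when $\sZ_i\neq A_0$, since $\sZ_i\cap A_0$ is empty or one of the four points $A_0^{\epsilon_1\epsilon_2}$ by the intersection pattern, and these become $2$-torsion after subtracting $A_0^{00}$; the remaining case uses $A_0|_{A_0}=-K_X|_{A_0}$ together with the identification $e_{A_0}$ to see $K_X|_{A_0}-A_0^{00}$ is $2$-torsion. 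For (a) one checks $(\sZ_i\cdot K_X)+\sZ_i\cdot A_0+\sZ_i\cdot B_0+\sZ_i\cdot C_0\equiv0\pmod 3$ (e.g.\ $1-1+0+0$ for $\sZ_i=A_0$, and $2+0+1+0$ for $\sZ_i=A_1$). I would also note that $G$ is abstractly $\ZZ^4\oplus(\ZZ/2)^6$: condition (a) carves an index-$3$ subgroup out of the rank-$4$ lattice of degree vectors, and condition (b) lets the ``fractional parts'' of $\alpha,\beta,\gamma$ range over all of $A_0[2]\times B_0[2]\times C_0[2]$.

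\textbf{$\varphi$ is an isomorphism onto $G$.} On free parts the composite of $\varphi$ with the degree projection is $D\mapsto\bigl((D.K_X),\,D\cdot A_0,\,D\cdot B_0,\,D\cdot C_0\bigr)$, which factors through $NS(X)$; the Gram matrix of $\{K_X,A_0,B_0,C_0\}$ has determinant $-9\neq0$, so these four classes are a $\QQ$-basis of $NS(X)$, the map is injective modulo torsion, and (since $NS(X)$ is unimodular, so these classes span an index-$3$ sublattice, while the target free part of $G$ also has index $3$ in $\ZZ^4$) a covolume count shows the map is onto the free part of $G$. On torsion I would exhibit six explicit $2$-torsion classes, forced to be $2$-torsion because the corresponding del Pezzo curves coincide in $\Pic\bar X$ (for instance $A_1-A_2$, $B_1-B_2$, $C_1-C_2$, $A_1-C_0-A_3$, $B_1-A_0-B_3$, $C_1-B_0-C_3$, or a suitable such set), and compute their images in $A_0[2]\times B_0[2]\times C_0[2]$ from the identifications in the excerpt; showing these six images are linearly independent simultaneously proves that $\varphi$ is injective on torsion, that the six classes generate $H_1(X,\ZZ)$, and that $\varphi$ is onto the torsion of $G$. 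Putting the two halves together shows $\varphi$ is an isomorphism onto $G$; and since the twelve $\sZ_i$ (together with those torsion classes, themselves $\ZZ$-combinations of the $\sZ_i$) map onto $G$, the $\sZ_i$ generate $\Pic X$.

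\textbf{The effective semigroup, and the main obstacle.} For part (2) the inclusion $\supseteq$ is trivial, and for $\subseteq$ I would induct on $(D.K_X)\ge0$ (permissible since $K_X$ is ample). If $D$ is effective and nonzero with $D\cdot\sZ_j<0$ for some ramification curve $\sZ_j$, then $\sZ_j$ is a fixed component of $|D|$, so $D-\sZ_j$ is effective with strictly smaller intersection with $K_X$ and induction applies; in the remaining case $D\cdot\sZ_j\ge0$ for all $j$, and I would push $D$ forward to $\bar X$, where the effective cone is the cone on the hexagon curves, lift a nonnegative representation along $\pi$, and absorb the resulting $(\ZZ/2)^6$-ambiguity into a nonnegative combination of the $\sZ_i$ using that each torsion class is a difference of effective sums $\sum z_i\sZ_i$. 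I expect the real work to be in the torsion bookkeeping: verifying that the six chosen $2$-torsion classes are genuinely separated by restriction to $A_0,B_0,C_0$ needs the normal-bundle computations for both the elliptic curves $\sZ_0,\sZ_3$ and the genus-$2$ curves $\sZ_1,\sZ_2$, and in part (2) one must ensure the torsion correction can always be made nonnegative — delicate precisely because $\sZ_1,\sZ_2$ move in pencils rather than being rigid — and that the peeling induction terminates.
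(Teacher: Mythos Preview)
The paper does not prove this theorem at all: it is quoted verbatim from \cite[Theorem~4.1]{AlexeevOrlov:Burniat} and \cite[Theorem~1]{Alexeev:Burniat} and used as a black box throughout. So there is no in-paper argument to compare yours against; what you have written is an independent proof sketch of a result the author imports from the literature.

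For what it is worth, your outline for part~(1) is essentially the strategy of Alexeev--Orlov: verify $\varphi$ lands in the subgroup $G$ on generators, then match orders by computing $|G| \cong \ZZ^4 \oplus (\ZZ/2)^6 \cong \Pic X$ (the latter via $p_g=q=0$ and the known $H_1$), and check injectivity by an index/covolume count on the free part (your Gram determinant $-9$ is correct) together with an explicit basis of six $2$-torsion classes separated by restriction to $A_0,B_0,C_0$. That part is sound.

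For part~(2) your induction is the right shape but the endgame you flag as ``delicate'' is exactly where Alexeev's argument does real work, and your sketch is not yet a proof. Pushing an effective nef $D$ down to $\bar X$ and writing $\pi_*D$ in the hexagon cone only controls $D$ modulo $\pi^*\Pic\bar X$, i.e.\ modulo the index-$2^6$ subgroup generated by the $2\sZ_i$; you then have to show that \emph{every} coset representative that actually arises from an effective $D$ can be corrected to a nonnegative $\sum z_i\sZ_i$. Saying ``each torsion class is a difference of effective sums'' is not enough, since adding such a difference can push some $z_i$ negative. Alexeev handles this by a careful case analysis (essentially the $8$-way branching you see in the paper's own ``effectiveness test'' in Section~\ref{sec: k-effective test}), not by a soft cone argument. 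If you want a self-contained proof you will need to carry that analysis out; otherwise cite \cite{Alexeev:Burniat} as the paper does.
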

%:
Given $D \in \Pic X$, the $\Pic A_0$-component of $\varphi(D)$ can be rewritten as the pair $(a_0, \tau) \in \ZZ \oplus A_0[2]$, where $a_0 = (D. A_0)$ and $\tau = e_{A_0}^{-1}\bigl( D\big\vert_{A_0} - a_0 A_0^{00} \bigr)$. Using the identification $(\ZZ/2)^2 \xto\sim A_0[2]$ described above, we may regard $\Pic A_0$-component as an element of $\ZZ \oplus (\ZZ/2)^2$. By doing this for $B_0$ and $C_0$, we may identify $\Pic X$ as a subgroup of index $3$ in $\ZZ \oplus (\ZZ \oplus (\ZZ/2)^2)^3$.
%:
\begin{definition}\label{def: SymCoord}
    Given a divisor $D \in \Pic X$, we call $\varphi(D) \in \ZZ \oplus (\ZZ \oplus (\ZZ/2)^2)^3$ the \emph{truncated symmetric coordinates} of $D$. The \emph{symmetric coordinates} of $D$ are augmented data $\varphi(D) \in \ZZ \oplus (\ZZ \oplus (\ZZ/2)^2)^3 \oplus (\ZZ \oplus (\ZZ/2)^2)^3$ whose additional coordinates record $( D\big\vert_{A_3}, D\big\vert_{B_3}, D\big\vert_{C_3} )$.
\end{definition}
%:
Compared with the truncated ones, the symmetric coordinates do not provide any extra information about $D$\,(cf. \cite[Lemma~3]{AlexeevOrlov:Burniat}). Thus, the use of symmetric coordinates is merely a matter of convenience. Table~\ref{table: symCoords} displays the symmetric coordinates of $\{\sZ_i: \sZ\in\{A,B,C\},\ i=0,1,2,3\}$ and $K_X$.
%:
\begin{table}[h!]
\[
    \begin{array}{ r@{\quad}| c | r r r@{\ \ \,} | r r r }
    & (D.K_X) & \multicolumn{1}{c@{}}{\Pic A_0} & \multicolumn{1}{c@{}}{\Pic B_0} & \multicolumn{1}{c@{}|}{\Pic C_0\ } & \multicolumn{1}{c@{}}{\Pic A_3} & \multicolumn{1}{c@{}}{\Pic B_3} & \multicolumn{1}{c@{}}{\Pic C_3} \\ \hline
%:
    A_0 & 1 & -1\ 00 & 0\ 00 & 0\ 00 & 0\ 00 & 1\ 10 & 1\ 00 \\
    B_0 & 1 & 0\ 00 & -1\ 00 & 0\ 00 & 1\ 00 & 0\ 00 & 1\ 10 \\
    C_0 & 1 & 0\ 00 & 0\ 00 & -1\ 00 & 1\ 10 & 1\ 00 & 0\ 00 \\
%:
    A_3 & 1 & 0\ 00 & 1\ 10 & 1\ 00 & -1\ 00 & 0\ 00 & 0\ 00 \\
    B_3 & 1 & 1\ 00 & 0\ 00 & 1\ 10 & 0\ 00 & -1\ 00 & 0\ 00 \\
    C_3 & 1 & 1\ 10 & 1\ 00 & 0\ 00 & 0\ 00 & 0\ 00 & -1\ 00 \\ \hline
%:
    A_1 & 2 & 0\ 00 & 1\ 01 & 0\ 00 & 0\ 00 & 1\ 11 & 0\ 00 \\
    A_2 & 2 & 0\ 00 & 1\ 11 & 0\ 00 & 0\ 00 & 1\ 01 & 0\ 00 \\
    B_1 & 2 & 0\ 00 & 0\ 00 & 1\ 01 & 0\ 00 & 0\ 00 & 1\ 11 \\
    B_2 & 2 & 0\ 00 & 0\ 00 & 1\ 11 & 0\ 00 & 0\ 00 & 1\ 01 \\
    C_1 & 2 & 1\ 01 & 0\ 00 & 0\ 00 & 1\ 11 & 0\ 00 & 0\ 00 \\
    C_2 & 2 & 1\ 11 & 0\ 00 & 0\ 00 & 1\ 01 & 0\ 00 & 0\ 00 \\ \hline
%:
    K_X & 6 & 1\ 00 & 1\ 00 & 1\ 00 & 1\ 00 & 1\ 00 & 1\ 00
    \end{array}
\]
\caption{Symmetric coordinates of $\{\sZ_i\}$ and $K_X$}\label{table: symCoords}
\end{table}
%:
\begin{remark}\label{rmk: Some linEquiv}
    Using (truncated) symmetric coordinates, we may compare divisors and determine whether they are linearly equivalent. We list several useful linear equivalences.
    \begin{enumerate}
        \item $2(C_0+A_3) = 2A_1 = 2A_2 = 2(C_3+A_0)$
        \item $(A_0+B_3) + A_1 + B_2 = (B_0+A_3) + A_2 + B_1$
        \item $(A_0+B_3) + A_1 + B_1 = (B_0+A_3) + A_2 + B_2$
    \end{enumerate}
    The first one is obvious as they all belong to $\lvert \pi^*\bar C_1 \rvert$. The second one is less obvious, but can be verified directly by the symmetric coordinates. The last one is obtained by adding $B_1-B_2 = B_2-B_1$ to both sides of (2).
\end{remark}
%:
\begin{remark}\label{rmk: symmetry}
    There are obvious symmetries between the generators $\{ \sZ_i\}$ of $\Pic X$; one is the cyclic permutations $(A \mapsto B \mapsto C \mapsto A)$ of letters, and the other is the flip $(i \leftrightarrow 3-i)$ of indices. We emphasize that it is not allowed to swap the letters. For instance, we will see that $h^2(A_0-B_0) \neq h^2(B_0-A_0)$.
\end{remark}
%:	
\bigskip
\section{Effective divisors and numerical equivalences}\label{sec: k-effective test}
\subsection{Effectiveness test}\label{subsec: effectiveness test}
By virtue of Theorem~\ref{thm: PicX and EffX} and Table~\ref{table: symCoords}, we may determine whether an arbitrary divisor (given in terms of symmetric coordinates) is effective or not. Let $D = \sum_{\sZ,i} z_i \sZ_i$.

\begin{description}		
    \item[Step~1] We first assume that $z_i=0$ for $i=1,2$ and $z_3 \equiv 0\ (\mod 2)$. In this case the truncated symmetric coordinates of $D$ have the following form:
\[
    \begin{array}{ c | r r r }
        \Bigl( d & \bigl(a\ 00) & \bigl(b\ 00) & \bigl(c\ 00)\Bigr)
    \end{array},
\]
where
\begin{equation}\label{eq: special effectiveness}
    \left\{
        \begin{array}{r@{}l}
            d&{}= a_0+b_0+c_0+a_3+b_3+c_3 \\
            a&{}= b_3+c_3-a_0 \\
            b&{}= c_3+a_3-b_0 \\
            c&{}= a_3+b_3-c_0
        \end{array}
    \right. \ \iff \ 
    \left\{
        \begin{array}{r@{}l@{}l}
            \ell:={}& \frac13(d+a+b+c)& \\
            a_0 ={}& b_3+c_3 - a& \\
            b_0 ={}& -b_3 -b + \ell& \\
            c_0 ={}& -c_3 - c + \ell& \\
            a_3 ={}& -b_3 -c_3 + \ell&.
        \end{array}
    \right.
\end{equation}
Hence, $D$ is effective if and only if there exist $b_3,c_3 \in 2\,{\ZZ}_{\geq 0}$ such that $a_0,b_0,c_0,a_3 \in \ZZ_{\geq 0}$.
%:
\item[Step~2] Now we consider the general case. By Remark~\ref{rmk: Some linEquiv}, we may assume $z_1, z_2 \in \{0,1\}$ for $z \in \{a,b,c\}$. In Table~\ref{table: symCoords}, we observe that only $C_1,C_2,C_3$ affect $A_0[2]$-component in the symmetric coordinates. So, $A_0[2]$-component gives two possibilities on the pair $(c_1,c_2)$.
\[
    \begin{array}{c|c }
        A_0[2] & \text{Candidate subdivisors of }D  \\ \hline
        00 & 0\text{\ or\ } C_1+C_2+C_3 \\
        01 & C_1 \text{\ or\ }C_2+C_3 \\
        11 & C_2 \text{\ or\ }C_1+C_3 \\
        10 & C_3 \text{\ or\ }C_1+C_2 \\
    \end{array}
\]
Considering $B_0[2]$ and $C_0[2]$-components, we obtain $2\times 2\times 2 = 8$ candidates. We subtract each candidate from $D$, and return to {\sf Step 1}. If none of the candidates yields an effective divisor, then $D$ is not effective. Otherwise, $D$ is effective.
%:
\end{description}
%:
\medskip
\begin{example}
    Let $D$ be the divisor whose truncated symmetric coordinates are
    \[
        \begin{array}{ c | r r r }
            \Bigl( 7 & \bigl(1\ 10) & \bigl(2\ 01) & \bigl(2\ 11)\Bigr)
        \end{array}.
    \]
    Since $A_0[2]\oplus B_0[2] \oplus C_0[2]$-components are $(10\ 01\ 11)$, we may choose, for instance, $C_3 + A_1 + B_2$ as a candidate subdivisor of $D$. Then, $D - (C_3+A_1+B_2)$ has symmetric coordinates
    \[
        \begin{array}{ c | r r r }
            \Bigl( 2 & \bigl(0\ 00) & \bigl(0\ 00) & \bigl(1\ 00)\Bigr)
        \end{array}.
    \]
    Substituting $(d,a,b,c)=(2,0,0,1)$ to (\ref{eq: special effectiveness}), we find that no desirable $\{b_3,c_3\}$ exists, thus $D-(C_3+A_1+B_2)$ is not effective. However, if we replace $B_2$ by $B_1+B_3$, then 
    \[
        D - (C_3+A_1+B_1+B_3) = \bigl( 1\ \mid\ (-1\ 00)\ (0\ 00)\ (0\ 00) \bigr),
    \]
    thus $D-(C_3+A_1+B_1+B_3) = A_0$, and $D$ is effective. Another candidate $C_3+(A_2+A_3)+B_2$ also shows that $D$ is effective.
\end{example}
%:
%:
\medskip
\subsection{Numerical equivalences and ${\sf e}$-numbers}
The effectiveness test shows its advantage when it is implemented in a computer algebra system. From a theorist's perspective, however, the test is far less useful, as it does not provide a uniform criterion (such as inequalities in terms coordinates) for effectiveness. It appears challenging to obtain a direct criterion for effectiveness using symmetric coordinates. To get a handy criterion, we study the numerical equivalences instead.
%:
\begin{definition}\label{def: numClass and k-eff}
    Let $D$ be a divisor with the truncated symmetric coordinates
    \[
        \begin{array}{ c | r r r }
            \Bigl( d & \bigl(a\ \text{\textasteriskcentered\textasteriskcentered}) & \bigl(b\ \text{\textasteriskcentered\textasteriskcentered}) & \bigl(c\ \text{\textasteriskcentered\textasteriskcentered}) \Bigr)
        \end{array}.
    \]
    The numerical equivalence class of $D$ is denoted by $[d;a,b,c]$.
\end{definition}
%:
Recall that the $\sf e$-number $\e([D])$ is the number of effective divisors in $[D]$. Since the torsion subgroup of $\Pic X$ has order $64$, the $\sf e$-number ranges from $0$ to $64$. This section focuses on characterizing when the $\sf e$-number is $64$.
\begin{lemma}
    Let $D$ be a non-principal divisor. If $D$ is effective, then so is $D+K_X$.
\end{lemma}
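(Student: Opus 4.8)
The plan is to reduce the statement, via the effectiveness test of Section~\ref{subsec: effectiveness test}, to a purely combinatorial verification on the integers appearing in \eqref{eq: special effectiveness}. First I would use Remark~\ref{rmk: Some linEquiv} and Step~2 of the effectiveness test to choose a representative of $D$ of the special form treated in Step~1: that is, write $D = E + F$ where $F$ is one of the $8$ candidates built from $\{C_1,C_2,C_3\}$, $\{A_1,A_2,A_3\}$, $\{B_1,B_2,B_3\}$ according to the torsion components, and $E$ has truncated symmetric coordinates $\bigl(d\mid (a\ 00)(b\ 00)(c\ 00)\bigr)$. Since $D$ is effective, at least one such decomposition has $E$ effective, so by \eqref{eq: special effectiveness} there exist $b_3, c_3 \in 2\ZZ_{\ge 0}$ with $a_0, b_0, c_0, a_3 \in \ZZ_{\ge 0}$, where these six integers are the ones determined by $(d,a,b,c)$ through that display.

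Next I would observe that adding $K_X$ commutes with this decomposition: from Table~\ref{table: symCoords}, $K_X$ has truncated symmetric coordinates $\bigl(6\mid (1\ 00)(1\ 00)(1\ 00)\bigr)$, so $D + K_X = (E + K_X) + F$ with $E + K_X$ again of the special Step~1 form, its $(d,a,b,c)$ replaced by $(d+6,\,a+1,\,b+1,\,c+1)$. Feeding this into the right-hand system of \eqref{eq: special effectiveness}, the quantity $\ell = \frac13(d+a+b+c)$ increases by $\frac13(6+1+1+1) = 3$, while $b_3, c_3$ may be kept unchanged; tracing through the formulas, $a_0 = b_3 + c_3 - a$ \emph{decreases} by $1$, whereas $b_0, c_0$ each increase by $\ell$'s increment minus $1$, i.e.\ by $2$, and $a_3$ increases by $3$. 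So the only obstruction to concluding that $E + K_X$ is effective is the possibility that the new value of $a_0$, namely $b_3 + c_3 - a - 1$, becomes negative — equivalently that $a_0 = b_3 + c_3 - a = 0$ in the original effective witness for $E$.

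The main obstacle is therefore precisely this boundary case $a_0 = 0$: I must show that when $E$ is effective but \emph{every} witness has $b_3 + c_3 - a = 0$, one can still certify $E + K_X$ (or some other Step~2 representative of $D + K_X$) effective, using the hypothesis that $D$ is non-principal. The idea is that $a_0 = (E\mathbin. A_0)$, and $a_0 = 0$ with $E$ effective forces strong constraints — either $E$ is supported away from $A_0$, or a component of $E$ equals $A_0$ itself — and in the former situation I can instead increase, say, $c_3$ by $2$ (which costs nothing since $c_3 \in 2\ZZ_{\ge 0}$ and we are adding, not subtracting) to absorb the drop in $a_0$; this is where non-principality is used, to guarantee that the adjusted coordinates still describe an honest effective divisor rather than running off the edge of the cone in all three cyclic directions simultaneously. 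I would handle the genuinely extremal configurations by appealing to Theorem~\ref{thm: PicX and EffX}(2): writing $D$ explicitly as $\sum z_i \sZ_i$ with $z_i \ge 0$ not all zero, the divisor $D + K_X = D + (A_0 + A_3 + B_0 + B_3 + C_0 + C_3)$ — using that $K_X$ is linearly equivalent to that sum of ramification curves — is manifestly a non-negative combination of the generators, hence effective. In fact this last observation short-circuits the whole argument: the cleanest proof is simply to note $K_X \sim \sum_{\sZ} (\sZ_0 + \sZ_3)$ from Table~\ref{table: symCoords} (or from $\pi^* K_{\bar X} + \bar R$ pulled back appropriately), so that an effective $D = \sum z_i \sZ_i$ yields $D + K_X = \sum (z_i + [i\in\{0,3\}])\,\sZ_i$, again effective by Theorem~\ref{thm: PicX and EffX}(2); the non-principal hypothesis is then not even needed for effectiveness of $D+K_X$ but only ensures $D + K_X \ne K_X$, i.e.\ rules out the trivial degenerate reading of the lemma. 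I would present the short argument as the proof and relegate the Step~1/Step~2 bookkeeping to a remark in case a reader wants the "coordinate" picture.
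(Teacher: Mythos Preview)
Your ``cleanest proof'' rests on the claim $K_X \sim \sum_{\sZ}(\sZ_0+\sZ_3)$, and this is false. Summing the $\Pic A_0$-columns of Table~\ref{table: symCoords} for $A_0,A_3,B_0,B_3,C_0,C_3$ gives $(-1+0+0+1+0+1\ ;\ 00+00+00+00+00+10)=(1\ 10)$, not the $(1\ 00)$ recorded for $K_X$; in fact the paper itself notes (in the proof of Proposition~\ref{prop: flexible torsions}) that $A_0+B_0+C_0+A_3+B_3+C_3 = K_X + (10\ 10\ 10)$. So your argument only shows that $D + K_X + \tau$ is effective for this particular nonzero torsion $\tau$, which is not what is claimed. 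Relatedly, your remark that non-principality ``is not even needed for effectiveness of $D+K_X$'' cannot be right: $p_g(X)=0$, so $K_X$ itself is \emph{not} effective, and the hypothesis is doing real work.

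Since you explicitly fall back on the short argument to handle the boundary case $a_0=0$ in your longer Step~1/Step~2 analysis, that analysis is also left incomplete. The paper avoids all of this by induction on $d=(D.K_X)$: for $d=1,2$ it exhibits explicit effective representatives such as $A_0+K_X = A_1+A_2+A_3+2B_0$ and $A_1+K_X = A_0+A_2+A_3+2(B_3+C_0)$ (checked via symmetric coordinates), and for $d\ge 3$ it peels off a generator $\sZ_i$ using Theorem~\ref{thm: PicX and EffX}(2). If you want to salvage your approach, you would need to find an honest effective representative of $K_X$ itself in terms of the $\sZ_i$---but none exists---or else find, for each generator $\sZ_i$, an effective representative of $\sZ_i+K_X$, which is exactly what the paper's base cases do.
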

\begin{proof}
    We proceed by induction on $d := (D. K_X)$.
    \begin{description}
        \item[Step 1] If $d=1$, then $D \in \{\sZ_i\}_{i=0,3}$. By comparing symmetric coordinates, we may see
        \[
            A_0 + K_X = A_1+A_2+A_3+2B_0.
        \]
        By symmetry\,(Remark~\ref{rmk: symmetry}), this completes the case $d=1$.
%:
        \item[Step 2] Assume $d=2$. If $D$ is reducible, then it is a sum of two curves in $\{\sZ_i\}_{i=0,3}$, hence we are done by {\sf Step 1}. If it is irreducible, then $D \in \{\sZ_i\}_{i=1,2}$. Then, by
        \[
            A_1 + K_X = A_0 + A_2 + A_3 + 2(B_3+C_0)
        \]
        and symmetry, $D+K_X$ is effective.
%:
        \item[Step 3] If $d \geq 3$, then by Theorem~\ref{thm: PicX and EffX}, $D-\sZ_i$ is effective for some $\sZ$ and $i$. By induction hypothesis, $(D-\sZ_i+K_X)$ is effective, thus $D = (D-\sZ_i+K_X)+\sZ_i$ is effective.\qedhere
%:
    \end{description}
\end{proof}
%:
Let $\tau \in \Tors \Pic X$ be a nonzero torsion. Since $\chi(\mathcal O_X) = 1$ and $h^0(\tau) = 0$, we have $h^2(\tau) = h^0(K_X+\tau) > 0$. Thus, the above lemma implies the following.

\begin{corollary}
    Assume $[D] \neq [0;0,0,0]$. If $\e([D])\geq 1$, then $\e([D+K_X])=64$.
\end{corollary}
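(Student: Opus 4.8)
The plan is to deduce this directly from the Lemma together with the fact, recorded just before the statement, that $K_X+\tau$ is effective for every nonzero torsion class $\tau$; so no new computation should be needed.

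First I would unwind what the conclusion asserts. Since $\Tors\Pic X$ has order $64$, a numerical class contains exactly $64$ linear-equivalence classes, and the $64$ classes in $[D+K_X]=[D_0+K_X]$ are precisely the $D_0+K_X+\tau$ with $\tau\in\Tors\Pic X$, where $D_0$ is any chosen representative of $[D]$. Using $\e([D])\ge 1$ I would take $D_0$ to be effective; the hypothesis $[D]\ne[0;0,0,0]$ then says exactly that $D_0$ is non-principal. Hence it suffices to show that $D_0+K_X+\tau$ is effective for every $\tau\in\Tors\Pic X$.

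Next I would split on whether $\tau$ vanishes. For $\tau\ne 0$, the computation preceding the statement gives $h^0(K_X+\tau)=h^2(\tau)\ge 1$, so $K_X+\tau$ is effective; since $D_0$ is effective as well, $D_0+K_X+\tau$ is a sum of two effective divisors and hence effective. For $\tau=0$ this argument breaks down, because $K_X$ itself is not effective ($p_g(X)=0$); here I would instead apply the Lemma to the non-principal effective divisor $D_0$ to conclude that $D_0+K_X$ is effective. Combining the two cases shows that all $64$ classes of $[D+K_X]$ are effective, i.e. $\e([D+K_X])=64$.

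The only delicate point is the $\tau=0$ case: this is the sole place where the hypothesis $[D]\ne[0;0,0,0]$ (equivalently, the non-principality of $D_0$) is used, and it is exactly what the Lemma is designed to supply. I therefore do not expect any real obstacle beyond keeping the two cases cleanly separated.
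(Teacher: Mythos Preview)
Your argument is correct and is exactly the proof the paper has in mind: it combines the Lemma (for the $\tau=0$ case) with the observation $h^0(K_X+\tau)>0$ for nonzero $\tau$ (for the remaining cases), which is precisely why the paper places that observation immediately before the corollary and says ``the above lemma implies the following.''
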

In fact, the converse of the previous corollary is true.
%:
\begin{proposition}\label{prop: 1-eff and 64-eff}
    Assume $[D] \neq [0;0,0,0]$. Then, $\e([D]) \geq 1$ if and only if $\e([D+K_X])=64$.
\end{proposition}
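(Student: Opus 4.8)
The forward implication is exactly the preceding Corollary applied to $D$, which is non-principal since $[D]\neq[0;0,0,0]$; so only the converse needs an argument, and I would prove its contrapositive: if $[D]\neq[0;0,0,0]$ and $\e([D])=0$, then $D+K_X+\tau_0$ is not effective for some torsion $\tau_0$, hence $\e([D+K_X])\leq 63$.

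The plan is induction on $d:=(D.K_X)$. Two facts feed it. First, $\e([D])=0$ forces $\e([D-\sZ_i])=0$ for every generator $\sZ_i$, since an effective $D-\sZ_i+\sigma$ would make $D+\sigma=(D-\sZ_i+\sigma)+\sZ_i$ effective; and $[D-\sZ_i]\neq[0;0,0,0]$, since $[D-\sZ_i]=0$ would give $\e([D])=\e([\sZ_i])\geq 1$ (as $\sZ_i$ is an effective curve). Second, $\e([D+K_X])=64$ forces every one of the $64$ divisors in $[D+K_X]$ to be a \emph{nonzero} effective divisor (otherwise $[D+K_X]=[0;0,0,0]$, which has $\e=1$), hence each has $K_X$-degree $\geq 1$; since $(D+K_X).K_X=d+6$, this already gives $d\geq -5$. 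For the base case — the finitely many numerical classes with $d$ below a small explicit bound — one checks $\e([D+K_X])<64$ directly from Theorem~\ref{thm: PicX and EffX} and the effectiveness test of Section~\ref{subsec: effectiveness test}.

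For the inductive step, suppose $d$ is large and, for contradiction, $\e([D+K_X])=64$, so $D+K_X+\tau$ is effective for all $64$ torsion classes $\tau$ and, by Theorem~\ref{thm: PicX and EffX}(2), is a nonnegative combination of the $\sZ_i$. If $(D+K_X).\sZ_{i_0}<0$ for some $i_0$ with $\sZ_{i_0}^2=-1$, then $\sZ_{i_0}$ occurs in the effective representative of $D+K_X+\tau$ for \emph{every} $\tau$ (the remainder $D+K_X+\tau-\sZ_{i_0}$ is still effective), so $\e([(D-\sZ_{i_0})+K_X])=64$; since $(D-\sZ_{i_0}).K_X<d$ and the two facts above apply to $D-\sZ_{i_0}$, this contradicts the inductive hypothesis. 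The remaining case is that $D+K_X$ is \emph{almost nef}, $(D+K_X).\sZ_i\geq 0$ for all $i$; here one argues that ``$\e([D])=0$ with $d$ large'' is impossible — morally, such a class is non-big yet meets $K_X$ arbitrarily positively, and running the effectiveness test for all $64$ torsion twists of $D+K_X$ excludes this — or, alternatively, one peels off a fibre class $\pi^*\bar\sZ_i=2\sZ_i$, a base-point-free pencil pulled back from $\bar X$, a member of which every sufficiently positive effective divisor contains.

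The main obstacle is precisely this almost-nef case: making ``$\e([D+K_X])=64$ and $\e([D])=0$'' contradictory by a purely numerical argument. The delicate point is the parity condition $b_3,c_3\in 2\ZZ_{\geq 0}$ in~\eqref{eq: special effectiveness}, which is not a numerical invariant of $[D]$; one must check that as $\tau$ ranges over the torsion these parity obstructions cannot simultaneously permit every $D+K_X+\tau$ to be effective while forbidding every $D+\tau'$ from being effective.
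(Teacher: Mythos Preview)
Your induction stalls exactly where you admit it does, and that gap is fatal rather than cosmetic. Once $D+K_X$ is nef against every $(-1)$-curve $\sZ_0,\sZ_3$, no peeling is available; your fibre-class alternative does not rescue it either, since $\lvert 2(A_0+B_3)\rvert$ is a base-point-free pencil and no fixed member is forced into every effective representative of $D+K_X+\tau$, so you cannot conclude $\e\bigl([(D-2(A_0+B_3))+K_X]\bigr)=64$. The parity obstruction you flag is real and is precisely why a purely numerical induction cannot close: the effectiveness test is not a numerical invariant, and nothing in your scheme controls how the eight candidate subtractions in Step~2 interact across all $64$ torsion twists simultaneously.

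The paper's argument is direct and avoids induction entirely. Since $\e([D+K_X])=64$, one is free to pick the representative in $[D+K_X]$ whose $A_0[2]\oplus B_0[2]\oplus C_0[2]$-components are $(01\ 01\ 01)$. Step~2 of the effectiveness test (Section~\ref{subsec: effectiveness test}) then forces this divisor to contain one of the eight monomials in $(x^{C_1}+x^{C_2+C_3})(x^{A_1}+x^{A_2+A_3})(x^{B_1}+x^{B_2+B_3})$; since every such monomial contains $C_i+A_j+B_k$ with $i,j,k\in\{1,2\}$, the difference $D+K_X-(C_i+A_j+B_k)$ is effective. The key identity, read off from Table~\ref{table: symCoords}, is $A_1+B_1+C_1=K_X+\tau_0$ for a specific torsion $\tau_0$; as $A_2-A_1$, $B_2-B_1$, $C_2-C_1$ are themselves torsion, every $C_i+A_j+B_k$ equals $K_X+\tau$ for some $\tau\in\Tors\Pic X$, whence $D+\tau$ is effective. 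The idea you are missing is this: rather than lowering $d$, exploit the freedom in the torsion twist to force a copy of $K_X+(\text{torsion})$ to sit inside the chosen effective representative of $[D+K_X]$.
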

\begin{proof}
    It suffices to prove the if part. Since $\e([D+K_X])=64$, we have an effective divisor $D' \in \lvert D + K_X \rvert$ whose truncated symmetric coordinates are of the form
    \[
        \begin{array}{ c | r r r }
            \Bigl( d & \bigl(a\ 01) & \bigl(b\ 01) & \bigl(c\ 01) \Bigr)
        \end{array}.
    \]
    As we observed in Subsection~\ref{subsec: effectiveness test}, there are $2\times2\times2 = 8$ candidate subdivisors of $D'$. These candidates occur as exponents of the monomials in the following formal generating function:
    \[
        (x^{C_1} + x^{C_2+C_3})(x^{A_1} + x^{A_2+A_3})(x^{B_1} + x^{B_2+B_3}).
    \]
    In particular, there exist $i,j,k \in \{1,2\}$ such that $C_i+A_j+B_k$ is a subdivisor of $D'$, namely, $D' = (C_i+A_j+B_k)+D''$ for some effective $D''$. On the other hand, we have
    \[
        C_1+A_1+B_1 = K_X + \tau_0,
    \]
    where $\tau_0 = \bigl( 0 \mid (0\ 01)\ (0\ 01)\ (0\ 01) \bigr)$. Since $C_2-C_1$, $A_2-A_1$, and $B_2-B_1$ are $2$-torsions in $\Pic X$, $C_i+A_j+B_k = K_X + \tau$ for some $\tau \in \Tors \Pic X$. This shows that
    \[
        D'' = D' - (C_i+A_j+B_k) = (D+K_X) - (K_X+\tau) = D + \tau,
    \]
    thus $\e([D]) \geq 1$.
\end{proof}
%:
%:
\begin{proposition}\label{prop: 1,64-eff criterion}
    Let $[D]=[d;a,b,c]$ with $d > 0$, $M = \max\{0,a,b,c\}$, and $\ell = \frac 13 (d+a+b+c)$. Then, we have
    \begin{equation}\label{eq: 1-eff}
        \e([D]) \geq 1 \iff M \leq \ell \leq d.
    \end{equation}
    Moreover,
    \begin{equation}\label{eq: 64-eff}
        \e([D]) \geq 64 \iff d \geq 7\ \text{and}\ \max\{3,M+2\} \leq \ell \leq d-3.
    \end{equation}
\end{proposition}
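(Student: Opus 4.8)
The plan is to prove \eqref{eq: 1-eff} by directly counting the effective divisors in the numerical class $[D]$, and then to deduce \eqref{eq: 64-eff} from it together with Proposition~\ref{prop: 1-eff and 64-eff}. For \eqref{eq: 1-eff}, I would argue as follows. By Theorem~\ref{thm: PicX and EffX}\,(2), the class $[D]$ contains an effective divisor if and only if there are nonnegative integers $z_i^{\sZ}$, for $\sZ\in\{A,B,C\}$ and $i\in\{0,1,2,3\}$, such that $E:=\sum_{\sZ,i}z_i^{\sZ}\sZ_i$ satisfies $(E.K_X)=d$ and $(E.A_0,E.B_0,E.C_0)=(a,b,c)$. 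Reading the degree entries of Table~\ref{table: symCoords} as the relevant intersection numbers, this is an explicit linear system in the $z_i^{\sZ}$; the coefficients $z_1^{\sZ},z_2^{\sZ}$ enter only through $u^{\sZ}:=z_1^{\sZ}+z_2^{\sZ}$, which I would treat as a single nonnegative variable. Solving the $A_0$-, $B_0$-, $C_0$-equations for $z_0^A,z_0^B,z_0^C$, the $K_X$-equation collapses to the identity $3\ell=d+a+b+c$, and what remains is the solvability in nonnegative integers $x:=z_3^A$, $y:=z_3^B$, $z:=z_3^C$, $u:=u^A$, $v:=u^B$, $w:=u^C$ of
\[
    x+y+z+u+v+w=\ell,\qquad y+z+w\ge a,\quad x+z+u\ge b,\quad x+y+v\ge c,
\]
the last three inequalities being exactly $z_0^A,z_0^B,z_0^C\ge 0$.

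Since each of $u,v,w$ occurs in just one of the three inequalities, for fixed $x,y,z$ one takes each of them as small as permitted and then pads a single variable up to the required total $\ell$; hence the system is solvable if and only if $\ell\ge\min_{x,y,z\ge0}f(x,y,z)$, where $f(x,y,z):=(x+y+z)+(a-y-z)_++(b-x-z)_++(c-x-y)_+$ with $(t)_+:=\max\{0,t\}$. A direct estimate gives $f\ge\max\{0,a,b,c\}$ (retain the appropriate summands) and $2f\ge a+b+c$ (bound each positive part below by its argument), and exhibiting suitable integral $(x,y,z)$ attaining these bounds yields $\min f=\max\{0,a,b,c,\lceil\tfrac{a+b+c}{2}\rceil\}$. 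Finally, $d=3\ell-(a+b+c)$ shows $\ell\ge\lceil\tfrac{a+b+c}{2}\rceil\iff 2\ell\ge a+b+c\iff\ell\le d$; combining with $M\le\ell\iff\ell\ge\max\{0,a,b,c\}$ gives \eqref{eq: 1-eff}.

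To pass to \eqref{eq: 64-eff}, note first that $\e([D])\le 64$ always, so the left side means $\e([D])=64$, and that $\e([K_X])<64$ since $h^0(K_X)=p_g=0$ makes $K_X$ itself a non-effective member of $[K_X]$ (in fact $\e([K_X])=63$ by the remark preceding Proposition~\ref{prop: 1-eff and 64-eff}). Now if $[D]\neq[K_X]$, then $[D-K_X]\neq[0;0,0,0]$, and Proposition~\ref{prop: 1-eff and 64-eff} applied to $D-K_X$ gives $\e([D])=64\iff\e([D-K_X])\ge 1$. Here $[D-K_X]=[\,d-6;\,a-1,b-1,c-1\,]$, with its $\tfrac13(\cdot)$-value equal to $\ell-3$. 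If $d\ge 7$, then $[D]\neq[K_X]$ automatically and $d-6>0$, so \eqref{eq: 1-eff} applies to $[D-K_X]$ and yields $\e([D-K_X])\ge 1\iff\max\{0,a-1,b-1,c-1\}\le\ell-3\le d-6$, which rearranges to $\max\{3,M+2\}\le\ell\le d-3$. If $d\le 6$, then either $[D]=[K_X]$, whence $\e([D])<64$, or $[D]\neq[K_X]$, in which case an effective $E\in[D-K_X]$ would satisfy $(E.K_X)=d-6\le 0$; but writing $E\sim\sum z_i^{\sZ}\sZ_i$ with $z_i^{\sZ}\ge 0$ forces $(E.K_X)=\sum z_i^{\sZ}(\sZ_i.K_X)\ge 0$, with equality only if all $z_i^{\sZ}=0$, i.e. $E\sim 0$ and $[D]=[K_X]$, a contradiction; so $\e([D])<64$. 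In all cases this is exactly \eqref{eq: 64-eff}.

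The step I expect to be the main obstacle is the combinatorial minimization $\min_{x,y,z\ge0}f(x,y,z)=\max\{0,a,b,c,\lceil\tfrac{a+b+c}{2}\rceil\}$: the lower bound is routine, but producing an \emph{integral} minimizer requires a case split on the signs of $a,b,c$, and in the case $a+b+c>2\max\{a,b,c\}\ge 0$ a short rounding argument showing that the box $\{\,0\le x\le m-a,\ 0\le y\le m-b,\ 0\le z\le m-c\,\}$ with $m=\lceil\tfrac{a+b+c}{2}\rceil$ contains a lattice point of coordinate sum $m$. Everything else is bookkeeping: transcribing Table~\ref{table: symCoords} into the linear system, carrying out the elimination, and using $3\ell=d+a+b+c$.
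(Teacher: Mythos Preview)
Your argument is correct, but it takes a genuinely different route from the paper for \eqref{eq: 1-eff}. The paper proves the two implications separately: for $(\Leftarrow)$ it explicitly constructs an effective representative using only the six curves $\{\sZ_i\}_{i=0,3}$, via a ``replacement'' trick $\sZ_0\mapsto \sZ_3$ (which raises $a,b,c$ by $1$ each while fixing $d$) together with the building blocks $[A_3+B_0+C_0]=[3;0,0,0]$, $[A_3+B_3+C_0]=[3;1,1,1]$, $[A_3+B_3+C_3]=[3;2,2,2]$; for $(\Rightarrow)$ it argues by induction on $d$, checking that the inequalities $M\le\ell\le d$ hold for each generator $\sZ_i$ and are preserved under adding any $\sZ_i$. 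You instead transcribe Table~\ref{table: symCoords} into a linear system, eliminate $z_0^A,z_0^B,z_0^C$, and reduce both directions simultaneously to the single combinatorial identity $\min_{x,y,z\ge 0}f(x,y,z)=\max\{0,a,b,c,\lceil(a+b+c)/2\rceil\}$, which you then translate back via $2\ell\ge a+b+c\iff \ell\le d$. Your approach is more uniform and makes the equivalence transparent at the cost of the minimization lemma; the paper's approach is more constructive and closer to the geometry, avoiding any optimization but splitting into two separate arguments. For \eqref{eq: 64-eff} both proofs proceed identically through Proposition~\ref{prop: 1-eff and 64-eff}, though you are slightly more explicit about the boundary cases $d\le 6$ and $[D]=[K_X]$.

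One small remark on the step you flag as the obstacle: in the case $a+b+c>2\max\{a,b,c\}\ge 0$ (which forces $a,b,c>0$), no genuine rounding is needed. Writing $m=\lceil(a+b+c)/2\rceil$ and taking $(x,y,z)=(m-a,\,m-b,\,a+b-m)$ already gives nonnegative integers with $x+y+z=m$ lying in the box, since $a+b-m\ge 0$ follows from $m\le a+b$ (as $c\le a$ gives $m\le \lceil(2a+b)/2\rceil\le a+b$) and $a+b-m\le m-c$ is just $a+b+c\le 2m$. So the ``short rounding argument'' is in fact a one--line greedy choice.
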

\begin{proof}
    First we prove the $({\Leftarrow})$ direction of \eqref{eq: 1-eff}. Assume $M \leq \ell \leq d$. Proving that $\e([D])\geq 1$ is equivalent to showing that $[D]$ can be written as a nonnegative sum of $\sZ_i$ with $i \in \{0,3\}$. Also, we observe 
    \[
        [\sZ_3] = [0;1,1,1] + [\sZ_0].
    \]
    Hence, $\sZ_3$ differs from $\sZ_0$ in that it increases the values of $a,b,c$ by $1$.
    
    Define $a_0',b_0',c_0'$ by $a_0' := M-a_0$, etc. Let $d':= a_0'+b_0'+c_0'$,
    \[
        [a_0'A_0 + b_0'B_0 + c_0'C_0] = [ d'; -a_0', -b_0', -c_0' ].
    \]
    If we replace single $A_0$ by $A_3$, then it becomes $[d'; \, 1-a_0',\, 1-b_0',\, 1-c_0']$. By iterating such a replacement $\sZ_0 \mapsto \sZ_3$, we obtain $[d' ;\, k'-a_0',\,k'-b_0',\,k'-c_0']$\ ($0 \leq k' \leq d'$) as a nonnegative sum of $\{\sZ_i\}_{i=0,3}$.
    \begin{description}
%				\item[Case 1] Suppose $M=0$. Then, $\ell = \frac 13 ( d+a+b+c) \in \ZZ_{\geq 0}$, and
%				\[
%					\bigl[ \ell(A_3+B_0+C_0) + a_0'A_0 + b_0' B_0 + c_0' C_0 \bigr] = [d;a,b,c].
%				\]
        \item[Case 1] Suppose $d' \geq M$. By the above observation, we may write $[d';\, M-a_0',\,M-b_0',\,M-c_0']$ as a nonnegative sum of $\{\sZ_i\}_{i=0,3}$. The inequality $M \leq \ell$ implies that
        \[
            d' = 3M - (a+b+c) \leq 3 \ell - (a+b+c) = d,
        \]
        thus,
        \[
            [d;a,b,c] = [d';\, M-a_0',\,M-b_0',\,M-c_0'] + (d-d')[A_3+B_0+C_0].
        \]
        \item[Case 2] Consider the case $d' < M$. In this case, we cannot achieve $[d';\, M-a_0',\,M-b_0',\,M-c_0']$ by previous method since $M$ is too large. The maximum possible is
        \[
            [a_0' A_3 + b_0' B_3 + c_0' C_3 ] = [d';\, d'-a_0',\, d'-b_0',\, d'-c_0'].
        \]
        We have to increase the $a,b,c$-coordinates by $M-d'$, and the $d$-coordinate by $d-d'$. To do this, we use
        \[
            \left\{
                \begin{array}{l@{}l@{}}
                    [A_3+B_0+C_0] &{}= [3;0,0,0] \\{}
                    [A_3+B_3+C_0] &{}= [3;1,1,1] \\{}
                    [A_3+B_3+C_3] &{}= [3;2,2,2]\,.%
                \end{array}%
            \right.%\raisebox{-1.1\baselineskip}{.}
        \]
        Let $\ell_0,\ell_1,\ell_2 \in \ZZ_{\geq 0}$ satisfy
        \[
            \textstyle\ell_0 + \ell_1 + \ell_2 = \frac 13 (d-d'),\quad \ell_1+2\ell_2 = M-d'.
        \]
        This is always possible since $\ell \leq d$ implies $M-d' \leq \frac 23 (d-d')$. Then,
        \[
            \ell_0[3;0,0,0] + \ell_1[3;1,1,1] + \ell_2[3;2,2,2] = [d-d';\, M-d',\,M-d',\,M-d']
        \]
        Adding this to $[a_0'A_3+b_0'B_3+c_0'C_3]$, we get $[d;a,b,c]$.
%:
%:
    \end{description}
%:
%:
    To prove $(\Rightarrow)$ direction, we proceed by induction on $d$. First, it is easy to check the inequalities when $D \in \{ Z_i \}_{i=0,3}$. Assume $[D] = [d;a,b,c]$ satisfies $\e([D]) \geq 1$ and $M \leq \ell \leq d$. Then, for $[D+A_0] = [d+1; a-1,b,c]$, $\ell' := \frac 13((d+1) + (a-1)+b+c) = \ell$ and $M' \in \{ M-1, M\}$, thus the desired inequality $M' \leq \ell' \leq d+1$ holds. For $[D+A_3] = [d+1; a,b+1,c+1]$ we have $\ell' = \ell+1$, $M' \in \{M, M+1\}$, thus $M' \leq \ell' \leq d+1$ also holds.
    
    Next, let us consider \eqref{eq: 64-eff}. By Proposition~\ref{prop: 1-eff and 64-eff}, $[D]=[d;a,b,c]$ has $\sf e$-number $64$ if and only if $[D] \neq [K_X]$ and $\e([D-K_X]) \geq 1$. The condition $[D] \neq [K_X]$ requires $d \geq 7$. Also,
    \[
        [D-K_X] = [ d-6;\, a-1,b-1,c-1],
    \]
    so we put $\ell' := \frac13 ( (d-6) + (a-1)+(b-1)+(c-1) ) = \ell-3$ and $M' = \max\{0, M-1\}$. Then, the condition $\e([D-K_X]) \geq 1$ is equivalent to $ M' \leq \ell' \leq d-6$, i.e. $M'+3 = \max \{ 3, M+2\} \leq \ell \leq d-3$.
\end{proof}
%:
%:
\bigskip
\section{Main algorithm and its verification}\label{sec: Main Algorithm}
%:
\subsection{Reduced forms}
Throughout this section, we describe the routine for computing $h^p(D)$, given $D \in \Pic X$ in (truncated) symmetric coordinates. Before proceeding, we discuss an elementary, but powerful tool for the computation of cohomology.
\begin{proposition}\label{prop: trimming lemma}
    Let $D$ be a divisor on $X$. The following statements hold.
    \begin{enumerate}
        \item If $(D. A_0) < 0$, then $h^0(D) = h^0(D-A_0)$.
        \item Assume $(D\mathbin .A_0)=0$. In the symmetric coordinates of $D$, suppose $A_0[2]$-component is not equal to $00$. Then, $h^0(D) =h^0(D-A_0)$.
    \end{enumerate}
    The curve $A_0$ in {\normalfont(1)} and {\normalfont(2)} can be replaced by any curve in $\{\sZ_i\}_{i=0,3}$.
\end{proposition}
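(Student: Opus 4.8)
The plan is to prove part (1) and part (2) together using the long exact sequence associated with the closed embedding $A_0 \hookrightarrow X$. Consider the standard short exact sequence
\[
    0 \longrightarrow \mathcal O_X(D - A_0) \longrightarrow \mathcal O_X(D) \longrightarrow \mathcal O_{A_0}(D) \longrightarrow 0,
\]
where $\mathcal O_{A_0}(D) = \mathcal O_X(D)\big\vert_{A_0}$ is a line bundle of degree $(D.A_0)$ on the elliptic curve $A_0$ (recall from Section~2 that $A_0$ is elliptic with $A_0^2=-1$). Taking cohomology, it suffices to show that $H^0(A_0, \mathcal O_{A_0}(D)) = 0$ under the hypotheses of either (1) or (2), since then the restriction of any global section of $\mathcal O_X(D)$ to $A_0$ vanishes, forcing $H^0(D) = H^0(D-A_0)$ (the map $H^0(D-A_0)\to H^0(D)$ is always injective).

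For part (1): if $(D.A_0) < 0$, then $\mathcal O_{A_0}(D)$ is a line bundle of negative degree on a smooth curve, so $H^0(A_0,\mathcal O_{A_0}(D)) = 0$ immediately, with no need for the elliptic structure. For part (2): if $(D.A_0)=0$, then $\mathcal O_{A_0}(D)$ has degree $0$ on the elliptic curve $A_0$, so it has a nonzero global section if and only if it is the trivial bundle $\mathcal O_{A_0}$. Under the identification $e_{A_0}\colon A_0 \xto\sim \Pic^0 A_0$ fixed in Section~2, triviality of $D\big\vert_{A_0}$ as a degree-$0$ class is exactly the statement that $e_{A_0}^{-1}(D\big\vert_{A_0} - 0\cdot A_0^{00}) = 0$ in $A_0$, i.e. that the $A_0[2]$-component (the $(\ZZ/2)^2$-part of the $\Pic A_0$-coordinate in the symmetric coordinates) equals $00$ and, more precisely, that the full torsion point $\tau$ vanishes. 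Since the hypothesis is that the $A_0[2]$-component is \emph{not} $00$, the degree-$0$ bundle $D\big\vert_{A_0}$ is nontrivial, hence has no sections, and we conclude as before. The final sentence of the statement — that $A_0$ may be replaced by any $\sZ_i$ with $i\in\{0,3\}$ — follows because each such curve is likewise elliptic with self-intersection $-1$, carries the analogous group structure and identification $e_{\sZ_i}$, and the argument is verbatim the same; this is also covered by the cyclic and flip symmetries of Remark~\ref{rmk: symmetry}.

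The only subtlety, and the step I would be most careful about, is the precise bookkeeping in part (2): one must make sure that the $A_0[2]$-component recorded in the truncated symmetric coordinates is genuinely the obstruction to $D\big\vert_{A_0}$ being trivial in $\Pic^0 A_0$, i.e. that $\varphi$ records $e_{A_0}^{-1}(D\big\vert_{A_0} - (D.A_0)A_0^{00})$ and that when $(D.A_0)=0$ this torsion class vanishes \emph{exactly} when the $(\ZZ/2)^2$-label is $00$. This is immediate from the definition of symmetric coordinates in Definition~\ref{def: SymCoord} together with the identification $(\ZZ/2)^2 \xto\sim A_0[2]$ from Section~2, but it is worth spelling out so that the reader sees why the hypothesis "$A_0[2]$-component is not $00$" is the correct and sharp condition (note that when $(D.A_0)=0$ the torsion part of $D\big\vert_{A_0}$ automatically lies in $A_0[2]$, by the image description in Theorem~\ref{thm: PicX and EffX}(1b), so the $(\ZZ/2)^2$-label captures it completely). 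Everything else is a one-line invocation of Riemann--Roch (or just degree considerations) on a genus-one curve.
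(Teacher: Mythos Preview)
Your proof is correct and follows essentially the same approach as the paper: both use the short exact sequence $0 \to \mathcal O_X(D-A_0) \to \mathcal O_X(D) \to \mathcal O_{A_0}(D) \to 0$ and reduce to showing $H^0(\mathcal O_{A_0}(D))=0$ under either hypothesis. You simply spell out in more detail why the hypotheses force this vanishing (negative degree in case (1), nontrivial degree-zero bundle on an elliptic curve in case (2)), whereas the paper condenses this to a single sentence.
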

\begin{proof}
    From the short exact sequence
    \[
        0 \to \mathcal O_X(D-A_0) \to \mathcal O_X(D) \to \mathcal O_{A_0}(D) \to 0,
    \]
    we have $h^0(D) \leq h^0(D-A_0) + h^0(\mathcal O_{A_0}(D))$. The conditions in (1) and (2) imply $H^0(\mathcal O_{A_0}(D)) = 0$, hence $h^0(D) = h^0(D-A_0)$.
\end{proof}
%:
Given a divisor $D \in \Pic X$, we may reduce $D$ via the following procedure. 
\begin{enumerate}
    \item If $(D.K_X) >  0$, go to (2). Otherwise, stop the procedure.
    \item Find a curve $\sZ_i$ with $i \in \{0,3\}$ satisfying one of the hypotheses in Proposition~\ref{prop: trimming lemma}. If there is such a curve, then go to (3). Otherwise, stop the procedure.
    \item Replace $D$ by $D-\sZ_i$, and go back to (2).
\end{enumerate}
By Proposition~\ref{prop: trimming lemma}, we see that this procedure does not change $h^0(D)$. If (3) is invoked, then $(D.K_X)$ drops by $1$, thus the procedure terminates in finite steps.

Since $K_X$ is ample, we have $h^0(D) = 0$ if $(D.K_X) < 0$. Thus, if the procedure terminates by (1), then $h^0(D)=0$ unless $D$ is principal.
\begin{proposition}\label{prop: Kleiman criterion}
    Suppose  $D \in \Pic X$ satisfies
    \[
        (D.A_0),\, (D.A_3),\, (D.B_0),\, (D.B_3),\, (D.C_0),\, (D.C_3) \geq 0.
    \]
    Then, $D$ is nef. Moreover, $D$ is ample if all of the above inequalities are strict.
\end{proposition}
\begin{proof}
    Let $E \in \Pic X$ be an effective divisor. By Theorem~\ref{thm: PicX and EffX}\,(2), $E$ is a nonnegative sum of the curves in $\{ \sZ_i\}_{i=0,1,2,3}$. By Remark~\ref{rmk: Some linEquiv}, we have numerical equivalences
    \[
        A_1 \sim_{\sf num} A_2 \sim_{\sf num}A_0+C_3
    \]
    and etc., thus we may pick $E' \in [E]$ such that $E'$ is a nonnegative sum of the curves in $\{\sZ_i\}_{i=0,3}$. By assumption, $(D.E) = (D.E') \geq 0$, thus $D$ is nef. The last ampleness statement follows from Kleiman's criterion; by Theorem~\ref{thm: PicX and EffX}\,(2), the Mori cone $\op{\overline {NE}}(X)$ is polyhedral, hence $\op{\overline {NE}}(X) = \op{NE}(X)$.
\end{proof}

If the procedure terminates by (2), then $D$ is nef by Proposition~\ref{prop: Kleiman criterion}. Moreover, if $(D.\sZ_i)=0$ for some $\sZ \in \{A,B,C\}$ and $i \in \{0,3\}$, then by Proposition~\ref{prop: trimming lemma}\,(2), the $\sZ_i[2]$-component in the symmetric coordinates of $D$ is $00$.

%Given an arbitrary divisor $D \in \Pic X$, one may iteratively apply Proposition~\ref{prop: trimming lemma} to ``cut out the unnecessary parts'' of $D$. This procedure may continue indefinitely; at some point, we arrive at $D'$ with $(K_X\mathbin .D') < 0$, then $h^0(D) = h^0(D')=0$, so $D$ is not effective. When the procedure terminates after a finite number of iterations, then the resulting $D'$ is nef by Theorem~\ref{thm: PicX and EffX}\,(2), and whenever $(D'.\sZ_i)=0$ for some $i=0,3$, the $\sZ_i[2]$-component in symmetric coordinates is $00$.
%:
\begin{definition}\label{def: reduced form}
    A nef divisor $D$ with $(K_X.D) >0$ is of the \emph{reduced form} if for each curve $\sZ_i$\ ($i=0,3$) with $(D.\sZ_i)=0$, the $\sZ_i[2]$-component in the symmetric coordinates is $00$.
\end{definition}

Proposition~\ref{prop: trimming lemma} implies that computing $h^0$ of the reduced form is sufficient for our 
purpose. We emphasize that given an effective divisor, finding a reduced form is a routine procedure.

\begin{lemma}\label{lem: h0*h2=0}
    For any divisor $D \in \Pic X$, either $h^0(D)=0$ or $h^2(D) = 0$.
\end{lemma}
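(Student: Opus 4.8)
The plan is to argue by contradiction: suppose $D$ is a divisor with both $h^0(D) > 0$ and $h^2(D) > 0$. By Serre duality, $h^2(D) = h^0(K_X - D)$, so both $D$ and $K_X - D$ are effective. Then $D$ and $K_X - D$ can each be written as a nonnegative sum of the curves $\sZ_i$ by Theorem~\ref{thm: PicX and EffX}(2); adding these two expressions shows that $K_X$ itself is represented by a nonnegative sum $\sum z_i \sZ_i$ of the generators, with the two partial sums being genuinely effective divisors. The contradiction should come from the fact that this decomposition of $K_X$ is far too constrained: since $(K_X . K_X) = 6$ and each $\sZ_i$ pairs with $K_X$ to give a positive integer ($1$ for $i=0,3$ and $2$ for $i=1,2$, per Table~\ref{table: symCoords}), the total $\sum z_i (\sZ_i . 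K_X) = 6$ limits the multiset of generators severely. The real work is to show that none of the remaining finitely many combinations can split as $D + (K_X - D)$ with both halves effective.

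First I would enumerate, up to the symmetries of Remark~\ref{rmk: symmetry}, the ways to write $[K_X]$ as a nonnegative combination with $\sum z_i(\sZ_i.K_X) = 6$; using the numerical-class description $[K_X] = [6;1,1,1]$ together with Proposition~\ref{prop: 1,64-eff criterion}, one checks that $\e([K_X]) = 1$ (the inequality $\max\{3,M+2\}\le \ell\le d-3$ with $d = 6$, $M = 1$, $\ell = 3$ fails, and indeed $d \ge 7$ fails), so in fact $K_X$ is the \emph{unique} effective divisor in its linear class. This is the key simplification: if $D$ and $K_X - D$ were both effective, then $D + (K_X - D)$ would be an effective divisor linearly equivalent to $K_X$, hence equal to the canonical curve, which forces $D$ to be a "sub-divisor" of the specific effective representative of $K_X$. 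Then I would check directly, using the symmetric coordinates in Table~\ref{table: symCoords}, that no proper way of splitting that representative gives a $D$ whose class also lies in $\Pic X$ consistently with $K_X - D$ being effective — more cleanly, I would just observe that $h^0(K_X) = 1$ with the unique section vanishing on the canonical curve, and run the effectiveness test of Subsection~\ref{subsec: effectiveness test} on the candidate pairs.

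A cleaner route, which I would prefer to write up, avoids case analysis: if $h^0(D) > 0$ and $h^0(K_X - D) > 0$, pick effective representatives $D$ and $D' \sim K_X - D$ with $D, D' \ge 0$. Then $D + D' \sim K_X$ is effective, and since $\e([K_X]) = 1$ we get $D + D' = K_X$ as divisors. Now $(D . A_0) + (D' . A_0) = (K_X . A_0) = 1$, and likewise for each of the six curves $\sZ_i$ with $i = 0,3$ (all of which meet $K_X$ in degree $1$), while $(D.\sZ_i) + (D'.\sZ_i) = 2$ for $i = 1,2$. Since $D, D' \ge 0$ and the $\sZ_i$ with $i=0,3$ have $\sZ_i^2 = -1$, writing $D = \sum z_j \sZ_j$ one sees $(D.A_0) \ge -z_{A_0}$ with equality governed by which curves appear; pushing this through the six $(-1)$-curve constraints pins down $D$ to one of a handful of classes, each of which I then rule out by a direct coordinate check. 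The main obstacle is the bookkeeping in this last step — keeping the cyclic and flip symmetries straight while confirming that every surviving candidate fails — but no individual verification is hard, and the finiteness is guaranteed by $\e([K_X]) = 1$.
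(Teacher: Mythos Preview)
Your ``cleaner route'' paragraph gets within one breath of the proof and then veers off. You correctly observe that if $h^0(D)>0$ and $h^0(K_X-D)>0$ then, picking effective representatives, $D+D'$ is an effective divisor linearly equivalent to $K_X$. But that is already the contradiction: $p_g(X)=h^0(K_X)=0$, so $|K_X|=\emptyset$ and no effective divisor can be linearly equivalent to $K_X$. This is exactly the paper's proof, phrased via multiplication of sections: a nonzero $s\in H^0(D)$ and $s'\in H^0(K_X-D)$ would give $ss'\neq 0$ in $H^0(K_X)$, contradicting $p_g=0$. You never invoke $p_g(X)=0$ anywhere, which is why you end up doing a long case analysis that is entirely unnecessary.

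Worse, the detour you take is built on a false claim. You assert $\e([K_X])=1$, but in fact $\e([K_X])=63$: by Proposition~\ref{prop: flexible torsions} every $K_X+\tau$ with $\tau\neq 0$ is effective, while $K_X$ itself is not. Your reading of Proposition~\ref{prop: 1,64-eff criterion} only shows $\e([K_X])<64$; the inequality $M\le \ell\le d$ for $[6;1,1,1]$ is $1\le 3\le 6$, which holds, so $\e([K_X])\ge 1$. Even if $\e([K_X])=1$ were true, it would not force $D+D'$ to equal ``the canonical curve'' as an honest divisor; it would only pin down the linear class, and $K_X$ is precisely the class that contains \emph{no} effective member. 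Drop everything after ``$D+D'\sim K_X$ is effective'' and cite $p_g(X)=0$.
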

\begin{proof}
    Assume that $s \in H^0(D)$ and $s' \in H^0(K_X-D)$ are nonzero sections. Then, the natural map
    \[
        H^0(D) \otimes H^0(K_X-D) \to H^0(K_X)
    \]
    maps $s \otimes s'$ to $ss' \neq 0$, which contradicts $p_g(X)=0$.
\end{proof}

\subsection{Torsions in $\Pic X$} Let $D$ be an effective divisor of the reduced form. To compute $h^p(D)$, we first categorize the numerical classes, and then study $D+\tau$ for each individual $\tau \in \Tors \Pic X$. It is crucial to understand how $\tau$ affects $h^p(D+\tau)$.% Among $63$ nonzero torsions, three of them behave distinctively.
%:
\begin{notation}\label{nota: torsion elements}
    Let $\tau \in \Pic X$ be a torsion element. Then, the symmetric coordinates of $\tau$ are of the form
    \[
        \begin{array}{ c | r r r | r r r}
            \Bigl( 0 & \bigl(0\ \text{\textasteriskcentered\textasteriskcentered}) & \bigl(0\ \text{\textasteriskcentered\textasteriskcentered}) & \bigl(0\ \text{\textasteriskcentered\textasteriskcentered}) &
            \bigl(0\ \text{\textasteriskcentered\textasteriskcentered}) & \bigl(0\ \text{\textasteriskcentered\textasteriskcentered}) & \bigl(0\ \text{\textasteriskcentered\textasteriskcentered})
            \Bigr)
        \end{array}.			
    \]
    The torsion elements contain many redundant zeros in their symmetric coordinates. We rule out those zeros and simply write
    \[
        \tau = \bigl( 	\text{\textasteriskcentered\textasteriskcentered}\ \text{\textasteriskcentered\textasteriskcentered}\ \text{\textasteriskcentered\textasteriskcentered}\ \big\vert\ \text{\textasteriskcentered\textasteriskcentered}\ \text{\textasteriskcentered\textasteriskcentered}\ \text{\textasteriskcentered\textasteriskcentered} \bigr),
    \]
    or even simpler, $\tau = ( \text{\textasteriskcentered\textasteriskcentered}\ \text{\textasteriskcentered\textasteriskcentered}\ \text{\textasteriskcentered\textasteriskcentered} )$ in truncated coordinates. We note that when $\tau$ is written in truncated coordinates, it always refers to the first six digits, namely, the $A_0[2]\oplus B_0[2]\oplus C_0[2]$-components. Lemma~3 in \cite{AlexeevOrlov:Burniat} describes how to recover the full coordinates. Suppose $\tau = ( a_1 a_2\ b_1b_2\ c_1c_2 )$ in truncated form, then
    \begin{equation}\label{eq: torsion dependency}
        \tau = \bigl(\ a_1 a_2\ b_1b_2\ c_1c_2\ \big\vert\ (a_1+b_2)a_2\ (b_1+c_2)b_2\ (c_1+a_2)c_2\ \bigr).
    \end{equation}
%:
\end{notation}
%:
Now, we aim to compute $h^0(K_X + \tau)$. If $\tau \neq 0$, then $h^0(K_X + \tau ) \geq 1$. Indeed, we will show that this number is either $1$ or $2$.
%:
\begin{lemma}\label{lem: D along (A0+B3)}
    Let $D \in \Pic X$ be a divisor satisfying $(D.A_0) = (D.B_3) = 1$. Then,
    \[
        h^0(\mathcal O_{A_0+B_3}(D) ) = \left\{
        \begin{array}{ll}
            2 & \text{if } D\big\vert_{A_0[2]} = 00\ \text{and}\ D\big\vert_{B_3[2]} = 10 \\
            1 & \text{otherwise}.
        \end{array}
        \right.
    \]
\end{lemma}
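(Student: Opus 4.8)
The plan is to analyze the restriction sequence for the reducible curve $A_0 + B_3$, which is a nodal union of two curves meeting at the single point $A_0 \cap B_3 = A_0^{00} = B_3^{00}$ (this is the identity of the group law on $A_0$). I would begin from the Mayer--Vietoris-type sequence
\[
    0 \to \mathcal O_{A_0+B_3}(D) \to \mathcal O_{A_0}(D) \oplus \mathcal O_{B_3}(D) \to \mathcal O_{A_0 \cap B_3}(D) \to 0,
\]
so that $h^0(\mathcal O_{A_0+B_3}(D))$ is computed from $h^0(\mathcal O_{A_0}(D))$, $h^0(\mathcal O_{B_3}(D))$, and the rank of the evaluation map at the node. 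Since $(D.A_0) = (D.B_3) = 1$ and both $A_0$ and $B_3$ are smooth curves of positive genus (elliptic and of genus $2$ respectively), a degree-$1$ line bundle on either has $h^0 \le 1$, with $h^0 = 1$ exactly when it is effective, i.e.\ linearly equivalent to a single point.

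Next I would translate the $[2]$-conditions into statements about which point. On $A_0$, using the identification $e_{A_0}$ and the $(\ZZ/2)^2 \xto\sim A_0[2]$ of the preliminaries, $\mathcal O_{A_0}(D)$ has degree $1$ and its class minus $A_0^{00}$ is the torsion $\tau_A := D\big\vert_{A_0[2]}$; so $\mathcal O_{A_0}(D) = \mathcal O_{A_0}(A_0^{00} + \tau_A)$, which is effective of degree $1$, hence $h^0 = 1$, and its unique section vanishes precisely at the point $A_0^{00}+\tau_A$. In particular the section is nonzero at the node $A_0^{00}$ if and only if $\tau_A \ne 00$. The same analysis on $B_3$: $\mathcal O_{B_3}(D)$ is effective of degree $1$ with unique section vanishing at $B_3^{00} + \tau_B$ where $\tau_B := D\big\vert_{B_3[2]}$, so the section is nonzero at the node $B_3^{00}$ iff $\tau_B \ne 00$ — but here I must be careful about the normalization of the group law on $B_3$: the identity was set to be $C_3 \cap B_3$, not $A_0 \cap B_3$, so the node $A_0 \cap B_3 = B_3^{00}$ corresponds to some \emph{specific} nonzero $2$-torsion point on $B_3$, which one reads off from Table~\ref{table: symCoords} (the entry for $A_0$ in the $\Pic B_3$-column is $1\ 10$). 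This is exactly why the exceptional case is "$\tau_A = 00$ and $\tau_B = 10$" rather than the symmetric-looking "$00$ and $00$".

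With this in hand the count is immediate. The evaluation map $\mathcal O_{A_0}(D) \oplus \mathcal O_{B_3}(D) \to \mathcal O_{A_0\cap B_3}(D) \cong \CC$ sends $(s_A, s_B)$ to $s_A(\text{node}) - s_B(\text{node})$ (up to a trivialization). Its source has dimension $h^0(\mathcal O_{A_0}(D)) + h^0(\mathcal O_{B_3}(D)) = 1 + 1 = 2$. The map is zero — forcing $h^0(\mathcal O_{A_0+B_3}(D)) = 2$ — precisely when both $s_A$ and $s_B$ vanish at the node, i.e.\ when $\tau_A = 00$ and the class on $B_3$ meets the node, i.e.\ $\tau_B$ equals the $2$-torsion point of $B_3$ corresponding to the node, which in the chosen coordinates is $10$. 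In every other configuration the map is surjective and $h^0 = 1$. The main subtlety to get right — and the only real obstacle — is the bookkeeping in the previous paragraph: correctly identifying which $2$-torsion label on $B_3$ names the node $A_0 \cap B_3$ under the non-matching choice of identity element, and confirming via Table~\ref{table: symCoords} that this label is $10$; everything else is the standard cohomology of a nodal curve of arithmetic genus $g(A_0)+g(B_3)+0 = 3$ (though the genus itself plays no role beyond the degree-$1$ bound).
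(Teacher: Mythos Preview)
Your proof matches the paper's almost exactly: the same short exact sequence for the nodal curve, the same reduction to whether both unique sections vanish at the node $A_0 \cap B_3$, and the same appeal to Table~\ref{table: symCoords} (the $\Pic B_3$-entry of $A_0$ is $(1\ 10)$) to identify the node's label on the $B_3$ side as $10$. Two inconsequential slips to fix: $B_3$ is elliptic, not of genus $2$ (only $\sZ_1,\sZ_2$ have genus $2$), and its identity $B_3^{00}$ is $C_0 \cap B_3$, not $C_3 \cap B_3$ --- neither affects the argument, since you ultimately read the correct label from the table.
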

\begin{proof}
    Consider the short exact sequence
    \[
        0 \to \mathcal O_{A_0 + B_3} (D) \to \mathcal O_{A_0}(D) \oplus \mathcal O_{B_3}( D ) \to \mathcal O_{A_0 \cap B_3} \to 0.
    \]
    Since the curve $A_0$ is not rational and $(D . A_0) =1$, we have $h^0(\mathcal O_{A_0}(D))=1$. Similarly, $h^0(\mathcal O_{B_3}(D)) = 1$. Thus, $h^0(\mathcal O_{A_0+B_3}(D)) \leq 2$, and the equality holds if and only if the nonzero sections of $H^0(\mathcal O_{A_0}(D))$ and $H^0(\mathcal O_{B_3}(D))$ vanish at the point $P := A_0 \cap B_3$ simultaneously. Since $\Pic A_0$-component of the symmetric coordinates of $B_3$ is $(1\ 00)$, the nonzero section of $H^0(\mathcal O_{A_0}(D))$ vanishes at $P$ if and only if $D\big\vert_{A_0[2]} = 00$. Similarly, $\Pic B_3$-component of the symmetric coordinates of $A_0$ is $(1\ 10)$, hence the nonzero section of $H^0(\mathcal O_{B_3}(D))$ vanishes at $P$ if and only if $D\big\vert_{B_3[2]} = 10$.
\end{proof}

\begin{proposition}\label{prop: flexible torsions}
    Let $\tau \in \Pic X$ be a nonzero torsion, and let $K_\tau = K_X + \tau$. Then,
    \[
        h^0(K_\tau) = \left\{
            \begin{array}{ll}
                2 & \text{if }\tau \in \{(10\ 00\ 00),\, (00\ 10\ 00),\, (00\ 00\ 10) \}\\
                1 & \text{otherwise.}
            \end{array}
        \right.
    \]
\end{proposition}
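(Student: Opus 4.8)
The argument rests on the restriction sequence
\[
0 \longrightarrow \mathcal O_X(K_\tau - A_0 - B_3) \longrightarrow \mathcal O_X(K_\tau) \longrightarrow \mathcal O_{A_0 + B_3}(K_\tau) \longrightarrow 0
\]
together with its five images under the symmetry group generated by the cyclic rotation $A\mapsto B\mapsto C$ and the flip $i\leftrightarrow 3-i$ of Remark~\ref{rmk: symmetry}; this group, of order $6$, acts transitively on the six hexagon-adjacent pairs $A_0+B_3$, $B_0+C_3$, $C_0+A_3$, $A_0+C_3$, $B_0+A_3$, $C_0+B_3$. Since $(K_\tau.A_0)=(K_\tau.B_3)=1$ and $K_X\big|_{A_0[2]}=K_X\big|_{B_3[2]}=00$ (Table~\ref{table: symCoords}), Lemma~\ref{lem: D along (A0+B3)} gives $h^0(\mathcal O_{A_0+B_3}(K_\tau))\in\{1,2\}$, with the value $2$ occurring exactly when $\tau\big|_{A_0[2]}=00$ and $\tau\big|_{B_3[2]}=10$; by \eqref{eq: torsion dependency} this translates into an explicit condition on the truncated coordinates of $\tau$, and similarly for each of the other five pairs. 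Taking cohomology, $h^0(K_\tau)\le h^0(K_\tau-A_0-B_3)+h^0(\mathcal O_{A_0+B_3}(K_\tau))$, so everything reduces to controlling the degree-$4$ twists $K_\tau-P-Q$.

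For the bound $h^0(K_\tau)\le 2$: each $K_\tau-P-Q$ lies in one of the three numerical classes $[4;1,1,0]$, $[4;1,0,1]$, $[4;0,1,1]$, independently of $\tau$ and with each class realized by two of the six pairs. I claim that for every nonzero $\tau$ at least one of the six twists is non-effective, whence $h^0(K_\tau)\le h^0(\mathcal O_{P+Q}(K_\tau))\le 2$ for that pair. Non-effectiveness of $K_\tau-P-Q$ is decided by the effectiveness test of Subsection~\ref{subsec: effectiveness test} applied to the relevant degree-$4$ classes. A convenient shortcut: whenever $(P+Q)+\tau$ is effective — being a divisor of $K_X$-degree $2$, this is read off at once from the short list of such divisors, namely the curves $\sZ_1,\sZ_2$ and the pairs $\sZ_i+\sZ_j$ with $i,j\in\{0,3\}$ — Serre duality together with $2\tau=0$ and Lemma~\ref{lem: h0*h2=0} gives $h^0(K_\tau-P-Q)=h^2\bigl((P+Q)+\tau\bigr)=0$. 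Carrying out this verification for the $63$ nonzero torsions, reduced by the order-$6$ symmetry to a short list of orbit representatives, is the main obstacle; it is entirely elementary but requires bookkeeping.

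For the torsions where $h^0=2$: with $\tau_A=(10\ 00\ 00)$, comparison of symmetric coordinates (Table~\ref{table: symCoords}) gives
\[
K_{\tau_A} \;=\; 2A_0+2B_3+C_0+C_3 \;=\; 2A_3+2B_0+C_0+C_3 ,
\]
two distinct effective divisors in the class $K_{\tau_A}$; hence $\dim|K_{\tau_A}|\ge 1$, so $h^0(K_{\tau_A})\ge 2$, and with the upper bound $h^0(K_{\tau_A})=2$. The cyclic rotation gives $h^0(K_{\tau_B})=h^0(K_{\tau_C})=2$ for $\tau_B=(00\ 10\ 00)$ and $\tau_C=(00\ 00\ 10)$.

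For every remaining nonzero $\tau$ we have $h^0(K_\tau)\ge 1$, so it is enough to show $h^0(K_\tau)\le 1$. One chooses a hexagon pair $P+Q$ for which (i) the special configuration of Lemma~\ref{lem: D along (A0+B3)} fails, so that $h^0(\mathcal O_{P+Q}(K_\tau))=1$ — using \eqref{eq: torsion dependency} one checks that a nonzero $\tau\notin\{\tau_A,\tau_B,\tau_C\}$ always admits such a pair — and (ii) $K_\tau-P-Q$ is non-effective, arranged as in the upper-bound step. Then the restriction sequence gives $h^0(K_\tau)\le h^0(\mathcal O_{P+Q}(K_\tau))=1$, hence equality. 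As before, the only real work is the symmetry-reduced case analysis confirming that (i) and (ii) can be met simultaneously for each of the remaining $60$ torsions; this bookkeeping, rather than any individual step, is where the proof is laborious.
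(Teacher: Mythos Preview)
Your upper-bound claim --- that for every nonzero $\tau$ at least one of the six twists $K_\tau - P - Q$ is non-effective --- fails precisely for the three flexible torsions. Take $\tau_A = (10\ 00\ 00)$: from the explicit form $K_{\tau_A} = 2A_0+2B_3+C_0+C_3 = 2A_3+2B_0+C_0+C_3$ one reads off immediately that $K_{\tau_A}-A_0-B_3$, $K_{\tau_A}-A_3-B_0$, $K_{\tau_A}-B_0-C_3$, and $K_{\tau_A}-B_3-C_0$ are effective; a direct check (e.g.\ $K_{\tau_A}-A_0-C_3 = A_0+2B_3+C_0$ and $K_{\tau_A}-A_3-C_0 = 2B_0+A_3+C_3$) shows the remaining two are as well. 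By the cyclic symmetry, the same holds for $\tau_B$ and $\tau_C$. So your restriction argument gives only $h^0(K_{\tau_A}) \le 1 + h^0(\mathcal O_{P+Q}(K_{\tau_A}))$, and since you have not bounded the degree-$4$ term by $1$, you have no bound better than $3$ for exactly the torsions where you need $h^0 = 2$.

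The approach is salvageable: for $\tau_A$ and the pair $A_0+B_3$, one has $h^0(\mathcal O_{A_0+B_3}(K_{\tau_A})) = 1$ (the special configuration of Lemma~\ref{lem: D along (A0+B3)} fails because $\tau_A\big|_{A_0[2]} = 10 \neq 00$), and a short trimming computation via Proposition~\ref{prop: trimming lemma} gives $h^0(A_0+B_3+C_0+C_3) = 1$, so $h^0(K_{\tau_A}) \le 2$ after all. But this is not what you wrote, and it requires computing an $h^0$ rather than merely testing effectiveness. The paper's proof sidesteps this by restricting to single curves $\sZ_i$ rather than to pairs, and then running a structural base-locus argument under the assumption that every $K_\tau - \sZ_i$ is effective; this avoids the large case-by-case bookkeeping you defer to the reader, and in particular handles the flexible torsions and the upper bound in one pass.
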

\begin{proof}
    Since $h^0(\tau) = 0$ and $\chi(\tau) = 1$, we have $h^2(\tau) = h^0(K_\tau) \geq 1$. If $\sZ \in \{A,B,C\}$ and $i\in \{0,3\}$, then $\sZ_i$ is an elliptic curve and $( K_\tau . \sZ_i) = 1$. From the short exact sequence
    \[
        0 \to \mathcal O_X(K_\tau - \sZ_i ) \to \mathcal O_X(K_\tau) \to \mathcal O_{\sZ_i}(K_\tau) \to 0,
    \]
    we have $h^0(K_\tau) \leq h^0(K_\tau - \sZ_i) + 1$. Thus, if there exists $\sZ_i$ such that $K_\tau - \sZ_i$ is not effective, then $h^0(K_\tau)=1$. We need to analyze when
    \begin{equation}\label{eq: flexibility theorem; assumption A}
        h^0( K_\tau - \sZ_i) \geq 1\ \text{for each $\sZ \in \{A,B,C\}$ and $i \in \{0,3\}$}.
    \end{equation}
    From now on, we assume \eqref{eq: flexibility theorem; assumption A}. Suppose there exists $\sZ_i$\,($i \in \{0,3\}$) such that $h^0( K_\tau - 2\sZ_i) > 0$. By symmetry\,(Remark~\ref{rmk: symmetry}), we may assume $\sZ_i = A_0$. Let $D_1 \in \lvert K_\tau - 2A_0 \rvert$. Then, $(D_1 . B_3) = (D_1 .C_3) = -1 < 0$, thus $\lvert D_1 \rvert$ contains $B_3 + C_3$ in its base locus. In particular, $D_2 := K_\tau - 2A_0 - B_3 - C_3$ is effective. The symmetric coordinates of $D_2$ are of the form
    \[
        \begin{array}{ c | r r r | r r r}
            \Bigl( 2 & \bigl(1\ {**}) & \bigl(0\ {**}) & \bigl(0\ {**}) & (1\ {**}) & (0\ {**}) & (0\ {**}) \Bigr)
        \end{array}.
    \]
    Consulting Table~\ref{table: symCoords}, we see that $D_2$ is linearly equivalent to either
    \[
        C_1,\ C_2,\ B_3+C_0,\ \text{or }C_3+B_0.
    \]
    In the former two cases, we have $K_\tau = D_2 + C_j = 2A_0 + B_3 + C_3 + C_j$, where $j \in \{1,2\}$. Using Proposition~\ref{prop: trimming lemma} repeatedly, we see that $h^0( K_\tau - C_0) = 0$. By \eqref{eq: flexibility theorem; assumption A}, we have $1 \leq h^0(K_\tau) = h^0( K_\tau - C_0 ) + 1$, thus $h^0(K_\tau) = 1$. In the latter two cases, we have either 
    \begin{align*}
        K_\tau &= 2(A_0+B_3) + C_0 + C_3 = K_X + (10\ 00\ 00),\ \text{or} \\
        K_\tau &= 2(A_0+C_3) + B_0 + B_3 = K_X + (00\ 00\ 10).
    \end{align*}
    In both cases, the base locus of $\lvert K_\tau \rvert$ does not contain $A_0$, hence $h^0(K_\tau) = h^0(K_\tau - A_0) + 1$, and $h^0(K_\tau - A_0) =1 $ can be deduced by applying Proposition~\ref{prop: trimming lemma}.

    It remains to understand the case when
    \begin{equation}\label{eq: flexibility theorem; assumption B}
        h^0( K_\tau - 2\sZ_i)=0\ \text{for all $\sZ \in \{A,B,C\}$ and $i \in \{0,3\}$}.
    \end{equation}
    We first deal with the case where $\tau = ({*1}\ {**}\ {**})$. By \eqref{eq: flexibility theorem; assumption A}, $K_\tau - B_3$ is effective and the $\Pic A_0$-component of its symmetric coordinates is $(0\ {*1})$, hence $\lvert K_\tau - B_3 \rvert$ contains $A_0$ in its base locus. Choose $D_3 = \sum_{\sZ}\sum_{i=0}^3 z_i \sZ_i$ in the linear system $ \lvert K_\tau - B_3 - A_0\rvert$. By the assumption \eqref{eq: flexibility theorem; assumption B}, $D_3$ contains neither $A_0$ nor $B_3$. Since the $\Pic A_0$-component of the symmetric coordinates of $D_3$ is $(1\ {*1})$, $D_3$ contains $C_i$ for suitable $i \in \{1,2\}$ and contains neither $C_{2-i}$ nor $C_3$. Similar argument works for $K_\tau - C_3$; there exists a divisor $D_4 \in \lvert K_\tau - C_3 - A_0 \rvert$ such that $D_4$ contains $C_{2-i}$, and contains neither $A_0$, $C_3$, $C_i$, nor $B_3$. Consider the short exact sequence
    \[
        0 \to \mathcal O_X(K_\tau - B_3 - A_0 ) \to \mathcal O_X(K_\tau) \to \mathcal O_{B_3+A_0}(K_\tau) \to 0.
    \]
    By Lemma~\ref{lem: D along (A0+B3)}, $h^0(\mathcal O_{B_3+A_0}(K_\tau)) = 1$. Hence, $h^0(\mathcal O_X(K_\tau)) = h^0(K_\tau - B_3 - A_0) + 1$. Moreover,
    \[
        h^0(K_\tau - B_3 - A_0) = h^0(D_3) = h^0(D_3-C_i).
    \]
    Since $(K_X \mathbin. D_3 - C_i ) = 2$, we have $h^0(D_3 - C_i) =1$. This shows that $h^0(K_\tau) = 2$. In the linear system $\lvert K_\tau \rvert$, we have found two members, namely, $A_0+B_3+D_3$ and $A_0+C_3+D_4$. These two cannot coincide since $D_3$ does not contain $C_3$. It follows that the base locus of $\lvert K_\tau \rvert $ contains $A_0$. Note that to derive this conclusion, we only used \eqref{eq: flexibility theorem; assumption A}, \eqref{eq: flexibility theorem; assumption B}, and $\tau = ({*1}\ {**}\ {**})$. Since $\tau \big\vert_{A_3[2]} = {*1}$, the same argument applies, and we see that the  base locus of $\lvert K_\tau \rvert$ also contains $A_3$. Now, $D_3$ has to contain $C_i$ and $A_3$. Moreover, $(D_3 - C_i - A_3 \mathbin. C_0)  = -1$, hence $D_3$ should contain $C_0$. As $(K_X \mathbin. D_3) = 4$, we have $D_3 = C_i + A_3 + C_0$. Similarly, $D_4 = C_{2-i} + A_3 + B_0$. Comparing symmetric coordinates, we find that
    \[
        A_0 + B_3 + C_i + A_3 + C_0 \neq A_0 + C_3 + C_{2-i} + A_3 + B_0,
    \]
    which is a contradiction.

    Up to symmetry, it remains to address the case $\tau = ({*0}\ {*0}\ {*0})$. By symmetry again, these cases reduce to $\tau = (10\ 10\ 10)$, $\tau = (00\ 10\ 10)$, and $\tau = (00\ 00\ 10)$. The case $\tau = (00\ 00\ 10)$ is already considered above, so we take care of the former two cases. If $\tau = (10\ 10\ 10)$, we have
    \[
        K_\tau = A_0 + C_3 + B_0 + A_3 + C_0 + B_3.
    \]
    If $h^0(K_\tau) > 1$, then without loss of generality, we may assume that $A_0$ is not in the base locus of $\lvert K_\tau \rvert$. Let $D_5 \in \lvert K_\tau \rvert$ be a member which does not contain $A_0$. Then, since $\Pic A_0$-component of the symmetric coordinates of $K_\tau$ is $(1\ 10)$, $D_5$ contains $C_3$, but does not contain $B_3$. Then, however, $\Pic B_3$-component of the symmetric coordinates of $K_\tau$ is $(1\ 10)$, hence $D_5$ must contain $A_0$. This leads to a contradiction, so $h^0(K_\tau) = 1$. If $\tau = (00\ 10\ 10)$, then $K_\tau = A_1 + A_2 + B_0 + B_3$. By the effectiveness test in Subsection~\ref{subsec: effectiveness test}, we deduce that $K_\tau - A_0$ is not effective. Thus, $h^0(K_\tau) = 1$. This completes the proof.
\end{proof}
%:
\begin{definition}\label{def: flexible torsion}
    A torsion element $\tau \in \Pic X$ is said to be \emph{flexible} if $h^0(K_X+\tau)=2$.
\end{definition}
%:
%:
\subsection{Classifying numerical classes}\label{subsec: num class identification} The first step to compute $h^0(D)$ is to categorize the numerical classes into several cases. First of all, we show that nef implies $\e([D])=64$ except for few cases.
%:
%:
\begin{proposition}\label{prop: exotic classes (nef but not 64-eff)}
    Assume that the numerical class $[D] = [d;a,b,c]$ contains at least one effective divisor. If $[D]$ is nef and $d  \geq 7$, then $\e([D])= 64$ except for the following cases:
    \begin{enumerate}
        \item $[2\ell;\,\ell,0,0]$;
        \item $[2\ell+1;\, \ell-1, 0,0]$;
        \item $[2\ell;\, \ell-1, 1, 0]$;
        \item $[2\ell;\, \ell-1, 0, 1]$;
        \item the classes symmetric\,(Remark~\ref{rmk: symmetry}) to (1--4).
    \end{enumerate}
\end{proposition}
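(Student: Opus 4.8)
The plan is to turn the statement into a finite combinatorial check via Proposition~\ref{prop: 1,64-eff criterion}, using that $[D]$ is nef (the setting in which the proposition is applied, $D$ being a reduced form). Set $\ell=\tfrac13(d+a+b+c)$ and $M=\max\{0,a,b,c\}$. The first step is to rewrite nefness of $[D]=[d;a,b,c]$ as explicit inequalities: one reads $(D.\sZ_0)$ directly off the symmetric coordinates, and expresses $[\sZ_1]=[\sZ_2]$ and $[\sZ_3]$ through the $\sZ_0$'s and $K_X$ using the identity $A_0+K_X=A_1+A_2+A_3+2B_0$ already used above, the equality $[A_1]=[A_2]=[C_0+A_3]$ from Remark~\ref{rmk: Some linEquiv}(1), and their cyclic images. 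Solving, one finds
\[
    (D.\sZ_0)\in\{a,b,c\},\qquad (D.\sZ_1)=(D.\sZ_2)\in\{\ell-a,\ell-b,\ell-c\},\qquad (D.\sZ_3)\in\{d+a-2\ell,\,d+b-2\ell,\,d+c-2\ell\},
\]
so $[D]$ is nef if and only if $\max\{0,2\ell-d\}\le a,b,c\le\ell$. In particular $M\le\ell\le d$, hence $\e([D])\ge1$ by Proposition~\ref{prop: 1,64-eff criterion}; and summing the lower bounds, $a+b+c\ge3(2\ell-d)$, gives $\ell\le\tfrac23d$, which for $d\ge7$ is strictly less than $d-2$.

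By the $64$-effectivity part of Proposition~\ref{prop: 1,64-eff criterion}, $\e([D])\ne64$ forces $\ell\le\max\{2,M+1\}$ or $\ell\ge d-2$, and the second alternative has just been excluded. If $M\le1$, then $\max\{2,M+1\}=2$, so $\ell\le2$ and $a+b+c=3\ell-d\le-1$, contradicting $a,b,c\ge0$. Hence $M\ge2$, so $\max\{2,M+1\}=M+1$ and, together with $M\le\ell$, $\ell\in\{M,M+1\}$. By the cyclic symmetry of Remark~\ref{rmk: symmetry} I may assume the coordinate realising $M$ is $a$, i.e. $a=M$.

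If $\ell=M$, then $b+c=2M-d$ with $b,c\ge\max\{0,2M-d\}$, which forces $b=c=0$ and $d=2M$: this is case (1). If $\ell=M+1$, then $b+c=2M+3-d$ with $b,c\ge\max\{0,2M+2-d\}$; inspecting the finitely many $d$ consistent with these bounds leaves only $d=2M+3$ (then $b=c=0$: case (2)), $d=2M+2$ (then $\{b,c\}=\{1,0\}$: cases (3) and (4)), and $d=2M+1$ (then $2M+2-d=1$ forces $b=c=1$, giving $[2M+1;M,1,1]$, which is the flip of the case-(2) class $[2M+1;M-1,0,0]$ and hence appears in (5)). With the cyclic images this is exactly the list (1)--(5), which gives the "only exceptions" assertion; the converse — that each listed class is nef and violates $\max\{3,M+2\}\le\ell\le d-3$ — is a short direct check with the criteria above. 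I expect the one genuinely non-formal ingredient to be the coordinate description of nefness in the first paragraph, resting on the identity $A_0+K_X=A_1+A_2+A_3+2B_0$ and Table~\ref{table: symCoords}; everything downstream is elementary manipulation of the inequalities of Proposition~\ref{prop: 1,64-eff criterion}.
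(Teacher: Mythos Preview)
Your proof is correct and follows essentially the same approach as the paper's: both derive the nefness conditions explicitly (the paper writes $(D.A_3)=\ell-b-c\ge0$ etc., which is the same as your $a,b,c\ge 2\ell-d$), use Proposition~\ref{prop: 1,64-eff criterion}, show $\ell\le d-3$ always holds via $2d\ge 3\ell$, and then enumerate the cases where $\ell\in\{M,M+1\}$. The only cosmetic differences are that you organize the $\ell=M+1$ case analysis by the value of $d$ while the paper organizes it by the values of $b,c$, and you explicitly dispose of $M\le1$ whereas the paper leaves this implicit in the parametrization by $\ell$.
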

\begin{proof}
    Let $\ell := \frac 13(d+a+b+c)$. Then, nefness implies that $(D.A_0)=a$, $(D.A_3)=\ell-b-c$, etc. are nonnegative. By Proposition~\ref{prop: 1,64-eff criterion}, $\e([D])=64$ if and only if
    \[
        \max\{3,M+2\} \leq \ell \leq d-3, \tag{$\star$}
    \]
    where $M = \max \{a,b,c\}$. Suppose the left hand side fails. Thus, either $\ell < M+2$ or $M=0$ and $\ell \leq 2$. But the latter case does not happen; indeed, if $M=0$ then $a=b=c=0$ by nefness, thus $d = 3l-(a+b+c) \leq 6$. Without loss of generality, assume $M=a > 0$ and $\ell \leq M+1$. By nefness, $\ell-a-b \geq 0$, hence $b \leq 1$. Similarly, $c \leq 1$. Then, $\ell-1 \leq a \leq \ell-b$, hence if $b=1$\,(or $c=1$) then $a=\ell-1$. Otherwise, $a \in \{\ell-1,\ell\}$. This leaves four possibilities
    \[
        [2\ell;\, \ell,0,0],\quad [2\ell+1;\, \ell-1, 0, 0],\quad [2\ell;\, \ell-1, 1, 0],\ \text{and}\quad [2\ell;\, \ell-1, 0, 1]
    \]
    We remark that $[2\ell-1;\, \ell-1,1,1]$ is symmetric to $[2\ell-1;\, \ell-2 ,0,0]$ via $(i \leftrightarrow 3-i)$.
    
    We claim that the right hand side of $(\star)$ always holds. Indeed, we have
    \[
        0 \leq (\ell-a-b)+(\ell-b-c)+(\ell-c-a) = 3\ell-2(a+b+c) = d-(a+b+c),
    \]
    thus $2d \geq (a+b+c)+d = 3l$. Since $d \geq 7$, we have $3d \geq 3\ell + 7 > 3(\ell+2)$, thus $d \geq \ell+3$.
\end{proof}
%:
If $\e([D])=64$ and $D-K_X$ is nef and big, then by Kawamata-Viehweg vanishing theorem, we conclude $h^0(D) = \chi(D)$ and $h^1(D)=h^2(D)=0$. However, the assumption is not always true. For instance, if $D$ is nef but not ample, then $(D.\sZ_i)=0$ for some $\sZ \in \{A,B,C\}$ and $i=0,3$, thus $(D-K_X)$ is not nef.
%:
%:
\begin{proposition}\label{prop: exotic classes: ample and 64-eff}
    Assume that $[D]$ is ample and $\e([D])=64$. If $(D-K_X)^2=0$, then up to symmetry,
    \[
        [D] = [2\ell;\, 0, 0,\ell] + [K_X].
    \]
\end{proposition}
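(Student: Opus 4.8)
The plan is to combine the numerical constraints coming from ampleness with the algebraic identity $\chi(D) = h^0(D)$ (which holds here since $\e([D])=64$ forces $h^2(D)=0$ by Lemma~\ref{lem: h0*h2=0}, and in fact one expects $h^1(D)=0$ too under these hypotheses) and the Riemann–Roch/intersection bookkeeping on $X$. Write $[D]=[d;a,b,c]$ and set $\ell := \frac13(d+a+b+c)$, so that by nefness the numerical intersection numbers are $(D.\sZ_0)=a,b,c$ and $(D.\sZ_3)=\ell-b-c,\ \ell-c-a,\ \ell-a-b$ for the three letters, while $(D.\sZ_i)$ for $i=1,2$ equals $\ell$ (these follow from Table~\ref{table: symCoords} exactly as in the proof of Proposition~\ref{prop: exotic classes (nef but not 64-eff)}). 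Ampleness means all six of these are strictly positive. First I would compute $D^2$ and $(D.K_X)=d$ in terms of $(d,a,b,c)$ — using $K_X^2 = 6$ and the known self-intersections $\sZ_0^2=\sZ_3^2=-1$, $\sZ_1^2=\sZ_2^2=0$ — to get an explicit quadratic expression; the hypothesis $(D-K_X)^2 = 0$ then reads $D^2 - 2d + 6 = 0$, a single Diophantine-type equation in $(d,a,b,c)$.

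Next I would feed in the inequality $\e([D])=64$, which by Proposition~\ref{prop: 1,64-eff criterion} is equivalent to $\max\{3,M+2\}\le \ell\le d-3$ with $M=\max\{a,b,c\}$. Together with the ampleness inequalities $a,b,c\ge 1$, $\ell\ge b+c$, $\ell\ge c+a$, $\ell\ge a+b$ (strictly), this pins $(a,b,c)$ to a very small range. The strict positivity of the three $\sZ_3$-intersections gives $a+b+c < \ell$ wait — more precisely each pair sum is $<\ell$, so adding $2(a+b+c) < 3\ell$, i.e. $(a+b+c)<d$; combined with $\ell\le d-3$ this is already restrictive. I expect that the quadratic constraint $(D-K_X)^2=0$ rewritten via $D^2 = 2\sum$(terms) collapses, after substituting $d = 3\ell-(a+b+c)$, to something forcing two of $a,b,c$ to be as small as possible (namely $0$ — but ampleness says $\ge 1$, so the resolution is that the claimed normal form $[2\ell;0,0,\ell]+[K_X]$ has $\Pic$-coordinates $(a,b,c)=(0+1,0+1,\ell+1)=(1,1,\ell+1)$, so two of them equal $1$). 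So the real target is: show $\{a,b,c\}=\{1,1,\ell'+1\}$ for the appropriate $\ell'$, up to the cyclic/flip symmetry of Remark~\ref{rmk: symmetry}.

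Concretely, after the substitution I would expand $(D-K_X)^2 = 0$ into the normalized coordinates. Setting $a'=a-1$, $b'=b-1$, $c'=c-1$ so that $[D]-[K_X] = [d-6;a',b',c']$ has $\ell'' := \frac13((d-6)+a'+b'+c') = \ell-3$, the condition $(D-K_X)^2=0$ becomes a statement about the nef class $[D-K_X]$ being on the boundary of the nef cone — equivalently, $D-K_X$ is nef but not big, so some intersection number $(D-K_X).\sZ_i$ vanishes. I would enumerate which of the six relevant curves can meet $D-K_X$ in degree $0$: since $D$ is ample, $(D.\sZ_i)\ge 1$ and $(K_X.\sZ_i)=1$ for the elliptic curves $\sZ_{0},\sZ_3$ (and $=2$ for $\sZ_1,\sZ_2$), so $(D-K_X).\sZ_i=0$ forces $(D.\sZ_i)=1$ on an elliptic curve or $(D.\sZ_i)=2$ on a genus-$2$ curve. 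Running through the cases, using nefness of $D-K_X$ to rule out negativity on the complementary curves, should leave exactly the single family $[D-K_X]=[2\ell';0,0,\ell']$ and its symmetric images.

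The main obstacle I anticipate is the case analysis in the last step: $D-K_X$ being non-big means it lies on one of potentially several faces of the nef cone of $X$, and I will need the full list of curve classes (or at least the $\sZ_i$) together with the linear equivalences of Remark~\ref{rmk: Some linEquiv} to check that all faces except the one giving $[2\ell;0,0,\ell]$ are incompatible with $[D]$ being ample and $64$-effective. Getting that enumeration clean — rather than a sprawl of subcases — is where the work lies; the preliminary quadratic manipulation and the inequality juggling from Proposition~\ref{prop: 1,64-eff criterion} should be routine.
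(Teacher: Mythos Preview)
Your plan is workable but heavier than necessary; the paper's argument is essentially a two-line observation. The paper first notes that since $D$ is ample and $(K_X.\sZ_i)=1$ for $i\in\{0,3\}$, the class $D-K_X$ is nef; and since $\e([D])=64$ with $[D]\neq[K_X]$, Proposition~\ref{prop: 1-eff and 64-eff} gives $\e([D-K_X])\geq 1$, so numerically one may write $D-K_X=\sum_{\sZ}\sum_{i\in\{0,3\}} z_i\sZ_i$ with $z_i\geq 0$. Now the key step you are circling around but not quite using: from $0=(D-K_X)^2=\sum_i z_i\,(D-K_X.\sZ_i)$ with each summand nonnegative, \emph{every} $\sZ_i$ actually appearing in the support satisfies $(D-K_X.\sZ_i)=0$. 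This yields a short chain of linear equations (e.g.\ $a_0>0$ forces $-a_0+b_3+c_3=0$; then $b_3>0$ forces $-b_3+c_0+a_0=0$, whence $c_0=c_3=0$ and $a_0=b_3$), and one reads off $[D-K_X]=[\ell(A_0+B_3)]=[2\ell;0,0,\ell]$ up to symmetry. No case enumeration of faces, no quadratic manipulation.

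Your two sketched routes both reach the answer, just with more bookkeeping. The quadratic route, done carefully, gives $(\ell-3)^2=(a-1)^2+(b-1)^2+(c-1)^2$; combined with the nefness inequalities $\ell-3\geq (a-1)+(b-1)$ etc., one gets $(c-1)^2\geq 2(a-1)(b-1)$ and cyclically, and multiplying the three forces $(a-1)(b-1)(c-1)=0$; a second pass then shows two of $a,b,c$ equal $1$. That is a clean alternative. Your ``faces of the nef cone'' approach is weaker as stated: knowing only that \emph{some} $(D-K_X.\sZ_i)=0$ gives a single linear constraint, not the full system the paper extracts from the effective support; you would have to iterate, and that is exactly what the support argument does automatically.

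Two small corrections. The intersection $(D.\sZ_i)$ for $i=1,2$ is not $\ell$: since numerically $A_1\sim C_0+A_3$, one has $(D.A_1)=c+(\ell-b-c)=\ell-b$, and similarly for the other letters. And the opening remark about $\chi(D)=h^0(D)$ is a red herring: the proposition and its proof are purely about numerical classes and use no cohomology at all --- the hypothesis $\e([D])=64$ serves only to guarantee $[D]\neq[K_X]$ and that $D-K_X$ is numerically effective.
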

\begin{proof}
    Since $(D-K_X \mathbin. \sZ_i) \geq 0$ for any $\sZ \in \{A,B,C\}$ and $i=0,3$, $D-K_X$ is nef. The statement is purely about the numerical classes, we may assume
    \[
        D-K_X = \sum_\sZ \sum_{i\in\{0,3\}} z_i \sZ_i,\quad z_i \in \ZZ_{\geq 0}.
    \]
    Suppose $a_0 > 0$. Then by nefness and $(D-K_X)^2 = 0$, $(D-K_X . A_0)= -a_0 + b_3 + c_3 = 0$. Then, either $b_3 > 0$ or $c_3 > 0$. Assume $b_3 \neq 0$. Then, $(D-K_X . B_3) = -b_3 + c_0 + a_0 = 0$. Thus we have $a_0 = b_3 =: \ell$, and $c_0 = c_3 = 0$. In particular,
    \[
        [D-K_X] = [ \ell  (A_0+B_3) ] = [2\ell;\, 0,0,\ell]. \qedhere
    \]
\end{proof}
%:
Among the classes with $\sf e$-number $64$, the strictly nef ones remain:
\begin{proposition}\label{prop: reduction of Nef 64-eff}
    Let $D \in [d;0,b,c]$ be a nef divisor such that $\e([D])=64$ and $\mathcal O_X(D) \big\vert_{A_0} = \mathcal O_{A_0}$. Assume further that $[D - (A_0+B_3+C_0)] = [d-3;0,b,c]$ has $\sf e$-number $64$. Then,
    \[
        h^0(D) = h^0(D-A_0)+1.
    \]
\end{proposition}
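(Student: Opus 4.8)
The plan is to reduce, via the structure sequence along $A_0$, to the assertion that $A_0$ is not a fixed component of $|D|$, and then to establish that by descending on $d=(D.K_X)$, peeling off a copy of the numerical class $[3;0,0,0]$ at each step.

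First the reduction. Since $\e([D-(A_0+B_3+C_0)])=64$, Proposition~\ref{prop: 1-eff and 64-eff} shows $\e\bigl([d-3;0,b,c]-[K_X]\bigr)\ge 1$; an effective divisor in that class has intersection $-1$ with $A_0$, hence contains $A_0$, and removing that copy leaves an effective divisor of intersection $d-10$ with the nef divisor $K_X$ (recall $K_X$ is nef, $X$ being a minimal surface of general type), so $d\ge 10$. In particular $(K_X-D).K_X<0$ and $(K_X-D+A_0).K_X<0$, whence $h^2(D)=h^2(D-A_0)=0$ by Serre duality. Tensoring $0\to\mathcal O_X(-A_0)\to\mathcal O_X\to\mathcal O_{A_0}\to 0$ by $\mathcal O_X(D)$ and using that $A_0$ is elliptic with $\mathcal O_X(D)|_{A_0}=\mathcal O_{A_0}$ (so $h^0(\mathcal O_{A_0}(D))=h^1(\mathcal O_{A_0}(D))=1$) gives $h^0(D)\le h^0(D-A_0)+1$, with equality exactly when the restriction $H^0(D)\to H^0(\mathcal O_{A_0})$ is onto — equivalently, when some effective divisor linearly equivalent to $D$ has support avoiding $A_0$.

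Next the descent. From $\e([D])=64$ and $[D]=[d;0,b,c]$, Proposition~\ref{prop: 1,64-eff criterion} yields $\ell\ge M+2$ with $\ell=\tfrac13(d+b+c)$, $M=\max\{0,b,c\}$; combined with the intersection formulas $(D.B_3)=\ell-c$, $(D.A_3)=\ell-b-c$, $(D.B_0)=b$, $(D.C_0)=c$ (and their cyclic images), this gives $(D.B_3)\ge 2$ and leaves only $(D.A_3),(D.B_0),(D.C_0)$ possibly zero. Now $A_3+B_0+C_0$ and $A_0+B_3+C_0$ both have numerical class $[3;0,0,0]$, so they differ by a torsion $\tau$, and Table~\ref{table: symCoords} gives $\tau|_{A_0[2]}=00$. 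Since $\e([d-3;0,b,c])=64$, the system $|D-(A_3+B_0+C_0)|=|(D-A_0-B_3-C_0)+\tau|$ is nonempty; choosing $G'$ in it, $\Delta:=(A_3+B_0+C_0)+G'$ is effective, $\sim D$, and has $A_0\notin\operatorname{supp}(A_3+B_0+C_0)$, so it remains only to show $A_0\notin\operatorname{Bs}|G'|$. Here $[G']=[d-3;0,b,c]$, $(G'.A_0)=0$, $\mathcal O_X(G')|_{A_0}=\mathcal O_{A_0}$ (each of $A_3,B_0,C_0$ restricts trivially to $A_0[2]$), and $(G'.B_3)=(D.B_3)-1\ge 1$; using the intersection formulas one checks that reducing $G'$ to reduced form (Definition~\ref{def: reduced form}) never removes $A_0$ and preserves the shape $[\,\cdot\,;0,\ast,\ast]$, and since such trimming does not affect whether a curve $\ne A_0$ is a fixed component, it suffices to treat the reduced form of $G'$. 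That divisor again satisfies the hypotheses of the proposition, with strictly smaller $(\,\cdot\,).K_X$, provided $\e([d-6;0,b,c])=64$, i.e. $d\ge 13$. Iterating pushes the problem down to $d\in\{10,11,12\}$.

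Finally the base cases. For $d\in\{10,11,12\}$ the nefness inequalities together with $\e([d-3;0,b,c])=64$ allow only finitely many numerical classes $[d;0,b,c]$; for instance for $d=10$ one is forced to $(b,c)=(1,1)$, and then $D\sim 2A_0+B_3+C_0+K_X+\sigma$ with $\sigma$ a torsion satisfying $\sigma|_{A_0[2]}=00$. For each such class $h^0(D)$ and $h^0(D-A_0)$ are computed directly, using the effectiveness test of Section~\ref{subsec: effectiveness test} and Proposition~\ref{prop: trimming lemma}, and the claimed equality is verified. I expect the bulk of the work to lie here and in making the reduced-form bookkeeping of the descent precise — in particular confirming that after trimming $G'$ one stays in a class of type $[\,\cdot\,;0,\ast,\ast]$ with $\e$-number $64$, so that the induction genuinely continues; the exact-sequence reduction itself is routine.
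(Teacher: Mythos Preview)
Your reduction to ``$A_0$ is not a fixed component of $|D|$'' via the structure sequence is fine, as is the observation that $A_3+B_0+C_0$ restricts trivially on $A_0$ so that $G'\in|D-(A_3+B_0+C_0)|$ inherits $\mathcal O_X(G')|_{A_0}=\mathcal O_{A_0}$. The trouble is in the induction step. You assert that ``$\e([d-6;0,b,c])=64$, i.e.\ $d\ge 13$,'' and then conclude that iterating lands you in $d\in\{10,11,12\}$. This equivalence is false: by Proposition~\ref{prop: 1,64-eff criterion}, $\e([d-6;0,b,c])=64$ requires $\max\{3,M+2\}\le\ell-2$, not just $d\ge 13$, and the first inequality can fail for arbitrarily large $d$. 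Concretely, take $[D]=[2m+3;0,2,m-2]$ with $m\ge 4$: here $\ell=m+1$, and one checks $\e([D])=\e([2m;0,2,m-2])=64$ so the proposition applies, yet $\e([2m-3;0,2,m-2])\ne 64$ because $\ell-2=m-1<M+2=m$. After one descent step $[G']=[2m;0,2,m-2]$ is precisely one of the classes of Proposition~\ref{prop: Nef 64-eff, exceptional cases}, and your induction hypothesis no longer applies to it. So the ``base locus'' you actually have to treat is not the finite set $d\in\{10,11,12\}$ but the entire infinite list of Proposition~\ref{prop: Nef 64-eff, exceptional cases} --- which is exactly the content of Proposition~\ref{prop: Nef 64-eff, exceptional cases, reduction}. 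Your proposal has essentially rediscovered why that separate proposition is needed, but has not supplied it.

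There is a second, softer issue: the reduced-form bookkeeping for $G'$ is more delicate than you indicate. If $(D.A_3)=0$ then $(G'.A_3)=-1$, so trimming subtracts $A_3$ and changes the numerical class to $[d-4;0,b-1,c-1]$; if $b=0$ or $c=0$ this cascades through $B_0,C_0$ and further curves, and you must verify at each stage both that $A_0$ is never subtracted and that the resulting class still has $\sf e$-number $64$ so that the induction can continue. None of this is checked.

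The paper avoids these difficulties by a direct (non-inductive) argument: it writes $D=G+D'$ with $G\in\{2A_0+B_3+C_0+K_\tau,\ 3A_0+2B_3+C_0+K_\tau\}$ chosen so that Kawamata--Viehweg applies to $G-A_0$, yielding $h^1(G-A_0)=0$ and hence surjectivity of $H^0(G)\to H^0(\mathcal O_{A_0})$; the residual $D'$ is then shown (after a normalization of $\tau$) not to have $A_0$ in its base locus by an explicit rewriting. This sidesteps the descent entirely and is what makes the argument uniform in $d$.
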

%:
\begin{proof}
    By assumption and Proposition~\ref{prop: 1-eff and 64-eff}, $[D - (A_0+B_3+C_0) - K_X]$ contains at least one effective divisor, so there exists $\tau \in \Tors \Pic X$ such that $D - (A_0 + B_3 + C_0 + K_X + \tau)$ is effective. Moreover, the intersection number of $D - (A_0+B_3 + C_0 + K_X + \tau)$ and $A_0$ is $-1$, hence there exists an effective divisor $D'$ such that
    \[
        D = 2A_0 + B_3 + C_0 + (K_X+\tau) + D'.
    \]
    We claim that $\tau$ can be chosen in such a way that $\tau\big\vert_{A_0[2]} = 00$. Suppose $\tau\big\vert_{A_0[2]} \neq 00$, then $D'$ should contain at least one of $\{C_1,C_2,C_3\}$\,(cf. Subsection~\ref{subsec: effectiveness test}).
    \begin{enumerate}
        \item If $D' \supset C_i$ for $i=1$ or $i=2$, then
        \[
            [D'] \ni (D'-C_i) + (C_0+B_3).
        \]
        Note that $C_0+B_3$ do not contain $C_1,C_2,C_3$.
        \item If $D' \supset C_3$, then, $(D'-C_3 \mathbin . A_0) < 0$, thus $D'$ also contains $A_0$. Thus,
        \[
            [D'] \ni \bigl( D'-(C_3+A_0) \bigr) + A_3+C_0
        \]
        Again, $A_3+C_0$ do not contain $C_1,C_2,C_3$.
    \end{enumerate}
    By applying (1) and (2) iteratively, we can find an effective divisor in $[D']$ containing none of $C_1,C_2,C_3$. Replacing $D'$ if necessary, we may assume that $D'$ contains none of $C_1,C_2,C_3$. Then, $A_0[2]$-component of $D'$ is $00$, as claimed.
    
    Suppose $\lvert D' \rvert$ does not contain $A_0$ in its base locus. Let $G_1 := D-D'$. Since $[G_1-A_0 -K_X] = [A_0+B_3+C_0]$ is nef and big, Kawamata-Viehweg vanishing theorem asserts $h^0(G_1-A_0) = 3$ and $h^1(G_1-A_0)=0$.
    In the short exact sequence
    \[
        0 \to \mathcal O_X(G_1-A_0) \to \mathcal O_X(G_1) \to \mathcal O_{A_0}(G_1) \to 0,
    \]
    we have $\mathcal O_{A_0}(G_1) = \mathcal O_{A_0}(\tau) = \mathcal O_{A_0}$. It follows that $h^1(\mathcal O_X(G_1))=1$, hence $h^0(G_1) = 4 > h^0(G_1-A_0)$. Therefore, the base locus of $\lvert G_1 \rvert$ does not contain $A_0$. Consequently, $\lvert D \rvert = \lvert G_1 + D' \rvert$ does not contain $A_0$ in its base locus.
    
    Now, we assume that the base locus of $\lvert D' \rvert$ contains $A_0$. We may write $D'$ as a nonnegative sum
    \[
        a_0  A_0 + b_3  B_3 + c_1 C_1 + c_2 C_2 + c_3C_3 + \dots
    \]
    The condition $\mathcal O_X(D')\big\vert_{A_0} = \mathcal O_{A_0}$ imposes the relations
    \[
        a_0 = b_3+c_1+c_2+c_3,\quad\text{and}\quad  c_1 \equiv c_2 \equiv c_3\ (\mod 2).
    \]
    But we have a linear equivalence relation (which can be checked directly using symmetric coordinates)
    \[
        C_1+C_2+C_3 + 3A_0 = A_1+A_2+A_3+ 2B_0 + C_0,
    \]
    so we may assume $c_1,c_2,c_3$ are even. Moreover, since $2C_1 = 2C_2 = 2(C_3+B_0)$, we further assume that $c_1=c_2=0$. Suppose $a_0>0$ is even. Then, $a_0 = b_3+c_3$ and $b_3,c_3$ are even, hence $A_0$ is not in the base locus of $\lvert D' \rvert$. This contradicts our assumption. Thus, $a_0$ is odd, and so is $b_3$. Then, 
    \begin{align*}
        D'' :={}& D'-A_0-B_3 \\
        ={}& \frac{b_3-1}{2} \cdot 2(A_0+B_3) + \frac{c_3}{2} \cdot 2(A_0+C_3) + \dots
    \end{align*}
    does not contain $A_0$ in its base locus. Let $G_2 := D - D'' = 3A_0 + 2B_3 + C_0 + K_X + \tau$. In the short exact sequence
    \[
        0 \to \mathcal O_X(G_2-A_0) \to \mathcal O_X(G_2) \to \mathcal O_{A_0} \to 0,
    \]
    $h^1(G_2-A_0)=h^2(G_2-A_0) = 0$ by Kawamata-Viehweg vanishing theorem. Then, $H^0(\mathcal O_X(G_2)) \to H^0(\mathcal O_{A_0})$ is surjective, i.e. $\lvert G_2\rvert$ does not contain $A_0$ in its base locus. Consequently, $D = G_2 + D''$ and the base loci of $G_2$ and $D''$ do not contain $A_0$, hence $D$ does not contain $A_0$ in its base locus. Now, the result follows from the short exact sequence
    \[
        0 \to \mathcal O_X(D-A_0) \to \mathcal O_X(D) \to \mathcal O_{A_0} \to 0. \qedhere
    \]
\end{proof}
Next step is to identify which classes $[D]$ fail to fulfill the hypotheses of Proposition~\ref{prop: reduction of Nef 64-eff}. Before proceed to the statement, we need to introduce a lemma:
%:
\begin{lemma}\label{lem: Nef 64-eff}
    Suppose that $[D] = [d;0,b,c]$ satisfies $d \geq 9$ and $\ell := \frac 13(d+b+c) \geq 4$. Assume $[D]$ is nef and $\e([D])=64$. We have $(D.B_3) \geq 2$, $(D.C_3)\geq 2$, and $(D.B_3) + (D.C_3) \geq 5$. Moreover, $d \geq \ell +5$.
\end{lemma}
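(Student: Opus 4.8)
The plan is to translate the $64$-effectivity hypothesis and nefness into linear inequalities in the coordinates $[d;a,b,c]$ and $\ell$, and then to read off the four assertions directly. Put $M := \max\{0,b,c\}$. Since $a=0$, the identities used in the proof of Proposition~\ref{prop: exotic classes (nef but not 64-eff)} give, on the numerical class,
\[
    (D.B_0)=b,\quad (D.C_0)=c,\quad (D.A_3)=\ell-b-c,\quad (D.B_3)=\ell-c,\quad (D.C_3)=\ell-b,
\]
together with $d=3\ell-b-c$ (and $\ell\in\ZZ$, since $d+b+c$ is divisible by $3$ by Theorem~\ref{thm: PicX and EffX}). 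Nefness of $D$ is then precisely the system $b,c,\ell-b,\ell-c,\ell-b-c\ge 0$.

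First I would establish $(D.B_3)\ge 2$ and $(D.C_3)\ge 2$. Because $\e([D])=64$ and $d\ge 9\ge 7$, the inequality \eqref{eq: 64-eff} of Proposition~\ref{prop: 1,64-eff criterion} yields $\ell\ge M+2$, hence $\ell\ge b+2$ and $\ell\ge c+2$; these are exactly $(D.C_3)=\ell-b\ge 2$ and $(D.B_3)=\ell-c\ge 2$.

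For the last two assertions I would argue by contradiction. As $(D.B_3),(D.C_3)\ge 2$, the inequality $(D.B_3)+(D.C_3)\le 4$ would force $(D.B_3)=(D.C_3)=2$, i.e.\ $b=c=\ell-2$. Substituting into the nefness inequality $(D.A_3)=\ell-b-c\ge 0$ gives $\ell\le 4$, and then $d=3\ell-b-c=\ell+4\le 8$, contradicting $d\ge 9$. Therefore $(D.B_3)+(D.C_3)\ge 5$; and since $d=3\ell-b-c=\ell+(\ell-c)+(\ell-b)=\ell+(D.B_3)+(D.C_3)$, we obtain $d\ge\ell+5$.

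Once Proposition~\ref{prop: 1,64-eff criterion} is in hand the argument is elementary, and there is no real obstacle; the only point to be careful about is that the borderline configuration $(D.B_3)=(D.C_3)=2$ pins $d$ down to $8$, which is exactly why the hypothesis has to be $d\ge 9$ (and not merely $d\ge 7$ or $d\ge 8$)---everything else is bookkeeping with the nefness inequalities.
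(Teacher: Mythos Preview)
Your proof is correct and follows essentially the same approach as the paper: both translate $\e([D])=64$ via Proposition~\ref{prop: 1,64-eff criterion} into $\ell\ge M+2$ to get $(D.B_3),(D.C_3)\ge 2$, and both rule out the borderline case $b=c=\ell-2$ using nefness of $(D.A_3)$ together with $d\ge 9$. The one minor organizational difference is that you observe $d-\ell=(\ell-b)+(\ell-c)=(D.B_3)+(D.C_3)$, so the inequality $d\ge\ell+5$ is an immediate corollary of $(D.B_3)+(D.C_3)\ge5$; the paper instead rewrites $d-\ell=\ell+(D.A_3)\ge 4$ and separately excludes the equality case, which amounts to the same computation.
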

%:
\begin{proof}
    Let $M := \max\{b,c\}$. Since $[D]$ has $\sf e$-number $64$,
    \[
        M+2 \leq \ell \leq d-3.
    \]
    Thus, $(D. B_3) = \ell-c \geq 2$ and $(D. C_3) = \ell-b \geq2 $.
    Assume $\ell = b+2 = c+2$. Then, $(D.A_3) \geq 0$ implies $b+c \leq \ell $, so $b,c \leq 2$. Since we assumed $\ell\geq 4$, $b=c=2$ and $\ell=4$. Then $d = 3\ell - b - c = 8 < 9$, a contradiction. This shows that at least one of $(D. B_3)$ or $(D. C_3)$ is larger than $2$.
    
    It remains to show $d - \ell \geq 5$. We have
    \[
        d - \ell = 2\ell -b - c = \ell + (D. A_3) \geq 4.
    \]
    The equality holds if and only if $(D. A_3)=0$ and $\ell=4$, but it is impossible as $d \geq 9$. 
\end{proof}
%:
\begin{proposition}\label{prop: Nef 64-eff, exceptional cases}
    Let $d \geq 9$, and let $[D] = [d;0,b,c]$ be a nef divisor class such that $\e([D])=64$ but $\e([D- (A_0+B_3+C_0)]) \neq 64$. Then, up to symmetry, $[d;0,b,c]$ is one of the following:
    \begin{enumerate}
        \item $[2\ell;\,0,2,\ell-2]$ or $[2\ell;\,0,\ell-2,2]$;
        \item $[2\ell+1;\,0,1,\ell-2]$ or $[2\ell+1;\,0,\ell-2,1]$;
        \item $[2\ell+2;\,0,0,\ell-2]$;
        \item $[9;0,0,0]$;
    \end{enumerate}
\end{proposition}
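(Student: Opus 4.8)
The plan is to translate both hypotheses into numerical inequalities through Proposition~\ref{prop: 1,64-eff criterion}, applied once to $[D]$ and once to $[D']:=[D-(A_0+B_3+C_0)]=[d-3;0,b,c]$. Writing $\ell:=\frac13(d+b+c)$ and $M:=\max\{b,c\}$, note that $[D']$ has the same $M$ but with $\ell$ replaced by $\ell-1$ and $d$ by $d-3$. Then $\e([D])=64$ becomes $\max\{3,M+2\}\le\ell\le d-3$, while $\e([D'])\ne 64$ becomes the disjunction ``$d\le 9$, or $\ell\le\max\{3,M+2\}$, or $\ell\ge d-4$''. Since $\e([D])=64$ already forces $\ell\ge\max\{3,M+2\}$, the middle alternative collapses to $\ell=\max\{3,M+2\}$. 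Throughout I will also use the nef intersection numbers $(D.A_0)=0$, $(D.B_0)=b$, $(D.C_0)=c$, $(D.A_3)=\ell-b-c$, $(D.B_3)=\ell-c$, $(D.C_3)=\ell-b$, all nonnegative.

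First I would peel off the degenerate possibilities. If $\ell=3$, then $M\le 1$ and $d=9-(b+c)\ge 9$ force $b=c=0$, so $[D]=[9;0,0,0]$, which is item~(4); hence I may assume $\ell\ge 4$. Next, if $M=0$, then $b=c=0$ by nefness, so $[D]=[3\ell;0,0,0]$ with $\ell\ge 4$, and a direct check with Proposition~\ref{prop: 1,64-eff criterion} shows $\e([D'])=64$, contradicting the hypothesis; hence $M\ge 1$ and $\max\{3,M+2\}=M+2$. Now $\ell\ge 4$ lets me invoke Lemma~\ref{lem: Nef 64-eff}, which gives $\ell\le d-5$; in particular the alternative $\ell\ge d-4$ is impossible. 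The alternative $d=9$, combined with $\ell\le d-5=4$ and $\ell\ge 4$, gives $\ell=4$, whence $b+c=3\ell-d=3$ forces $M=2=\ell-2$, i.e. again $\ell=M+2$. So under all the hypotheses I am reduced to the single numerical condition $\ell=M+2$ (with $M\ge 2$, $d\ge 9$, nefness, and $\ell\le d-5$).

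It then remains to enumerate the classes $[d;0,b,c]$ with $\ell=M+2$. I would split according to whether $b\ge c$ or $c\ge b$; these cannot both equal $M$, since by Lemma~\ref{lem: Nef 64-eff} one has $b+c=3\ell-d\le 2\ell-5=2M-1<2M$. In the case $b=M=\ell-2$, nefness $(D.A_3)=\ell-b-c=2-c\ge 0$ forces $c\le 2$, and running through $c\in\{0,1,2\}$ with $d=3\ell-b-c=2\ell+2-c$ yields $[2\ell+2;0,\ell-2,0]$, $[2\ell+1;0,\ell-2,1]$, $[2\ell;0,\ell-2,2]$; the case $c=M$ is symmetric and yields $[2\ell+2;0,0,\ell-2]$, $[2\ell+1;0,1,\ell-2]$, $[2\ell;0,2,\ell-2]$. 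Collecting, the two classes with the small coordinate $2$ form item~(1), those with small coordinate $1$ form item~(2), and $[2\ell+2;0,\ell-2,0]$ and $[2\ell+2;0,0,\ell-2]$ are interchanged by the cyclic symmetry $[d;a,b,c]\mapsto[d;b,c,a]$ of Remark~\ref{rmk: symmetry}, so up to symmetry they give item~(3). If desired one also checks the converse---that each listed class genuinely satisfies both hypotheses---but only the stated direction is claimed.

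The substantive input is Lemma~\ref{lem: Nef 64-eff}: it is precisely the bound $\ell\le d-5$ that kills the alternative $\ell\ge d-4$ (and excludes $b=c$), thereby keeping the case list finite; without it one would have to track the extra families with $\ell\in\{d-4,d-3\}$. Everything else is routine but bookkeeping-heavy---identifying $(D.A_3)\ge 0$ as the binding inequality in each branch, matching each triple $(d;0,b,c)$ to its item, and invoking only the cyclic symmetry (never the forbidden swap of letters) when collapsing the $(b,c)\leftrightarrow(c,b)$ redundancy, which is legitimate only because one of the coordinates there is $0$.
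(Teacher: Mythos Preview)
Your proof is correct and follows essentially the same route as the paper's: translate both $\e$-conditions via Proposition~\ref{prop: 1,64-eff criterion}, invoke Lemma~\ref{lem: Nef 64-eff} to eliminate the $\ell \geq d-4$ alternative, reduce to $\ell = M+2$, and enumerate using $(D.A_3)\geq 0$. Your treatment of the edge cases ($\ell=3$, $M=0$, $d=9$) is in fact slightly more explicit than the paper's. One cosmetic slip: the cyclic symmetry taking $[2\ell+2;0,\ell-2,0]$ to $[2\ell+2;0,0,\ell-2]$ is $[d;a,b,c]\mapsto[d;c,a,b]$ rather than $[d;a,b,c]\mapsto[d;b,c,a]$, though both generate the same cyclic group so the conclusion is unaffected.
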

\begin{proof}
    Let $M := \max\{0,b,c\}$ and $\ell := \frac 13(d+b+c)$. By Proposition~\ref{prop: 1,64-eff criterion},
    \[
        \max\{3,M+2\} \leq \ell \leq d-3
    \]
    holds, but either $d -3 < 7$ or the inequalities
    \[
        \max\{3,M+2\} \leq \ell-1 \leq d-6 	\tag{$\star$}
    \]
    fail. Suppose $\ell \geq 4$. Then the inequality at the right of ($\star$) is true by Lemma~\ref{lem: Nef 64-eff}, thus it fails only if $M \geq \ell-2$. Since $\e([D])=64$, we have $M = \ell-2$. Assume $c = \ell-2$. Then, $\ell-b-c \geq 0$ implies that $b \leq 2$. Similarly, if $b=\ell-2$ then $c\leq2$. By Lemma~\ref{lem: Nef 64-eff}, $[d;0,b,c]$ belongs\,(up to symmetry) to list (1--3). It remains to consider the case $\ell=3$. As $d = 3\ell - b- c \geq 9$, we have $b=c=0$ and $d=9$.
\end{proof}
%:
\begin{proposition}\label{prop: Nef 64-eff, exceptional cases, reduction}
    Let $d \geq 9$ and let $[d;0,b,c]$ be one of the classes listed in Proposition~\ref{prop: Nef 64-eff, exceptional cases}. Suppose $D \in [d;0,b,c]$ is of the reduced form. Then,
    \[
        h^0(D) = h^0(D-A_0) + 1.
    \]
\end{proposition}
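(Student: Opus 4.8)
The plan is to reduce the assertion to producing a single effective divisor $E$ with $E\sim D$ and $A_0$ not a component of $E$. Since $D$ is of reduced form and $(D.A_0)=0$, Definition~\ref{def: reduced form} forces the $A_0[2]$-component of $D$ to be $00$, so $\mathcal O_X(D)\big\vert_{A_0}\cong\mathcal O_{A_0}$ and $h^0(\mathcal O_{A_0}(D))=1$. The short exact sequence
\[
    0\to\mathcal O_X(D-A_0)\to\mathcal O_X(D)\to\mathcal O_{A_0}(D)\to0
\]
then gives $h^0(D)=h^0(D-A_0)+\dim\bigl(\operatorname{image}(H^0(D)\to H^0(\mathcal O_{A_0}(D)))\bigr)$, with the last term $\le 1$. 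If $E$ is as above with defining section $s$ (regarded, via $\mathcal O_X(E)\cong\mathcal O_X(D)$, as an element of $H^0(\mathcal O_X(D))$), then $s\big\vert_{A_0}$ is a nonzero section of $\mathcal O_{A_0}(D)$ because its zero divisor $E$ does not contain $A_0$; hence the term equals $1$ and $h^0(D)=h^0(D-A_0)+1$.

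To build $E$, I would first read off from Table~\ref{table: symCoords} the numerical identity $[D]=\ell[A_3]+(\ell-b)[B_0]+(\ell-c)[C_0]$, where $\ell=\frac{1}{3}(d+b+c)$; here $\ell-b=(D.C_3)\ge0$ and $\ell-c=(D.B_3)\ge0$ by nefness, and $\ell\ge3$ since $d\ge9$. Put $E_0:=\ell A_3+(\ell-b)B_0+(\ell-c)C_0$, an effective divisor with $A_0$ not a component and $[E_0]=[D]$, so $\tau:=D-E_0\in\Tors\Pic X$. As each of $A_3,B_0,C_0$ restricts trivially to $A_0[2]$, we get $\tau\big\vert_{A_0[2]}=00$, i.e. $\tau$ lies in the order-$16$ subgroup $T_0=\{\sigma\in\Tors\Pic X:\sigma\big\vert_{A_0[2]}=00\}$. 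Next I would use the linear equivalences (immediate from Table~\ref{table: symCoords})
\[
    A_1=A_3+C_0+(00\ 11\ 00),\quad A_2=A_3+C_0+(00\ 01\ 00),\quad B_1=A_3+B_0+(00\ 10\ 01),\quad B_2=A_3+B_0+(00\ 10\ 11):
\]
replacing a summand $A_3+C_0$ of $E_0$ by $A_1$ or $A_2$, and a summand $A_3+B_0$ by $B_1$ or $B_2$, alters the class of $E_0$ by one of these four torsions — which generate $T_0$ — while keeping $A_0$ off the support. A direct combinatorial check in $T_0\cong(\ZZ/2)^4$ shows every $\tau\in T_0$ is a sum of at most two $C_0$-type and at most two $B_0$-type torsions, hence realized by consuming at most two copies of $C_0$, two of $B_0$, and four of $A_3$.

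It remains to check $E_0$ has enough summands of each kind. In every family of Proposition~\ref{prop: Nef 64-eff, exceptional cases} one has $\ell-b\ge2$ and $\ell-c\ge2$, so $E_0$ has $\ge2$ copies of each of $B_0$ and $C_0$; and for families (1)--(3) one has $\ell\ge4$, giving $\ge4$ copies of $A_3$, so any $\tau\in T_0$ can be absorbed and $E$ exists. The tight case is $[D]=[9;0,0,0]$, where $\ell=3$ leaves only three copies of $A_3$; but there $(D.A_0)=(D.B_0)=(D.C_0)=0$, so reduced form forces the $A_0[2]$-, $B_0[2]$- and $C_0[2]$-components of $D$ to vanish, pinning $\tau=(00\ 10\ 00)$ — realized by replacing two summands $A_3+C_0$ by $A_1$ and $A_2$, using only two copies each of $A_3$ and $C_0$ (so $E=A_1+A_2+A_3+3B_0+C_0$). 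The classes symmetric to (1)--(4) are handled identically via Remark~\ref{rmk: symmetry}. The main obstacle is exactly this bookkeeping: verifying, family by family, that the reduced-form constraints together with $d\ge9$ always leave enough of each generator in $E_0$ to absorb the torsion discrepancy $\tau$.
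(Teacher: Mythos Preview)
Your argument is correct and takes a genuinely different route from the paper's. The paper proceeds case by case: for each family it fixes a different base divisor (built from $A_0+B_3$ or $A_0+C_3$), reads off the reduced-form constraint on the associated torsion, writes down explicit tables of members of $\lvert D\rvert$ avoiding $A_0$ for the initial values $\ell=4,5$, and then pushes to larger $\ell$ by the induction step $\ell\mapsto\ell+2$ via $\lvert 2(A_0+B_3)\rvert$. Your approach is uniform: starting from the single $A_0$-free representative $E_0=\ell A_3+(\ell-b)B_0+(\ell-c)C_0$ and observing that the four swap torsions form a $\ZZ/2$-basis of $T_0$, you absorb any $\tau\in T_0$ with at most one use of each swap, so the explicit tables and the induction disappear entirely. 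The trade-off is that the paper's tables, while tedious, are self-verifying line by line, whereas your argument concentrates the work in the linear-algebra check that the four torsions are independent in $T_0$ and in the bookkeeping that $d\ge 9$ supplies $\ge 4$ copies of $A_3$ and $\ge 2$ of each of $B_0,C_0$ (with the single tight case $[9;0,0,0]$ handled by the extra reduced-form constraints, exactly as you do). Both arrive at the same explicit witness $A_1+A_2+A_3+3B_0+C_0$ for $[9;0,0,0]$.
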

\begin{proof}
    First, we treat the simplest case: $[D] = [9;0,0,0]$. The class contains a unique reduced form: $D = A_0+B_0+C_0 + K_X$. It can be easily verified that $D = A_1+A_2+A_3+3B_0+C_0$, so $\lvert D \rvert$ does not contain $A_0$ as a base locus. 
    
    We observe that $[2\ell+1;0,1,\ell-2]$ and $[2\ell+2;0,0,\ell-2]$ can be obtained by adding $[B_0]$ to $[2\ell;0,2,\ell-2]$ once and twice, respectively. Let $[d;0,b,c] = [2\ell,0,2,\ell-2]$. Then, $D \in [d;0,b,c]$ can be written as the following general form
    \[
        D = \ell (A_0+B_3) - 2B_0 + 2C_0 + \tau.
    \]
    We have $\mathcal O_X(D)\big\vert_{A_i} = \mathcal O_{A_i}(\tau)$ for $i=0,3$, so $D$ is of reduced form if and only if $\tau = (00\ {*0}\ {**})$\,(cf. \eqref{eq: torsion dependency}). Suppose we have the result (that the base locus of $D$ does not contain $A_0$) for smaller $\ell$. Then, we have the same result for $\ell+2$ since $\lvert 2(A_0+B_3) \rvert$ does not contain $A_0$ in its base locus. Hence, it suffices to prove for $\ell =4$ and $\ell=5$.
    
    \begin{description}
        \item[Case 1] $[d;0,b,c]=[8;0,2,2]$. The following table exhibits explicit members in $\lvert D \rvert = \lvert 4 (A_0+B_3) - 2B_0 + 2C_0 + \tau\rvert$.
        \[
            \begin{array}{c|c c c|c}
                \tau & D & \qquad & \tau  & D \\ \cline{1-2}\cline{4-5}
                (00\ 00\ 00) & 2A_2 + 2B_1 & & (00\ 10\ 00) & A_1+A_2+2B_1 \\
                (00\ 00\ 01) & A_1+A_2+A_3+B_0+B_1 & & (00\ 10\ 01) & 2A_2 +A_3 + B_0+B_1 \\
                (00\ 00\ 11) & A_1+A_2+A_3+B_0+B_2 & & (00\ 10\ 11) & 2A_2 +A_3 + B_0+B_2 \\
                (00\ 00\ 10) & B_1+B_2+ 2A_2 & & (00\ 10\ 10) & A_1+A_2+B_1+B_2
            \end{array}
        \]%\vspace{-\baselineskip}\captionof{table}{Reduced forms in $[8;0,2,2]$}
        None of them contains $A_0$, thus $\lvert D \rvert$ does not contain $A_0$ in its base locus.
%:
        \item[Case 2] $[d;0,b,c]=[10;0,2,3]$. In this case, we consider $\lvert D \rvert = \lvert 5 (A_0+B_3) - 2B_0 + 2C_0 + \tau\rvert$.
        \[
            \begin{array}{c|c c c|c}
                \tau & D &\qquad & \tau & D \\ \cline{1-2}\cline{4-5}
                (00\ 00\ 00) & A_1+A_2+A_3+B_0+B_1+B_2 & & (00\ 10\ 00) & 2A_2+A_3+B_0+B_1+B_2\\
                (00\ 00\ 01) & 3B_2 + 2A_2 & & (00\ 10\ 01) & A_1 + A_2 + 3B_2 \\
                (00\ 00\ 11) & B_1+ 2B_2 + 2A_2 & & (00\ 10\ 11) & A_1+A_2+B_1+2B_2 \\
                (00\ 00\ 10) & A_1+A_2+A_3+B_0+2B_1 & & (00\ 10\ 10) &  2A_2+A_3+B_0+2B_1
            \end{array}
        \]%\vspace{-\baselineskip}\captionof{table}{Reduced forms in $[10;0,2,3]$}
        From the table, we read that $A_0$ is not in the base locus.
%:
    \end{description}
%:
    This completes the proof for $[D] = [2\ell;0,2,\ell-2]$. We further remark that all the chosen members in $\lvert D \rvert$ contain $A_2$. For an effective divisor containing $A_2$, we can add $\tau = (00\ 01\ 00) = A_2 - (C_0+A_3)$ to replace $A_2$ by $(C_0+A_3)$. Hence, the results obtained so far (that $\lvert D \rvert $ does not contain $A_0$ in its base locus) also apply to $\tau$ of the form $(00\ {**}\ {**})$. 
    
    Next, we consider 
    \[
        D = \ell(A_0+B_3) - B_0 + 2C_0 + \tau \in [2\ell+1;\,0,1,\ell-2].
    \]
    In this case, $\mathcal O_{A_0}(D) = \mathcal O_{A_0}(\tau)$ and $(D. A_3) = 1 > 0$. We have $\tau = (00\ {**}\ {**})$; otherwise, $D$ is not reduced. By the previous arguments, $D$ can be written as a sum of $D' \in \lvert \ell(A_0+B_3) - 2B_0 + 2 C_0 + \tau \rvert$ and $B_0$. We have already seen that $\lvert D'\rvert$ does not contain $A_0$ in its base locus, hence neither does $\lvert D \rvert$.
    Same argument applies to $D = \ell(A_0+B_3) + 2C_0 + \tau \in [2\ell+2;\,0,0,\ell-2]$.
    
    It remains to consider the classes $[2\ell+k;\, 0,\ell-2,2-k]$ for $k=0,1,2$. The case $\ell=4$, $k=0$ overlaps with {\sf Case 1} mentioned above. Suppose $(\ell,k)=(5,0)$. We set
    \[
        D = 5 (A_0 + C_3) - 2 C_3 + 2 B_3 + \tau
    \]
    In terms of symmetric coordinates,
    \[
        D = \Bigl( 
        \begin{array}{c | ccc | ccc}
            10 & (0\ 10) & (3\ 00) & (2\ 00) & (0\ 00) & (3\ 10) & (2\ 00)
        \end{array} \Bigr) + \tau.
    \]
    To have the reduced form, we assume $\tau = (10\ {*1}\ {**})$.
    \[
        \begin{array}{c|c c c|c}
            \tau & D &\qquad & \tau & D \\ \cline{1-2}\cline{4-5}
            (10\ 01\ 00) & A_1+2A_2+2B_1 & & (10\ 11\ 00) & 3A_2+2B_1  \\
            (10\ 01\ 01) & 3A_2 +A_3 + B_0 + B_1 & & (10\ 11\ 01) & A_1 + 2A_2 +A_3 + B_0 + B_1 \\
            (10\ 01\ 11) & 3A_2 +A_3 + B_0 + B_2 & & (10\ 11\ 11) & A_1 + 2A_2 +A_3 + B_0 + B_2 \\
            (10\ 01\ 10) & A_1 + 2A_2 + B_1 +B_2 & & (10\ 11\ 10) & 3 A_2 + B_1 +B_2 \\
        \end{array}
    \]
    Again, none of above contains $A_0$, thus $\lvert D \rvert$ do not contain $A_0$ in its base locus. On the other hand, each of the representatives contains $A_2$, thus swapping $A_2 \leftrightarrow A_3+C_0$, we extend the result to $\tau = (10\ {**}\ {**})$. The remaining cases are done by the similar arguments applied to $[2\ell+1;\, 0,\ell-2,1] = [2\ell;\, 0, \ell-2,2] + [C_0]$ and $[2\ell+2;\, 0,\ell-2,0] = [2\ell;\, 0, \ell-2,2] + [2C_0]$
%:
\end{proof}
%:
Combining Proposition~\ref{prop: reduction of Nef 64-eff} and Proposition~\ref{prop: Nef 64-eff, exceptional cases, reduction}, we obtain the following corollary.
\begin{corollary}\label{cor: reduction of nef and 64-eff}
    Let $d \geq 9$. Assume $D \in [d;\,0,b,c]$ is nef and $\e([D])=64$. If $\mathcal O_{A_0}(D) = \mathcal O_{A_0}$, then
    \[
        h^0(D) = h^0(D-A_0)+1.
    \]
\end{corollary}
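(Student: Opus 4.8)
The plan is to prove the corollary as a dichotomy on whether the class $[D-(A_0+B_3+C_0)]$ still has $\sf e$-number $64$. First I would record that $[A_0+B_3+C_0]=[3;0,0,0]$ (read off Table~\ref{table: symCoords}), so that $[D-(A_0+B_3+C_0)]=[d-3;0,b,c]$, and that $\mathcal O_{A_0}(D)=\mathcal O_{A_0}$ forces $(D.A_0)=0$, hence $[D]=[d;0,b,c]$ is of the shape addressed by the preceding propositions. If $\e([d-3;0,b,c])=64$, then $D$ meets every hypothesis of Proposition~\ref{prop: reduction of Nef 64-eff} (nef, $\e([D])=64$, $\mathcal O_X(D)\big\vert_{A_0}=\mathcal O_{A_0}$, and $[d-3;0,b,c]$ of $\sf e$-number $64$), and I would simply quote that proposition for the conclusion, which in this case requires no reducedness. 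If instead $\e([d-3;0,b,c])\neq 64$, then since $d\geq 9$ I would appeal to Proposition~\ref{prop: Nef 64-eff, exceptional cases}, which places $[D]$ in one of its four exceptional families; for $D$ of reduced form the desired equality is literally Proposition~\ref{prop: Nef 64-eff, exceptional cases, reduction}, so the remaining work is to remove the reducedness hypothesis.

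This is where the care goes. Since $\mathcal O_{A_0}(D)=\mathcal O_{A_0}$ only controls the $A_0[2]$-component, $D$ may fail to be reduced along a second curve: in the family $[2\ell;0,2,\ell-2]$ the curve $A_3$ also satisfies $(D.A_3)=0$, in $[2\ell+2;0,0,\ell-2]$ the curve $B_0$ does, and in $[9;0,0,0]$ both $B_0$ and $C_0$ do; in the family $[2\ell+1;0,1,\ell-2]$ the curve $A_0$ is the only member of $\{\sZ_i\}_{i=0,3}$ of zero intersection with $D$, so there $\mathcal O_{A_0}(D)=\mathcal O_{A_0}$ already makes $D$ reduced. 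If $\sZ_i\neq A_0$ is such an extra zero-intersection curve with nonzero $\sZ_i[2]$-component, then — since each of $A_3,B_0,C_0$ is disjoint from $A_0$ and the $\Pic\sZ_i$-entry of $A_0$ in Table~\ref{table: symCoords} vanishes — Proposition~\ref{prop: trimming lemma}(2) applies to $D$ and to $D-A_0$ simultaneously, giving $h^0(D)=h^0(D-\sZ_i)$ and $h^0(D-A_0)=h^0(D-A_0-\sZ_i)$. Hence it suffices to prove the identity for $D-\sZ_i$. I would then check that $D-A_3$ (coming from the first family) and $D-B_0$ (from the third) land in the family $[2\ell+1;0,1,\ell-2]$ with $d$ still $\geq 9$ — hence reduced, so Proposition~\ref{prop: Nef 64-eff, exceptional cases, reduction} finishes those. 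The one case not reached this way is $[D]=[9;0,0,0]$ with $D$ not reduced, i.e.\ $D=A_0+B_0+C_0+K_X+\tau$ for one of the finitely many torsions $\tau$ with $\tau\big\vert_{A_0[2]}=00$; for each such $\tau$ I would exhibit directly, via the effectiveness test of Subsection~\ref{subsec: effectiveness test}, an effective divisor linearly equivalent to $D$ and not containing $A_0$, which gives $h^0(D)=h^0(D-A_0)+1$.

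The main obstacle is precisely this last reduction: pinning down for which exceptional classes $D$ can be non-reduced, verifying that peeling the offending curve keeps one inside the list of Proposition~\ref{prop: Nef 64-eff, exceptional cases} (with $d\geq 9$ preserved and the result reduced), and dispatching the finitely many torsion twists of $[9;0,0,0]$ by hand. Everything else is just the conjunction of Propositions~\ref{prop: reduction of Nef 64-eff}, \ref{prop: Nef 64-eff, exceptional cases}, and \ref{prop: Nef 64-eff, exceptional cases, reduction}.
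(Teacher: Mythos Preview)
Your dichotomy is exactly the paper's proof, which is the single sentence ``Combining Proposition~\ref{prop: reduction of Nef 64-eff} and Proposition~\ref{prop: Nef 64-eff, exceptional cases, reduction}, we obtain the following corollary.'' You go further than the paper on one point: you notice that Proposition~\ref{prop: Nef 64-eff, exceptional cases, reduction} requires $D$ to be of reduced form while the corollary as stated only assumes $\mathcal O_{A_0}(D)=\mathcal O_{A_0}$, and you patch this via the trimming lemma. The paper glosses over this, since in Algorithm~\ref{Main Algorithm} the corollary is only ever invoked after step~(2) has already put $D$ in reduced form; your patch is nonetheless correct and makes the corollary self-contained. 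One small addition: the ``up to symmetry'' clause in Proposition~\ref{prop: Nef 64-eff, exceptional cases} also places $[2\ell+2;\,0,\ell-2,0]$ in family~(3), where the extra zero-intersection curve is $C_0$ rather than $B_0$, and the same trimming argument applies.
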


%:
\begin{remark}
    Later, we will see that the condition $d \geq 9$ in Corollary~\ref{cor: reduction of nef and 64-eff} is optimal. For instance,
    \[
        D:= A_0 + B_0 + K_X + (B_2-B_1) = A_0+B_3 + 2(A_0+B_3+C_0)
    \]
    has the symmetric coordinates
    \[
        \begin{array}{ c | r r r | r r r}
            \Bigl( 8 & \bigl(0\ 00) & \bigl(0\ 00) & \bigl(1\ 10) & (2\ 00) & (2\ 10) & (3\ 00) \Bigr)
        \end{array},
    \]
    thus it is nef, has $\sf e$-number $64$, and is of the reduced form. However, $h^0(D) = h^0(D-A_0)=3$\,(cf. Proposition~\ref{prop: A0+KX [7;011]} and Proposition~\ref{prop: A0+B0+KX [8;001]}), thus $A_0$ is in the base locus of $D$.
\end{remark}
%:
Using Proposition~\ref{prop: 1-eff and 64-eff}, one can easily identify nef classes with $\sf e$-number $64$ and $d=7,8$: up to symmetry, they are $[A_0+K_X] = [7;\, 0,1,1]$, $[A_0+B_0+K_X] = [8;\,0,0,1]$, $[A_0+A_3+K_X]=[8;\,0,2,2]$, and $[A_0+B_3+K_X] = [8;\, 1,1,2]$. Note that the class $[8;\,1,1,2] = [2;\,0,0,1] + [K_X]$ is considered in Proposition~\ref{prop: exotic classes: ample and 64-eff}, hence we do not consider it separately. It remains to identify the nef and effective divisors with $d \leq 6$.
\begin{proposition}\label{prop: small degree classification}
    Let $D \in [d;a,b,c]$ be a nef divisor such that $\e([D]) \geq 1$ and $d \leq 6$. Then, up to symmetry, either $D$ belongs to the list in Proposition~\ref{prop: exotic classes (nef but not 64-eff)}, $D \in [6;\,0,0,0]$, or $D \in [K_X]$.
\end{proposition}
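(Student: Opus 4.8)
The plan is to rephrase ``$D$ nef and effective'' as a system of inequalities on the numerical coordinates $[d;a,b,c]$ and then finish by a finite enumeration bounded by an a priori estimate on $\ell := \tfrac13(d+a+b+c)$. By Theorem~\ref{thm: PicX and EffX}(2) the effective cone of $X$ is spanned by the twelve curves $\sZ_i$, so $D$ is nef iff $(D.\sZ_i)\geq 0$ for all of them. As in the proof of Proposition~\ref{prop: exotic classes (nef but not 64-eff)}, one has $(D.A_0)=a$, $(D.A_3)=\ell-b-c$, and --- using $A_1 \equiv A_2 \equiv C_0+A_3$ numerically (Remark~\ref{rmk: Some linEquiv}) --- $(D.A_1)=(D.A_2)=\ell-b$, together with the cyclic images. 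Since $(D.A_1)=(D.A_3)+c\geq (D.A_3)$ when $c\geq 0$, the conditions coming from $\sZ_1,\sZ_2$ are redundant, so nefness of $D$ is equivalent to $a,b,c\geq 0$ and $a+b,\,b+c,\,c+a\leq\ell$. Separately, Proposition~\ref{prop: 1,64-eff criterion} gives $\e([D])\geq 1 \iff \max\{0,a,b,c\}\leq\ell\leq d$. We keep in mind $d=3\ell-(a+b+c)$ and, by hypothesis, $d\leq 6$.

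The key reduction is an a priori bound on $\ell$: summing the three pairwise inequalities yields $2(a+b+c)\leq 3\ell$, hence
\[
    d = 3\ell-(a+b+c)\;\geq\;3\ell-\tfrac32\ell = \tfrac32\ell,
\]
so $\ell\leq\tfrac23 d\leq 4$. For each $\ell\in\{0,1,2,3,4\}$ the conditions ``$a,b,c$ non-negative integers, all pairwise sums $\leq\ell$, $\max\{0,a,b,c\}\leq\ell$, and $3\ell-6\leq a+b+c\leq\tfrac32\ell$'' (the leftmost inequality encoding $d\leq6$, the rightmost just derived) leave only finitely many triples. Up to cyclic permutation these are: $[0;0,0,0]$ for $\ell=0$; $[3;0,0,0]$ and $[2;1,0,0]$ for $\ell=1$; $[6;0,0,0],\,[5;1,0,0],\,[4;2,0,0],\,[4;1,1,0],\,[3;1,1,1]$ for $\ell=2$; $[6;3,0,0],\,[6;2,1,0],\,[6;1,1,1],\,[5;2,1,1]$ for $\ell=3$; and $[6;2,2,2]$ for $\ell=4$ (for $\ell=4$ one has $a+b+c=6=\tfrac32\ell$, which forces equality in all three pairwise bounds, hence $a=b=c=2$).

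It remains to match each of these against the asserted list. Read with the small parameter values $\ell'\in\{0,1,2,3\}$, the shapes $[2\ell';\ell',0,0]$, $[2\ell'+1;\ell'-1,0,0]$ and $[2\ell';\ell'-1,1,0]$ from Proposition~\ref{prop: exotic classes (nef but not 64-eff)} produce exactly $[0;0,0,0],[2;1,0,0],[4;2,0,0],[6;3,0,0]$; then $[3;0,0,0],[5;1,0,0]$; then $[4;1,1,0],[6;2,1,0]$. The class $[6;1,1,1]$ is $[K_X]$, and $[6;0,0,0]$ is listed explicitly. For the three remaining triples I invoke the flip $(i\leftrightarrow 3-i)$ of Remark~\ref{rmk: symmetry}: intersecting with the flipped curves shows that it carries $[d;a,b,c]$ to $[d;\,\ell-b-c,\,\ell-c-a,\,\ell-a-b]$, whence $[3;1,1,1]\mapsto[3;0,0,0]$, $[5;2,1,1]\mapsto[5;1,0,0]$ and $[6;2,2,2]\mapsto[6;0,0,0]$; so each of these lies in the list up to symmetry, and the proof is complete.

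The argument has no single hard step --- after the bound $\ell\leq 4$ it is a finite check --- but two points deserve care. First, one must confirm that nefness really is captured by the three pairwise-sum inequalities, i.e.\ that the constraints from $\sZ_1$ and $\sZ_2$ are consequences of those from $\sZ_3$; this is where the identities $A_1\equiv C_0+A_3$ etc.\ enter. Second, the symmetry bookkeeping must be done honestly: the triples $[3;1,1,1]$, $[5;2,1,1]$ and $[6;2,2,2]$ do not literally match any shape in Proposition~\ref{prop: exotic classes (nef but not 64-eff)}, and one must exhibit the flip carrying them, respectively, to $[3;0,0,0]$, $[5;1,0,0]$ and $[6;0,0,0]$.
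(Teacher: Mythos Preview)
Your approach is essentially the paper's: translate nefness and effectiveness into the inequalities $a,b,c\geq 0$, $a+b,b+c,c+a\leq\ell$, $M\leq\ell\leq d$, bound $\ell$, and enumerate. The paper organizes the enumeration more tersely by splitting on $M:=\max\{a,b,c\}$: when $M\geq\ell-1$ the class lands in the Proposition~\ref{prop: exotic classes (nef but not 64-eff)} list directly; when $M\leq\ell-2$, the chain $3\ell-d=a+b+c\leq 3M\leq 3(\ell-2)$ forces $d=6$ and $a=b=c=\ell-2$, leaving $[6;0,0,0]$, $[K_X]$, and $[6;2,2,2]$.

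There is one slip in your $\ell=3$ enumeration. Under \emph{cyclic} permutation alone, $(2,1,0)$ and $(2,0,1)$ lie in different orbits --- the orbit of $(2,1,0)$ is $\{(2,1,0),(0,2,1),(1,0,2)\}$ --- and the flip $(a,b,c)\mapsto(\ell-b-c,\ell-c-a,\ell-a-b)$ fixes each element of both orbits when $\ell=3$, so they are genuinely distinct symmetry classes. You have therefore omitted $[6;2,0,1]$. This does not damage the conclusion, since $[6;2,0,1]$ is precisely case~(4) of Proposition~\ref{prop: exotic classes (nef but not 64-eff)} with $\ell'=3$, but the list as written is incomplete; your own closing caution that the symmetry bookkeeping ``must be done honestly'' applies here too.
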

\begin{proof}
    Let $[D] = [d;\,a,b,c]$ and $\ell := \frac 13 (d+a+b+c)$. By the nefness,
    \[
        a,b,c\geq 0 \text{\quad{}and\quad} \ell \geq \max\{a+b,\, b+c,\, c+a\}.
    \]
    Also, by $\e([D]) \geq 1$,
    \[
        M := \max\{a,b,c\} \leq \ell \leq d.
    \]
    If $M = \ell-1$, then $[D]$ belongs to the list in Proposition~\ref{prop: exotic classes (nef but not 64-eff)}. Assume $M \leq \ell-2$. Then,
    \[
        3\ell-d = a+b+c \leq 3M \leq 3(\ell-2)
    \]
    implies $d=6$ and the inequalities are indeed equalities. Hence, $[D] = [6;\, \ell-2,\ell-2,\ell-2]$. By nefness, $2 \leq \ell \leq 4$. The $i \leftrightarrow 3-i$ symmetry swaps $[6;\,0,0,0] \leftrightarrow [6;\,2,2,2]$. If $\ell=3$, $[6;\,1,1,1] = [K_X]$.
\end{proof}
%:

\subsection{Explicit computations for the remaining cases}
%:
According to Subsection~\ref{subsec: num class identification}, we describe a procedure for computing the cohomology dimensions of $D$.
\begin{algorithm}\label{Main Algorithm}\ 
\begin{enumerate}
    \item Test whether either $D$ or $K_X-D$ is effective. If both are not effective, then $h^0(D) = h^2(D)=0$ and $h^1(D) = -\chi(D)$. Otherwise, replacing $D$ by $K_X-D$ if necessary, we assume $D$ is effective. Then, $h^2(D) = 0$ by Lemma~\ref{lem: h0*h2=0}.
    \item\label{flowchart: take reduced form} If necessary, replace $D$ by its reduced form.
    \item If $D$ has $\sf e$-number${}<64$, go to \ref*{flowchart: not 64-eff}. If $\e([D]) =64$, proceed to the following.
        \begin{enumerate}[label=(\arabic{enumi}.\arabic{enumii})]
            \item If $D$ is ample and $(D-K_X)^2>0$, then by Kawamata-Viehweg vanishing theorem, $h^1(D) = 0$, thus $h^0(D) = \chi(D)$.
            \item\label{flowchart: ample 64-eff exotic} If $D$ is ample, but $(D-K_X)^2=0$, then up to symmetry, $D \in [2\ell;\, 0,0,\ell]+[K_X]$\,(Proposition~\ref{prop: exotic classes: ample and 64-eff}). Proposition~\ref{prop: h^0 of [2l;00l]+[KX]} provides $h^0(D)$ explicitly.
            \item\label{flowchart: nef 64-eff, reduction} If $D$ is strictly nef and $d \geq 9$, then by symmetry, we may assume $\mathcal O_{A_0}(D) = \mathcal O_{A_0}$. By Corollary~\ref{cor: reduction of nef and 64-eff}, $h^0(D) = h^0(D-A_0)+1$. We replace $D$ by $D-A_0$ and go back to \ref*{flowchart: take reduced form}.
            \item\label{flowchart: nef 64-eff, with small d} If $D$ is strictly nef and $d \leq 8$. In this case, $D-K_X$ is an effective divisor of degree${}\leq 2$. Up to symmetry, $[D]$ belongs to either $[7;\,0,1,1]$, $[8;\,0,0,1]$, or $[8;\,0,2,2]$. Propositions~\ref{prop: A0+KX [7;011]}, \ref{prop: A0+B0+KX [8;001]}, and \ref{prop: A0+A3+KX [8;022]} computes $h^0(D)$.
        \end{enumerate}
%:
    \item\label{flowchart: not 64-eff} If $\e([D])<64$, then by Proposition~\ref{prop: exotic classes (nef but not 64-eff)} and Proposition~\ref{prop: small degree classification}, $[D]$ belongs to one of the following\,(up to symmetry): $[K_X]$, $[6;\,0,0,0]$, $[2\ell;\,0,0,\ell]$, $[2\ell+1;\, 0,0,\ell-1]$,  $[2\ell;\,0,1,\ell-1]$, or $[2\ell;\,0,\ell-1,1]$. Propositions~ \ref{prop: [6;000]}, \ref{prop: h^0 of [2l;0,0,l]}, \ref{prop: h^0 of [2l+1;0,0,l-1]}, \ref{prop: h^0 of [2l;0,1,l-1]} and \ref{prop: h^0 of [2l;0,l-1,1]} computes $h^0(D)$ for these cases.
\end{enumerate}
To evaluate $h^0$ of the original $D$, we retrace the algorithm and accumulate the increments obtained in \ref*{flowchart: nef 64-eff, reduction}, and then determine $h^1(D)$ via $\chi(D)$. 
\end{algorithm}
\medskip
In this subsection, we present the evaluation for divisors that appeared in \ref*{flowchart: ample 64-eff exotic}, \ref*{flowchart: nef 64-eff, with small d}, and \ref*{flowchart: not 64-eff}.
\begin{proposition}\label{prop: [6;000]}
    If a divisor $D \in [6;\,0,0,0]$ is of the reduced form, then $D = 2(A_0+C_3+B_0)$. In this case, we have $h^0(D) = 3$.
\end{proposition}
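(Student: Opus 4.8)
The plan is to identify the unique reduced form in the class $[6;0,0,0]$, and then to compute its $h^0$ after pushing forward along $\pi$.

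\emph{Identifying the divisor.} For $[D]=[6;0,0,0]$ we have $\ell:=\tfrac13(6+0+0+0)=2$, so if $D$ is nef then, exactly as in the proof of Proposition~\ref{prop: exotic classes (nef but not 64-eff)}, $(D.A_0)=(D.B_0)=(D.C_0)=0$ and $(D.A_3)=(D.B_3)=(D.C_3)=2$. By Definition~\ref{def: reduced form} each vanishing intersection $(D.\sZ_0)$ forces the corresponding $\sZ_0[2]$-component of the symmetric coordinates to be $00$, so the truncated symmetric coordinates of $D$ must equal $\bigl(6\mid(0\ 00)\ (0\ 00)\ (0\ 00)\bigr)$. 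A short computation with Table~\ref{table: symCoords} shows that $2(A_0+C_3+B_0)$ realizes precisely these truncated coordinates — and checking the twelve numbers $(2(A_0+C_3+B_0).\sZ_i)$ confirms it is nef with $(2(A_0+C_3+B_0).K_X)=6>0$, hence genuinely of reduced form. Since $\varphi$ is injective (Theorem~\ref{thm: PicX and EffX}), we conclude $D=2(A_0+C_3+B_0)$.

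\emph{Computing $h^0$.} The key observation is $2(A_0+C_3+B_0)=\pi^*(\bar A_0+\bar C_3+\bar B_0)$, because $\pi^*\bar\sZ_i=2\sZ_i$. Contracting the three pairwise disjoint $(-1)$-curves $\bar A_0,\bar B_0,\bar C_0$ realizes $\bar X$ as the blow-up of $\P^2$ at three general points, with $\bar A_0,\bar B_0,\bar C_0$ becoming the exceptional curves $E_1,E_2,E_3$ and the intermediate hexagon edge $\bar C_3$ becoming the line $H-E_1-E_2$; hence $\bar A_0+\bar C_3+\bar B_0\sim H$, the pullback of a general line. Using $\pi_*\mathcal O_X=\mathcal A$ and the projection formula,
\[
    h^0(D)=h^0\bigl(\bar X,\mathcal O_{\bar X}(H)\bigr)+\sum_{\sZ\in\{A,B,C\}}h^0\bigl(\bar X,\mathcal O_{\bar X}(H-L_\sZ)\bigr).
\]
Here $h^0(\bar X,\mathcal O_{\bar X}(H))=3$, and each remaining summand vanishes: from $2L_A=\bar R-\sum_i\bar A_i$ one computes $L_A\sim 3H-2E_1-E_3$, so $H-L_A\sim -2H+2E_1+E_3$ has negative intersection with the nef class $H$ and hence is not effective; the classes $H-L_B,H-L_C$ follow by the cyclic symmetry $A\mapsto B\mapsto C$, which fixes $H$ and permutes the $L_\sZ$. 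Therefore $h^0(D)=3$.

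\emph{Main obstacle.} The only real hazard is bookkeeping: fixing one consistent labelling of the abstract hexagon of $(-1)$-curves as the configuration $\{E_i,\ H-E_i-E_j\}$ on $\mathrm{Bl}_3\P^2$, and then computing $\bar R$ (equivalently, $\bar R\sim -3K_{\bar X}$) and the three $L_\sZ$ in those coordinates. Once that is in place, both assertions follow immediately. As a consistency check, Riemann--Roch gives $\chi(D)=1+\tfrac12(D^2-D.K_X)=1+\tfrac12(4-6)=0$, so in fact $h^1(D)=3$ and $h^2(D)=0$ as well.
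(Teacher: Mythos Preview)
Your proof is correct, and the identification of the unique reduced form is carried out just as in the paper. The computation of $h^0$, however, follows a genuinely different route.

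The paper works entirely on $X$: it restricts along $A_0\cup B_0$ via
\[
    0 \to \mathcal O_X(D-A_0-B_0) \to \mathcal O_X(D) \to \mathcal O_{A_0}\oplus\mathcal O_{B_0} \to 0,
\]
exhibits two explicit members $2(A_3+C_0+B_0),\,2(A_0+C_0+B_3)\in\lvert D\rvert$ whose restrictions span $H^0(\mathcal O_{A_0})\oplus H^0(\mathcal O_{B_0})$, and reduces to $h^0(D-A_0-B_0)=h^0(A_0+C_3+B_0)=1$ via Proposition~\ref{prop: trimming lemma}. You instead observe $D=\pi^*H$ for a hyperplane class on $\bar X\simeq\mathrm{Bl}_3\P^2$, apply the projection formula with $\pi_*\mathcal O_X=\mathcal A$, and check that all three twisted summands $\mathcal O_{\bar X}(H-L_\sZ)$ have no sections. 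Your approach is cleaner for this specific divisor because it happens to be a pullback, and it bypasses any inductive bookkeeping on $X$; the paper's approach, on the other hand, stays within the toolkit (restriction sequences and reduced forms) used uniformly throughout Section~\ref{sec: Main Algorithm}, so it requires no additional setup of the $\P^2$-model. One minor remark: nefness is already determined by the six intersections $(D.\sZ_i)_{i=0,3}$, since numerically $\sZ_1\equiv\sZ_2\equiv$ a sum of two such curves, so ``twelve numbers'' is more than needed.
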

\begin{proof}
    In the short exact sequence
    \[
        0 \to \mathcal O_X(D-A_0-B_0) \to \mathcal O_X(D) \to \mathcal O_{A_0} \oplus \mathcal O_{B_0} \to 0,
    \]
    we have the map $H^0(D) \to H^0(\mathcal O_{A_0} ) \oplus H^0(\mathcal O_{B_0})$. If $s \in H^0(D)$ is the section corresponding to $2(A_3+C_0+B_0)$, then $s\big\vert_{A_0} \neq 0$ and $s\big\vert_{B_0} = 0$. On the other hand, the section $s'$ corresponding to $2(A_0+C_0+B_3)$ satisfies $s\big\vert_{A_0} = 0$ and $s\big\vert_{B_0} \neq 0$. The restriction of $s$ and $s'$ to $A_0 + B_0$ generates $H^0(\mathcal O_{A_0}) \oplus H^0(\mathcal O_{B_0})$. This shows that $h^0(D) = h^0(D-A_0-B_0) + 2$. Applying reduction procedure to $D-A_0-B_0$, we find that $h^0(D-A_0-B_0) = h^0(A_0+C_3+B_0)=1$, thus $h^0(D) = 3$.
\end{proof}

\begin{proposition}\label{prop: A0+KX [7;011]}
    Assume $D = A_0 + K_X + \tau \in [7;\,0,1,1]$ is of the reduced from. Then, $\tau = (00\ {**}\ {**})$. Moreover, we have
    \[
        h^0(D) = \left\{
            \begin{array}{l@{\qquad}l}
                3 & \text{if }\tau = (00\ 10\ 00)\\
                1 & \text{if }\tau = (00\ 00\ 00) \\
                2 & \text{otherwise}.
            \end{array}
        \right.
    \]
\end{proposition}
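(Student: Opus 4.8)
First I would determine which $\tau$ can occur. Since $D = A_0 + K_X + \tau$, we have $(D . A_0) = A_0^2 + (K_X . A_0) = 0$, and by Table~\ref{table: symCoords} the $A_0[2]$-components of $A_0$ and $K_X$ are both $00$, so the $A_0[2]$-component of $D$ equals $\tau\big\vert_{A_0[2]}$. As a reduced form must have trivial $\sZ_i[2]$-component whenever $(D . \sZ_i) = 0$, this forces $\tau = (00\ {**}\ {**})$; Table~\ref{table: symCoords} also gives $(D . \sZ_i) > 0$ for the remaining $\sZ_i$ with $i \in \{0,3\}$, so no further trimming occurs. Because $\tau$ is numerically trivial, Riemann--Roch yields $\chi(D) = 1$, and $h^2(D) = h^0(K_X - D) = h^0(-A_0 - \tau) = 0$; hence $h^0(D) = 1 + h^1(D)$, and it suffices to compute $h^0(D)$.

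The main tool is the exact sequence $0 \to \mathcal O_X(K_\tau) \to \mathcal O_X(D) \to \mathcal O_{A_0}(D) \to 0$, where $K_\tau := K_X + \tau$. Since $D$ is reduced, $\mathcal O_{A_0}(D) = \mathcal O_{A_0}$, so $H^0(D) \to H^0(\mathcal O_{A_0}) = \CC$ and therefore $h^0(D) = h^0(K_\tau) + \varepsilon$ with $\varepsilon \in \{0,1\}$, where $\varepsilon = 1$ precisely when $A_0$ is not in the base locus of $\lvert D \rvert$. By Proposition~\ref{prop: flexible torsions}, for $\tau = (00\ {**}\ {**})$ one has $h^0(K_\tau) = 0$ if $\tau = 0$, $h^0(K_\tau) = 2$ if $\tau \in \{(00\ 10\ 00),\,(00\ 00\ 10)\}$, and $h^0(K_\tau) = 1$ otherwise. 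When $\tau = 0$, $h^1(K_X) = h^1(\mathcal O_X) = 0$, so the connecting map $H^0(\mathcal O_{A_0}) \to H^1(K_X)$ vanishes, $\varepsilon = 1$, and $h^0(D) = 1$. When $\tau \neq 0$ is non-flexible, Serre duality and Riemann--Roch give $h^1(K_\tau) = h^1(-\tau) = h^0(-\tau) + h^2(-\tau) - \chi(-\tau) = 0 + h^0(K_\tau) - 1 = 0$, so again $\varepsilon = 1$ and $h^0(D) = 2$.

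It remains to pin down $\varepsilon$ in the two flexible cases, where no cohomological shortcut is available. For $\tau = (00\ 10\ 00)$: the identity $K_\tau = 2(B_0 + C_3) + A_0 + A_3$ (verified from Table~\ref{table: symCoords}, or from the cyclic symmetry applied to $K_X + (10\ 00\ 00) = 2(A_0 + B_3) + C_0 + C_3$) gives $D = A_0 + K_\tau = 2(A_0 + C_3) + 2B_0 + A_3 = 2A_1 + 2B_0 + A_3$ by Remark~\ref{rmk: Some linEquiv}(1). This representative omits $A_0$, so $\varepsilon = 1$ and $h^0(D) = 3$. For $\tau = (00\ 00\ 10)$ I would instead show $A_0$ lies in the base locus, giving $h^0(D) = h^0(D - A_0) = h^0(K_\tau) = 2$. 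Here $D$ lies in the numerical class $[7;0,1,1]$ and its $C_0[2]$-component is $10$, so by the effectiveness test every effective $D' \sim D$ contains either $B_3$ or $B_1 + B_2$. If $B_1 + B_2 \subset D'$, then $(D - B_1 - B_2 . C_0) = -1$ forces $C_0 \subset D'$ as well; but $D - B_1 - B_2 - C_0$ is not effective — it is a degree-$2$ class whose $\Pic B_0$-component is $(1\ 00)$, which (by inspecting the finitely many effective divisors of degree $2$) cannot occur. Hence $B_3 \subset D'$, and then $(D - B_3 . A_0) = -1$ forces $A_0 \subset D'$.

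The step I expect to be the main obstacle is the $\tau = (00\ 00\ 10)$ case: it is the only situation without a uniform cohomological argument, and one must follow the effectiveness test through two successive subtractions while keeping track of the $(\ZZ/2)^2$-parts of the symmetric coordinates in order to exclude the spurious possibility $B_1 + B_2 \subset D'$.
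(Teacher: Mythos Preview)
Your overall framework matches the paper's: the same short exact sequence restricting to $A_0$, the same treatment of $\tau = 0$ and the non-flexible torsions, and for $\tau = (00\ 10\ 00)$ your representative $2A_1 + 2B_0 + A_3$ (which avoids $A_0$) is a valid and slightly more direct way to see $\varepsilon = 1$ than the paper's two-pencil argument.

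The gap is in the case $\tau = (00\ 00\ 10)$. Your claim that ``every effective $D' \sim D$ contains either $B_3$ or $B_1 + B_2$'' is not justified by the effectiveness test. Theorem~\ref{thm: PicX and EffX}(2) only guarantees that $D'$ is \emph{linearly equivalent} to some nonnegative combination $\sum z_i \sZ_i$, not that $D'$ itself decomposes this way; since $h^0(D) \geq 2$, a general member of $\lvert D \rvert$ may well have irreducible components other than the $\sZ_i$. Concretely, $(D'.C_0)=1$ with $C_0[2]$-component $10$ only tells you that $D'$ meets $C_0$ at the single point $B_3 \cap C_0$; it does not force $B_3$ (or $B_1+B_2$) to be a component of $D'$. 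So your subsequent intersection-number deductions, while correct for $\sZ_i$-sum representatives, do not control an arbitrary $D' \in \lvert D \rvert$, and you have not shown that $A_0$ lies in the base locus.

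The paper sidesteps this by restricting to $C_0$ instead of $A_0$: from
\[
    0 \to \mathcal O_X(D-C_0) \to \mathcal O_X(D) \to \mathcal O_{C_0}(D) \to 0
\]
one has $(D.C_0)=1$, hence $h^0(\mathcal O_{C_0}(D))=1$, and one checks directly that $D-C_0 = A_0 + 2A_3 + B_0 + B_3 + C_0$ has reduced form the principal divisor, so $h^0(D-C_0)=1$. This gives $h^0(D) \leq 2$, and combined with $h^0(D) \geq h^0(K_\tau) = 2$ the case is done. The moral is to bound $h^0$ from above by a second restriction sequence rather than attempt a base-locus analysis.
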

\begin{proof}
    Since $A_0+K_X$ has the symmetric coordinates
    \[
       \begin{array}{ c | r r r | r r r}
            \Bigl( 7 & \bigl(0\ 00) & \bigl(1\ 00) & \bigl(1\ 00) & (1\ 00) & (2\ 10) & (2\ 00) \Bigr)
        \end{array},
    \]
    $D$ is of the reduced form if and only if $\tau = (00\ {**}\ {**})$. Consider the short exact sequence
    \[
        0 \to \mathcal O_X(K_X+\tau) \to \mathcal O_X(A_0+K_X+\tau) \to \mathcal O_{A_0}\to 0.
    \]
    Note that $\mathcal O_{A_0}(A_0+K_X+\tau) = \mathcal O_{A_0}$ by adjunction formula and $\mathcal O_X(\tau)\big\vert_{A_0} = \mathcal O_{A_0}$. If $\tau = (00\ 00\ 00)$, then $h^0(K_X)=h^1(K_X)=0$ implies $h^0(A_0+K_X) = h^0(\mathcal O_{A_0})=1$. Assume $\tau \neq (00\ 00\ 00)$. We have two cases: whether $\tau$ is flexible or not\,(Definition~\ref{def: flexible torsion}). Assume $\tau$ is not flexible. Then, $h^1(K_X+\tau)=0$ implies that $h^0(A_0+K_X+\tau) = h^0(K_X+\tau) + h^0(\mathcal O_{A_0}) = 2$. If $\tau$ is flexible, then either $\tau = B_2-B_1$ or $\tau = A_2-A_1$. If $\tau = B_2-B_1$, then in the short exact sequence
\[
    0 \to \mathcal O_X(D-C_0) \to \mathcal O_X(D) \to \mathcal O_{C_0}(D) \to 0,
\]
    we have $D-C_0 = A_0 + 2A_3 + B_0 + B_3 + C_0$ and its reduced form is the principal divisor. Also, $(D.C_0)=1$ implies $h^0(\mathcal O_{C_0}(D))=1$. Thus, 
    \[
        2 \leq h^0(K_X+\tau) \leq h^0(\mathcal O(D)) \leq h^0(D-C_0) + h^0(\mathcal O_{C_0}(D)) = 2.
    \]
    Finally, if $\tau = A_2-A_1$, then,
    \[
        D = A_3 + 2A_0 + 2B_3 + 2C_0.
    \]
    The linear system $\lvert D \rvert$ contains two pencils given by $\lvert 2(A_0 + B_3) \rvert$ and $\lvert 2(B_3+C_0) \rvert$, so $h^0(D) \geq 3$. Moreover, $h^0(D) \leq h^0(K_X+\tau) + h^0(\mathcal O_{A_0}) = 3$, thus $h^0(D) = 3$.
\end{proof}
%:
\begin{proposition}\label{prop: A0+B0+KX [8;001]}
    Assume $D = A_0+B_0+K_X + \tau \in [8;0,0,1]$ is of the reduced form. Then, $\tau = (00\ 00\ {**})$. Moreover, we have
    \[
        h^0(D) = \left\{
            \begin{array}{l@{\qquad}l}
                 3 & \text{if }\tau \neq (00\ 00\ 00)  \\
                 2 & \text{if }\tau = (00\ 00\ 00).
            \end{array}
        \right.
    \]
\end{proposition}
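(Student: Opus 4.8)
From Table~\ref{table: symCoords} one reads off that $A_0+B_0+K_X$ has symmetric coordinates $\bigl(8 \mid (0\ 00)\ (0\ 00)\ (1\ 00)\ (2\ 00)\ (2\ 10)\ (3\ 10)\bigr)$, hence intersection numbers $0,0,1,2,2,3$ with $A_0,B_0,C_0,A_3,B_3,C_3$. Adding a torsion changes neither the numerical class nor these numbers, so $D$ is nef with $(D.K_X)=8$, and among the ramification curves $\sZ_i$ ($i=0,3$) only $A_0$ and $B_0$ meet $D$ in degree $0$; their $[2]$-components in the symmetric coordinates of $D$ are $\tau\big\vert_{A_0[2]}$ and $\tau\big\vert_{B_0[2]}$. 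Thus $D$ is of the reduced form precisely when both vanish, i.e. $\tau=(00\ 00\ {**})$. Since $\bar A_0$ and $\bar B_0$ are non-adjacent edges of the hexagon we have $A_0\cap B_0=\emptyset$, and because the two intersection numbers and the two $[2]$-components vanish, $\mathcal O_X(D)\big\vert_{A_0}=\mathcal O_{A_0}$ and $\mathcal O_X(D)\big\vert_{B_0}=\mathcal O_{B_0}$.

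\textbf{Main computation.} From $A_0\cap B_0=\emptyset$ and $D-A_0-B_0=K_X+\tau$ we obtain
\[
    0 \to \mathcal O_X(K_X+\tau) \to \mathcal O_X(D) \to \mathcal O_{A_0}\oplus\mathcal O_{B_0} \to 0 ,
\]
so $h^0(D)=h^0(K_X+\tau)+\dim\ker\bigl(H^0(\mathcal O_{A_0})\oplus H^0(\mathcal O_{B_0})\to H^1(K_X+\tau)\bigr)$. Now $h^1(K_X)=q(X)=0$, and for $\tau\ne0$ Serre duality gives $h^1(K_X+\tau)=h^1(\mathcal O_X(-\tau))=h^0(K_X+\tau)-1$ (using $\chi(-\tau)=1$ and $h^0(-\tau)=0$); hence $h^1(K_X+\tau)=0$ whenever $\tau$ is not flexible. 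Therefore, for $\tau=(00\ 00\ 00)$ we get $h^0(D)=0+2=2$, and for the nonzero non-flexible torsions $\tau\in\{(00\ 00\ 01),(00\ 00\ 11)\}$ we get $h^0(D)=1+2=3$, using $h^0(K_X+\tau)=1$ from Proposition~\ref{prop: flexible torsions}.

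\textbf{The flexible torsion.} It remains to treat $\tau=(00\ 00\ 10)$, for which the sequence above only yields $h^0(D)\in\{3,4\}$ (as $h^1(K_X+\tau)=1$). Comparing symmetric coordinates via Table~\ref{table: symCoords}, one checks that for this $\tau$ one has $D = 3A_0+3B_3+2C_0$, which is the divisor $A_0+B_3+2(A_0+B_3+C_0)$ of the Remark following Corollary~\ref{cor: reduction of nef and 64-eff}. In particular $B_0$ is not a fixed component of $\lvert D\rvert$, so in the exact sequence $0\to\mathcal O_X(A_0+K_X+\tau)\to\mathcal O_X(D)\to\mathcal O_{B_0}\to0$ arising from $D-B_0=A_0+K_X+\tau\in[7;\,0,1,1]$, the map $H^0(D)\to H^0(\mathcal O_{B_0})$ is surjective, whence $h^0(D)=h^0(A_0+K_X+\tau)+1$. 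Since $A_0+K_X+(00\ 00\ 10)$ is of the reduced form and $(00\ 00\ 10)\notin\{(00\ 10\ 00),(00\ 00\ 00)\}$, Proposition~\ref{prop: A0+KX [7;011]} gives $h^0(A_0+K_X+\tau)=2$, and therefore $h^0(D)=3$.

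\textbf{Expected obstacle.} The only delicate step is the flexible torsion: the uniform sequence merely bounds $h^0(D)$ between $3$ and $4$, so one must genuinely exhibit a member of $\lvert D\rvert$ avoiding $B_0$ — equivalently, verify that $(00\ 00\ 10)$ can be absorbed into the $A_0,B_3,C_0$ part of the divisor rather than being forced to carry a copy of $B_0$. Locating $3A_0+3B_3+2C_0$ is a short bookkeeping computation with the coordinate table, essentially an instance of the effectiveness test of Subsection~\ref{subsec: effectiveness test}.
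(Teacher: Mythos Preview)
Your proof is correct. For $\tau=0$ and the non-flexible nonzero torsions you argue exactly as the paper does, via the sequence $0\to\mathcal O_X(K_X+\tau)\to\mathcal O_X(D)\to\mathcal O_{A_0}\oplus\mathcal O_{B_0}\to 0$. (Incidentally, your $C_3$-coordinate $(3\ 10)$ is the right one; the paper's displayed $(3\ 00)$ is a typo, but it is irrelevant since $(D.C_3)>0$.)

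For the flexible torsion $\tau=(00\ 00\ 10)$ you take a genuinely different route. The paper obtains the upper bound $h^0(D)\le 3$ by Kawamata--Viehweg applied to $D+C_3$ (with $A_0+C_3+B_0$ nef and big), and the lower bound $h^0(D)\ge 3$ from $h^1(D)\ge h^1(\mathcal O_{A_0}\oplus\mathcal O_{B_0})=2$ together with $\chi(D)=1$. You instead exhibit the explicit member $3A_0+3B_3+2C_0\in\lvert D\rvert$ avoiding $B_0$, conclude that $H^0(D)\to H^0(\mathcal O_{B_0})$ is onto, and reduce to $h^0(A_0+K_X+\tau)$ via the earlier Proposition~\ref{prop: A0+KX [7;011]}. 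Your approach is more constructive and avoids the auxiliary vanishing theorem, at the cost of depending on the prior proposition; the paper's argument is self-contained to this proposition but relies on computing $\chi$ and invoking Kawamata--Viehweg. Both are clean; note only that your parenthetical pointer to the Remark after Corollary~\ref{cor: reduction of nef and 64-eff} is logically downstream of the present proposition, so it should be read as a cross-reference rather than an appeal---which is fine, since you verify $D=3A_0+3B_3+2C_0$ directly from the table.
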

\begin{proof}
    The symmetric coordinates of $A_0+B_0+K_X$ are
    \[
       \begin{array}{ c | r r r | r r r}
            \Bigl( 8 & \bigl(0\ 00) & \bigl(0\ 00) & \bigl(1\ 00) & (2\ 00) & (2\ 10) & (3\ 00) \Bigr)
        \end{array},
    \]
    hence $D$ is of the reduced form if and only if $\tau = (00\ 00\ {**})$. Since $A_0+C_3+B_0$ is nef and big, by Kawamata-Viehweg vanishing theorem, we have $h^0(D+C_3) = \chi(D+C_3) =3$. Thus, $h^0(D) \leq 3$. Consider the short exact sequence
    \[
        0 \to \mathcal O_X(K_X+\tau) \to \mathcal O_X(D) \to \mathcal O_{A_0} \oplus \mathcal O_{B_0} \to 0.
    \]
    If $\tau=(00\ 00\ 00)$, $h^0(D) = h^0(\mathcal O_{A_0}) + h^0(\mathcal O_{B_0}) = 2$. If $\tau = (00\ 00\ 10) = B_2-B_1$, then $h^1(D) \geq 2$ as $H^1(D) \to H^1(\mathcal O_{A_0} \oplus \mathcal O_{B_0})$ is surjective. It follows that $h^0(D) = 3$. If $\tau \neq (00\ 00\ 00)$ is not flexible, then $h^1(K_X+\tau)=0$, so $h^0(D) = h^0(K_X+\tau) + h^0(\mathcal O_{A_0} \oplus \mathcal O_{B_0}) = 3$.
\end{proof}
%:
\begin{proposition}\label{prop: A0+A3+KX [8;022]}
    Assume $D = A_0 + A_3 + K_X + \tau \in [8;0,2,2]$ is of the reduced form. Then $\tau = (00\ {*0}\ {**})$, and we have
    \[
        h^0(D) = \left\{
            \begin{array}{l@{\qquad}l}
                4 & \text{if }\tau = (00\ 10\ 00)  \\
                2 & \text{if }\tau = (00\ 00\ 00) \\
                3 & \text{otherwise.}
            \end{array}
        \right.
    \]
\end{proposition}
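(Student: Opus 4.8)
The plan is to imitate Propositions~\ref{prop: A0+KX [7;011]} and~\ref{prop: A0+B0+KX [8;001]}. First I would read off from Table~\ref{table: symCoords} that $A_0+A_3+K_X$ has symmetric coordinates $\bigl(8\mid(0\ 00)\ (2\ 10)\ (2\ 00)\ (0\ 00)\ (2\ 10)\ (2\ 00)\bigr)$. In particular $(D.A_0)=(D.A_3)=0$, so the reduced-form condition forces the $A_0[2]$- and $A_3[2]$-components of $\tau$ to vanish, which by \eqref{eq: torsion dependency} is precisely the requirement $\tau=(00\ {*0}\ {**})$. This leaves eight torsions, and by Proposition~\ref{prop: flexible torsions} only $\tau=(00\ 10\ 00)=A_2-A_1$ and $\tau=(00\ 00\ 10)=B_2-B_1$ among them are flexible; alongside this I would record that $h^1(K_X+\tau)=h^0(K_X+\tau)-1$ for $\tau\neq 0$ and $h^1(K_X)=0$, which follows from Riemann--Roch using $\chi(\mathcal O_X)=1$, $h^0(\tau)=0$, and $h^2(K_X+\tau)=h^0(-\tau)$.

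The backbone is the sequence obtained by removing the two disjoint curves $A_0$ and $A_3$; since $D$ is reduced, $\mathcal O_{A_i}(D)\cong\mathcal O_{A_i}$ for $i=0,3$, so
\[
    0\to\mathcal O_X(K_X+\tau)\to\mathcal O_X(D)\to\mathcal O_{A_0}\oplus\mathcal O_{A_3}\to 0,
\]
and the long exact sequence gives $h^0(D)=h^0(K_X+\tau)+\dim\ker\!\bigl(\delta\colon\CC^2\to H^1(K_X+\tau)\bigr)$. This disposes of the non-flexible torsions immediately: for $\tau=0$ we have $H^1(K_X)=0$, so $\delta=0$ and $h^0(D)=2$; for a nonzero non-flexible $\tau$ again $H^1(K_X+\tau)=0$, hence $h^0(D)=1+2=3$. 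For a flexible $\tau$ it only yields $h^0(D)=2+\dim\ker\delta\in\{3,4\}$, but since $\delta$ maps into a $1$-dimensional space we at least get $h^0(D)\ge 3$. For the complementary estimate I would use the decomposition $D=D''+A_3$ with $D'':=A_0+K_X+\tau=D-A_3\in[7;0,1,1]$, which is again of reduced form (here $\tau$ has the shape $(00\ {**}\ {**})$, so Proposition~\ref{prop: A0+KX [7;011]} applies); the sequence $0\to\mathcal O_X(D'')\to\mathcal O_X(D)\to\mathcal O_{A_3}(D)\to 0$ with $\mathcal O_{A_3}(D)\cong\mathcal O_{A_3}$ gives $h^0(D)\le h^0(D'')+1$, and Proposition~\ref{prop: A0+KX [7;011]} provides $h^0(D'')=3$ when $\tau=(00\ 10\ 00)$ and $h^0(D'')=2$ when $\tau=(00\ 00\ 10)$.

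Combining the two bounds finishes $\tau=(00\ 00\ 10)$, where $3\le h^0(D)\le h^0(D'')+1=3$. For $\tau=(00\ 10\ 00)$ the upper bound is $4$, and to attain it I would exhibit a member of $\lvert D\rvert$ not containing $A_3$, which makes $H^0(D)\to H^0(\mathcal O_{A_3})$ surjective in the last sequence and forces $h^0(D)=h^0(D'')+1=4$. The point is that for this $\tau$ one computes $D=2(A_0+B_3)+2(A_3+C_0)$, and by Remark~\ref{rmk: Some linEquiv}(1) and its cyclic image $2(A_3+C_0)=2A_1$ and $2(A_0+B_3)=2B_1$, so $D=2B_1+2A_1=\pi^*(\bar A_1+\bar B_1)$; the irreducible curve $A_3$ is not a component of $2B_1+2A_1$, so it is not in the base locus of $\lvert D\rvert$. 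I expect this separation of the two flexible torsions to be the main obstacle: a priori both allow $h^0(D)\in\{3,4\}$, and one must see why the ``$A$-type'' torsion $A_2-A_1$ produces the extra section while the ``$B$-type'' torsion $B_2-B_1$ does not — which becomes transparent once $D$ is recognized, in the first case, as the pullback of a sum of two conic classes on $\bar X$. The remaining ingredients — the coordinate bookkeeping and the Riemann--Roch computation of $h^1(K_X+\tau)$ — are routine.
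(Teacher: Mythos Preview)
Your proof is correct and takes a somewhat different route from the paper's. Both arguments use the sequence $0\to\mathcal O_X(K_X+\tau)\to\mathcal O_X(D)\to\mathcal O_{A_0}\oplus\mathcal O_{A_3}\to 0$ to settle the non-flexible cases and $\tau=0$. For the two flexible torsions, the paper obtains the upper bound $h^0(D)\le 4$ via Kawamata--Viehweg vanishing on $D+C_1$ (since $A_0+A_3+C_1$ is nef and big) and then, for $\tau=(00\ 10\ 00)$, writes $D=2(A_0+B_3)+2(C_3+A_0)$ and argues that the two pencils generate independent subspaces of $H^0(D)$; for $\tau=(00\ 00\ 10)$ it exhibits an explicit member of $\lvert D+C_1\rvert$ not containing $C_1$ to force $h^0(D)<4$, and combines with $h^1(D)\ge 2$ and $\chi(D)=1$. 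You instead bound from above by restricting to $A_3$ and invoking Proposition~\ref{prop: A0+KX [7;011]} on $D-A_3$, which immediately gives $h^0(D)\le 3$ for $\tau=(00\ 00\ 10)$ and $h^0(D)\le 4$ for $\tau=(00\ 10\ 00)$; the matching lower bound $h^0(D)\ge 3$ comes from $\dim H^1(K_X+\tau)=1$, and the last missing equality is obtained by exhibiting $2A_1+2B_1\in\lvert D\rvert$ free of $A_3$. Your approach has the virtue of recycling Proposition~\ref{prop: A0+KX [7;011]} and avoiding a separate vanishing argument, while the paper's is more self-contained and makes the role of the two pencils for the $A$-type torsion explicit.
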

\begin{proof}
    Since $A_0+A_3+K_X$ has the symmetric coordinates
    \[
       \begin{array}{ c | r r r | r r r}
            \Bigl( 8 & \bigl(0\ 00) & \bigl(2\ 10) & \bigl(2\ 00) & (0\ 00) & (2\ 10) & (2\ 00) \Bigr)
        \end{array},
    \]
    $D$ is of the reduced form if and only if $\tau=(00\ {*0}\ {**})$\,(cf. \eqref{eq: torsion dependency}).
    Since $A_0+A_3+C_1$ is nef and big, Kawamata-Viewheg vanishing theorem reads $h^0(D+C_1)=\chi(D+C_1)=4$. Hence, we have an upper bound $h^0(D) \leq 4$. Suppose $\tau = A_2-A_1 = (00\ 10\ 00)$. Then,
    \[
        D = 4A_0+2B_3+2C_3 = 2(A_0+B_3) + 2(C_3+A_0).
    \]
    In right hand side, $h^0(2(A_0+B_3)) = h^0( 2(C_3+A_0)) = 2$, and the corresponding subspaces in $h^0(D)$ are independent, so $h^0(D)=4$. Suppose $\tau = B_2 -B_1 = (00\ 00\ 10)$. In this case,
    \[
        D+C_1 = C_0+ C_2+C_3+A_0+A_3+B_1+B_2.
    \]
    The right hand side does not contain $C_1$, hence the map $h^0(D) \to h^0(D+C_1)$ cannot be surjective, showing that $h^0(D) < h^0(D+C_1)=4$. Furthermore, in the short exact sequence
    \[
        0 \to \mathcal O_X(K_X+\tau) \to \mathcal O_X(D) \to \mathcal O_{A_0} \oplus \mathcal O_{A_3} \to 0,
    \]
    we obtain $h^1(D) \geq 2$. Consequently, $h^0(D) = 3$. Now, assume $\tau$ is not flexible. Then, in the above sequence we easily obtain $h^0(D) = h^0(K_X+\tau) + h^0(\mathcal O_{A_0} \oplus \mathcal O_{A_3}) = 3$. Finally, if $\tau = (00\ 00\ 00)$, then $h^0(D) = h^0(\mathcal O_{A_0} \oplus \mathcal O_{A_3}) = 2$.
\end{proof}

\begin{proposition}\label{prop: h^0 of [2l;0,0,l]}
    Let $D = \ell(A_0+B_3) + \tau \in [2\ell;\,0,0,\ell]$. If $D$ is of the reduced form, then,
    \[
        \tau = \left \{
            \begin{array}{ll}
                (00\ 00\ {*0}) & \text{if $\ell$ is even} \\
                (00\ 00\ {*1}) & \text{if $\ell$ is odd}.
            \end{array}
        \right.
    \]
    For $\ell \geq 1$, we have 
    \[
        h^0(D) = \left\{
            \begin{array}{ll}
                \frac{\ell}{2} + 1 & \text{if $\ell$ is even and }\tau = (00\ 00\ 00) \\
                \frac{\ell}{2} & \text{if $\ell$ is even and }\tau = (00\ 00\ 10) \\
                \frac{\ell+1}{2} & \text{if $\ell$ is odd}
            \end{array}
        \right.
    \]
\end{proposition}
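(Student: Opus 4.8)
We organise the argument around the pencil $|2(A_0+B_3)|$.

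\emph{Admissible torsions.} Using Table~\ref{table: symCoords} one computes that $A_0+B_3$ has symmetric coordinates $\bigl(2\mid(0\ 00)\,(0\ 00)\,(1\ 10)\mid(0\ 00)\,(0\ 10)\,(1\ 00)\bigr)$, so $\ell(A_0+B_3)$ has trivial $A_0[2]$-, $B_0[2]$- and $A_3[2]$-components while its $B_3[2]$-component is $\bigl((\ell\bmod2)\,0\bigr)$. Since $(D.A_0)=(D.B_0)=(D.A_3)=(D.B_3)=0$ and $(D.C_0)=(D.C_3)=\ell>0$, being of the reduced form means exactly that the $A_0[2]$-, $B_0[2]$-, $A_3[2]$- and $B_3[2]$-components of $D$ vanish. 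Writing $\tau=(a_1a_2\ b_1b_2\ c_1c_2)$ and using the dependency relation \eqref{eq: torsion dependency}, these four conditions force $a_1=a_2=b_1=b_2=0$ and $c_2\equiv\ell\pmod2$, leaving $c_1$ free; this is the stated form of $\tau$.

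\emph{Strategy.} I will compute $h^0(D)$ by induction on $\ell$ in steps of $2$. Fix the free bit $c_1$, and for $D_\ell:=\ell(A_0+B_3)+\tau_\ell$ (with $\tau_\ell$ the admissible torsion having this $c_1$ and $c_2=\ell\bmod2$) note that $D_\ell=D_{\ell-2}+2(A_0+B_3)$ in $\Pic X$. The key geometric input is that $2(A_0+B_3)=\pi^*(\bar A_0+\bar B_3)$, and $|\bar A_0+\bar B_3|$ is a base-point-free pencil on $\bar X$ — the conic bundle from which $\bar B_1,\bar B_2$ were chosen. As $\pi$ is finite, $\pi^*|\bar A_0+\bar B_3|\subseteq|2(A_0+B_3)|$ is base-point free; a short exact sequence along $A_0+B_3$ (with $h^0(A_0+B_3)=h^0(A_0)=1$ by Proposition~\ref{prop: trimming lemma}) shows $h^0(2(A_0+B_3))=2$, so $|2(A_0+B_3)|=\pi^*|\bar A_0+\bar B_3|$ and its general member $F$ is the $(\ZZ/2)^2$-cover of a general conic, a smooth connected curve; write $g\colon X\to\P^1$ for the induced fibration.

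\emph{Inductive step ($\ell\ge3$).} By induction $D_{\ell-2}$ is effective, hence so is $D_\ell=D_{\ell-2}+F'$ for $F'\in|2(A_0+B_3)|$; since $D_{\ell-2}\cdot 2(A_0+B_3)=0$, every component of a chosen effective representative of $D_{\ell-2}$ lies in a fibre of $g$. Pick $F\in|2(A_0+B_3)|$ general, so that $F$ is smooth and connected and is disjoint both from $A_0\cup B_3$ and from that representative of $D_{\ell-2}$. Then $\mathcal O_F(D_\ell)\cong\mathcal O_F$, and $F$ is not a fixed component of $|D_\ell|$ since $D_{\ell-2}+F'$ avoids $F$ for $F'\ne F$; hence the sequence
\[
 0\to\mathcal O_X(D_{\ell-2})\to\mathcal O_X(D_\ell)\to\mathcal O_F(D_\ell)\to0
\]
gives $h^0(D_\ell)=h^0(D_{\ell-2})+h^0(\mathcal O_F)=h^0(D_{\ell-2})+1$.

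\emph{Base cases and the main obstacle.} For $\ell=1$ the effectiveness test of Subsection~\ref{subsec: effectiveness test} identifies $D$ with $B_2$ (if $c_1=0$) or $B_1$ (if $c_1=1$), and the sequence $0\to\mathcal O_X(B_3+\tau)\to\mathcal O_X(D)\to\mathcal O_{A_0}(D)\to0$, in which $\mathcal O_{A_0}(D)=\mathcal O_{A_0}$ and $h^0(B_3+\tau)=0$, gives $h^0(D)=1$. For $\ell=2$ either $D=2(A_0+B_3)$ with $h^0=2$ (as above) if $c_1=0$, or $D=B_1+B_2$ (Remark~\ref{rmk: Some linEquiv}) if $c_1=1$; in the latter case $B_1\cdot B_2=0$, so $0\to\mathcal O_X(B_2)\to\mathcal O_X(B_1+B_2)\to\mathcal O_{B_1}(B_1)\to0$ gives $h^0(B_1+B_2)=h^0(\mathcal O_X(B_2))=1$ \emph{provided} the normal bundle $\mathcal O_{B_i}(B_i)$ is a nontrivial $2$-torsion line bundle. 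This last assertion is the real point of the proof: it is $2$-torsion because $\mathcal O_{B_i}(2B_i)=\pi^*\mathcal O_{\bar B_i}(\bar B_i)=\mathcal O_{B_i}$, and nontrivial because otherwise $|B_i|$ would be a pencil, contradicting that by Theorem~\ref{thm: PicX and EffX}(2) the numerical class $[2;0,0,1]$ contains only finitely many effective divisors. (Equivalently, $B_1,B_2$ are multiple fibres of $g$; this is the same geometric fact that makes the general fibre $F$ connected in the inductive step.) Assembling the base cases with the inductive step produces the claimed values of $h^0(D)$ for all $\ell\ge1$.
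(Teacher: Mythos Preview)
Your overall strategy is sound and close to the paper's: both use the recursion $h^0(D_\ell)=h^0(D_{\ell-2})+1$ coming from a general $F\in|2(A_0+B_3)|$. The paper, however, runs this only for $\tau=0$ and handles the other torsions by going \emph{up} (e.g.\ $0\to\mathcal O_X(D)\to\mathcal O_X((\ell+2)(A_0+B_3))\to\mathcal O_{B_1}\oplus\mathcal O_{B_2}\to0$ when $\tau=(00\ 00\ 10)$), thereby reducing everything to the $\tau=0$ computation. You instead run the same downward induction for every admissible $\tau$, which is cleaner but shifts the work into the base case $\ell=2$, $\tau=(00\ 00\ 10)$, where you must show $h^0(B_1+B_2)=1$ directly.

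That base case is where there is a genuine gap. You argue that $\mathcal O_{B_i}(B_i)$ is nontrivial because otherwise $|B_i|$ would be a pencil, ``contradicting that by Theorem~\ref{thm: PicX and EffX}(2) the numerical class $[2;0,0,1]$ contains only finitely many effective divisors.'' But Theorem~\ref{thm: PicX and EffX}(2) only says that every effective divisor is \emph{linearly equivalent} to some $\sum z_i\sZ_i$; it bounds the number of linear equivalence classes in a numerical class, not the number of effective divisors. A pencil $|B_i|$ would contain infinitely many effective curves all in the single class of $B_i$, and there is no contradiction with that theorem.

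The fix is immediate from your own work. You already established $h^0(B_i)=1$ in the $\ell=1$ case. If $\mathcal O_{B_i}(B_i)\cong\mathcal O_{B_i}$, then $0\to\mathcal O_X\to\mathcal O_X(B_i)\to\mathcal O_{B_i}\to0$ together with $q(X)=0$ would force $h^0(B_i)=2$, a contradiction; hence $\mathcal O_{B_i}(B_i)$ is nontrivial $2$-torsion and $h^0(B_1+B_2)=h^0(B_2)=1$ as you want. The same sequence with $F$ in place of $B_i$ (using $h^0(F)=2$) also gives $h^0(\mathcal O_F)=1$, which is precisely the connectedness of the general fibre you invoke in the inductive step; so that assertion, which you left as a parenthetical, is justified by the same mechanism rather than by the multiple-fibre heuristic.
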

\begin{proof}
    Symmetric coordinates of $\ell(A_0+B_3)$ are
    \[
        \begin{array}{ c | r r r | r r r}
            \Bigl( 2\ell & \bigl(0\ 00) & \bigl(0\ 00) & \bigl(\ell\ \ell0) & (0\ 00) & (0\ \ell0) & (\ell\ 00) \Bigr)
        \end{array}.
    \]
    Using \eqref{eq: torsion dependency}, one can easily see that $D$ is of the reduced form if and only if $\tau$ is as in the statement.

    Suppose $\ell$ is even. Then, either $\tau = 0$ or $\tau = B_2 - B_1$. If $\tau = 0$, we consider the short exact sequence
    \[
        0 \to \mathcal O_X(D-F) \to \mathcal O_X(D) \to \mathcal O_F(D) \to 0,
    \]
    where $F \in \lvert 2(A_0+B_3) \rvert $ is a general member. Then, $F$ does not intersect with $A_0 + B_3$, hence we have $\mathcal O_F(D) = \mathcal O_F$. Furthermore, $\lvert D \rvert$ does not contain $F$ as a base locus, so the map $H^0(\mathcal O_X(D)) \to H^0(\mathcal O_F)$ is surjective. This shows that $h^0(D) = h^0(D-F) + 1$. Since $D-F = (\ell-2)(A_0+B_3)$, we have
    \[
        h^0( D ) = h^0( 2(A_0+B_3) ) + \frac {\ell-2} 2 = \frac \ell 2 + 1.
    \]
    If $\tau = B_2 - B_1$, then $D = (\ell-2) (A_0+B_3) + B_1 + B_2$. Since $D + B_1 + B_2 = (\ell+2)(A_0+B_3)$, we have the short exact sequence
    \[
        0 \to \mathcal O_X(D) \to \mathcal O_X((\ell+2)(A_0+B_3)) \to \mathcal O_{B_1} \oplus \mathcal O_{B_2} \to 0.
    \]
    Since $\lvert (\ell+2)(A_0+B_3) \rvert$ contains $(\frac \ell 2 +1) B_1$ and $ (\frac \ell 2 + 1)B_2$, the map $H^0(\mathcal O_X( (\ell+2) (A_0+B_3) )) \to H^0( \mathcal O_{B_1} ) \oplus H^0(\mathcal O_{B_2})$ is surjective. This shows that $h^0(D) = h^0((\ell+2) (A_0+B_3)) - 2 = \frac \ell 2$.

    Now, suppose $\ell$ is odd. Either $\tau = B_2 - (A_0+B_3)$ or $\tau = B_1 - (A_0+B_3)$, thus $D = (\ell-1)(A_0+B_3) + B_i$ for $i \in \{1,2\}$. Since $D+ B_i = (\ell+1)(A_0+B_3)$, we have the short exact sequence
    \[
        0 \to \mathcal O_X(D) \to \mathcal O_X((\ell+1)(A_0+B_3)) \to \mathcal O_{B_i} \to 0,
    \]
    from which we read $h^0(D) = h^0( (\ell+1) (A_0+B_3)) - 1 = \frac{\ell+1}{2}$.
\end{proof}

\begin{proposition}\label{prop: h^0 of [2l+1;0,0,l-1]}
    If $D = \ell(A_0+B_3) + C_0 + \tau \in [2\ell+1;\, 0,0,\ell-1]$ is of the reduced form, then $\tau = (00\ 00\ {**})$. For $\ell \geq 1$, we have
    \begin{equation}\label{eq: [2l+1;0,0,l-1]}
        h^0(D) = \left\{
            \begin{array}{l@{\qquad}l}
                \lfloor \frac \ell 2 \rfloor + 1 & \text{if }\tau = (00\ 00\ 00) \\
                 \lfloor \frac{\ell-1}{2} \rfloor + 1 & \text{if }\tau = (00\ 00\ {*1}) \\
                \lfloor \frac{\ell}{2} \rfloor & \text{if }\tau = (00\ 00\ 10).
            \end{array}
        \right.
    \end{equation}
\end{proposition}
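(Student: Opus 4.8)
The first assertion is a bookkeeping check with symmetric coordinates. By Proposition~\ref{prop: h^0 of [2l;0,0,l]} the divisor $\ell(A_0+B_3)$ has $\Pic A_0$‑ and $\Pic B_0$‑components $(0\ 00)$, and by Table~\ref{table: symCoords} adding $C_0$ leaves these two components unchanged; hence the $A_0[2]$‑ and $B_0[2]$‑components of $D$ coincide with those of $\tau$. Since $(D.A_0)=(D.B_0)=0$, Definition~\ref{def: reduced form} forces both to be $00$, i.e. $\tau=(00\ 00\ {**})$.

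For the value of $h^0(D)$ the plan is to use the short exact sequence
\[
    0 \to \mathcal O_X\!\bigl(\ell(A_0+B_3)+\tau\bigr) \to \mathcal O_X(D) \to \mathcal O_{C_0}(D) \to 0 .
\]
The left‑hand term lies in the class $[2\ell;\,0,0,\ell]$, and $h^0$ of it is computed by Proposition~\ref{prop: h^0 of [2l;0,0,l]}: directly when $\ell(A_0+B_3)+\tau$ is of reduced form — which, by \eqref{eq: torsion dependency}, happens precisely when $\ell$ is even with $c_2=0$ or $\ell$ is odd with $c_2=1$ — and otherwise after trimming $B_3$ and then $A_0$ via Proposition~\ref{prop: trimming lemma}, which replaces $\ell(A_0+B_3)+\tau$ by $(\ell-1)(A_0+B_3)+\tau$. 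Running through the two parities of $\ell$ and the three values of $\tau$, one checks in each case that $h^0\bigl(\ell(A_0+B_3)+\tau\bigr)$ equals the right‑hand side of \eqref{eq: [2l+1;0,0,l-1]}. So it remains to prove $h^0(D)=h^0\bigl(\ell(A_0+B_3)+\tau\bigr)$, equivalently that $C_0$ lies in the base locus of $|D|$.

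The heart of the matter is thus to show that every effective divisor linearly equivalent to $D$ contains $C_0$. Writing such a divisor as $\sum z_i\sZ_i$ with $z_i\ge 0$ (Theorem~\ref{thm: PicX and EffX}(2)), I claim $z_{C_0}\ge 1$. Indeed $(D.B_0)=0$ reads $z_{B_0}=z_{A_3}+z_{C_3}+z_{A_1}+z_{A_2}$, and substituting this into $(D.A_3)=1$ gives $z_{C_0}+z_{C_1}+z_{C_2}+z_{C_3}+z_{A_1}+z_{A_2}=1$; if $z_{C_0}=0$ then exactly one of the remaining five is $1$, yet vanishing of the $A_0[2]$‑component of $D$ forces $z_{C_1}\equiv z_{C_2}\equiv z_{C_3}\pmod 2$ and vanishing of the $B_0[2]$‑component forces $z_{A_1}\equiv z_{A_2}\equiv z_{A_3}\pmod 2$, each of which is incompatible with a single $1$ among $z_{C_1},z_{C_2},z_{C_3}$, resp. $z_{A_1},z_{A_2},z_{A_3}$ — a contradiction. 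Thus every $\sZ_i$‑expression of the class $[D]$ uses $C_0$. Granting that every irreducible curve on $X$ other than $C_0$ has a class admitting an $\sZ_i$‑expression avoiding $C_0$ (the only irreducible curves not among the $\sZ_i$ being the general fibres of the three fibrations coming from the pencils $|2(A_0+B_3)|$, $|2(B_0+C_3)|$, $|2(C_0+A_3)|$, whose classes are twice a sum of two $\sZ_i$'s), no effective divisor $\equiv D$ can avoid $C_0$ as a component, so $C_0\subseteq\operatorname{Bs}|D|$ and $h^0(D)=h^0\bigl(\ell(A_0+B_3)+\tau\bigr)$. For small $\ell$ the left‑hand term of the sequence may fail to be effective, but then the same computation gives $h^0(D)=0$, as predicted; no separate base case is needed.

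The delicate point is the geometric input in the last step: passing from ``every $\sZ_i$‑expression of $[D]$ involves $C_0$'' to ``every member of $|D|$ has $C_0$ as a component'' requires control over which irreducible curves can occur as components of an effective divisor on $X$ — essentially the classification of irreducible curves on a primary Burniat surface. One could sidestep this by instead running the effectiveness test of Subsection~\ref{subsec: effectiveness test} on the at most eight candidates obtained from $D$ and from $D-C_0$ and comparing the two outcomes, at the cost of losing the uniform mechanism above.
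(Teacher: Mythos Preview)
Your approach differs from the paper's: the paper never argues that $C_0$ is a fixed component, but instead runs an induction on $\ell$ by restricting to a \emph{general} fibre $F\in\lvert 2(A_0+B_3)\rvert$, proving $h^0(\mathcal O_F(D))=h^0(\mathcal O_F(C_0))=1$ and observing that, since the members of $\lvert F\rvert$ sweep out $X$, the map $H^0(\mathcal O_X(D))\to H^0(\mathcal O_F(C_0))$ is nonzero whenever $\lvert D\rvert\neq\emptyset$; this gives $h^0(D)=h^0(D-F)+1$ and reduces $\ell$ by~$2$, with $\ell=1,2$ treated by hand.

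Your route has a genuine gap at exactly the place you flag as ``delicate''. The intersection--and--parity argument is clean and correct, but what it proves is only that every nonnegative $\sZ_i$-expression of the \emph{class} $[D]$ has $z_{C_0}\ge 1$; equivalently, $h^0(D)>0\Rightarrow h^0(D-C_0)>0$. To upgrade this to $h^0(D)=h^0(D-C_0)$ you need that every actual member $E\in\lvert D\rvert$ contains $C_0$ as a component, and your bridge --- that the only irreducible curves on $X$ besides the $\sZ_i$ are general fibres of the three pencils --- is false: $X$ is of general type and carries infinitely many irreducible curves (e.g.\ irreducible members of $\lvert mK_X\rvert$), and nothing in the paper supplies the weaker statement you really need, namely that every irreducible curve $C\neq C_0$ admits a $\sZ_i$-expression with $z_{C_0}=0$. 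Without that, an $E\in\lvert D\rvert$ could in principle be built from irreducible components each of which individually avoids $C_0$ but whose $\sZ_i$-expressions happen to involve $C_0$; your argument does not exclude this. The suggested fallback via the effectiveness test of Subsection~\ref{subsec: effectiveness test} also does not close the gap: that test decides whether $D$ and $D-C_0$ are effective, which you already know, not whether $h^0(D)=h^0(D-C_0)$.

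Two minor remarks. First, your check of the reduced-form condition only inspects $A_0[2]$ and $B_0[2]$; for $\ell=1$ one also has $(D.C_0)=0$, and the $C_0[2]$-component forces $\tau=(00\ 00\ 10)$ (the paper's statement only asserts a necessary condition, so your conclusion is still correct, but the argument as written is incomplete). Second, the bookkeeping reducing $h^0\bigl(\ell(A_0+B_3)+\tau\bigr)$ to Proposition~\ref{prop: h^0 of [2l;0,0,l]} via trimming is correct and, modulo the base-locus issue, would give a pleasant one-step proof; one way to salvage your strategy is to bound $h^0(D)$ from above (e.g.\ by embedding $\mathcal O_X(D)$ into a sheaf whose $h^0$ you know), rather than trying to classify components of $\lvert D\rvert$.
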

\begin{proof}
    The symmetric coordinates of $D-\tau$ are
    \[
        \begin{array}{ c | r r r | r r r}
            \Bigl( 2\ell+1 & \bigl(0\ 00) & \bigl(0\ 00) & \bigl(\ell-1\ \ell0) & (1\ 10) & (1\ \ell0) & (\ell\ 00) \Bigr)
        \end{array}.
    \]
    From the description it is clear that $D$ is of the reduced form only if $\tau = (00\ 00\ {**})$ except when $\ell=1$. If $\ell=1$, $D$ is of the reduced form if and only if $\tau = (00\ 00\ 10)$.

    Let $F \in \lvert 2(A_0+B_3)\rvert$ be a general member. In the short exact sequence
    \[
        0 \to \mathcal O(C_0-A_0) \to \mathcal O(A_0+2B_3+C_0) \to \mathcal O_F(C_0) \to 0,
    \]
    we have $h^2(C_0-A_0) = h^0(K_X+A_0-C_0)= 0$; indeed, $K_X+A_0 = A_1+A_2+A_3+2B_0$ and $h^0(K_X+A_0)=1$\,(cf. Proposition~\ref{prop: A0+KX [7;011]}). Since $\chi(C_0-A_0)=0$, $h^p(C_0-A_0)=0$ for all $p$. Moreover, $A_0+2B_3+C_0$ has the principal divisor as the reduced form, so $h^0(\mathcal O_F(C_0)) = h^0(A_0+2B_3+C_0)=1$. 
    
    Since $\tau = (00\ 00\ {**}) \in \{ 0,\, B_2-(B_3+A_0),\, B_1-(B_3+A_0),\, B_2-B_1\}$, $\mathcal O_F(\tau) = \mathcal O_F$. In the short exact sequence,
    \begin{equation}\label{eq: [2l+1;0,0,l-1], SES from general fiber}
        0 \to \mathcal O_X(D-F) \to \mathcal O_X(D) \to \mathcal O_F(C_0) \to 0,
    \end{equation}
    we have $h^0(D) \leq h^0(D-F) + 1$. Since the members of $\lvert F\rvert$ sweep out the whole surface $X$, the map $H^0(D) \to H^0(\mathcal O_F(C_0))$ is nonzero if $\lvert D \rvert$ is nonempty. In particular, $h^0(D) = h^0(D-F)+1$ if $h^0(D) > 0$. The truncated symmetric coordinates of $F$ are 
    \[
    \begin{array}{ c | r r r }
        \Bigl( 4 & \bigl(0\ 00) & \bigl(0\ 00) & \bigl(2\ 00)\Bigr)
    \end{array},
    \]
    hence if $h^0(D-F)$ satisfies \eqref{eq: [2l+1;0,0,l-1]} for $\ell$ substituted by $\ell-2$, then \eqref{eq: [2l+1;0,0,l-1]} holds for $h^0(D)$, with one potential exception when $\ell=3$ and $\tau = (00\ 00\ 10)$. In this exceptional case, we need to know $h^0(D) > 0$. It can be directly seen by $D = A_0 + B_3 + C_0 + B_1 + B_2$ in such a case.

    It remains to prove the statement for $\ell=1$ and $\ell=2$. Assume $\ell=1$. Since $(00\ 00\ 11) = B_1 - (A_0+B_3)$, $(00\ 00\ 01) = B_2 - (A_0+ B_3)$, and $(00\ 00\ 10) = B_2 - B_1$, we have
    \[
        D = \left \{
        \begin{array}{ll}
            A_0+B_3+C_0 & \text{if } \tau = (00\ 00\ 00) \\
            C_0 + B_1 & \text{if }\tau = (00\ 00\ 11) \\
            C_0 + B_2 & \text{if }\tau = (00\ 00\ 01) \\
            A_0+B_3+C_0+B_2-B_1 &\text{if }\tau = (00\ 00\ 10) \\
        \end{array}
        \right.
    \]
    It can be easily checked that $h^0(A_0+B_3+C_0) = h^0(C_0+ B_1) = h^0(C_0+B_2) = 1$ and $h^0(A_0+B_3+C_0+B_2 - B_1)=0$, verifying \eqref{eq: [2l+1;0,0,l-1]} for $\ell=1$. For $\ell=2$, we invoke the short exact sequence \eqref{eq: [2l+1;0,0,l-1], SES from general fiber}. If $\tau \neq (00\ 00\ 00)$, then $D-F = C_0+\tau$ is not effective. Hence, $h^0(D) = 1$. Moreover, $h^0(C_0+F) = 2$, namely, $h^0(D)=2$ for $\tau = (00\ 00\ 00)$. This verifies \eqref{eq: [2l+1;0,0,l-1]} for $\ell=2$.
\end{proof}

\begin{proposition}\label{prop: h^0 of [2l;0,1,l-1]}
    If $D = (\ell-1)(A_0+B_3) + (C_0+A_3) + \tau \in [2\ell;\, 0,1,\ell-1]$ is of the reduced form, then
    \[
        \tau = \left \{
        \begin{array}{ll}
            (00\ {*1}\ {**}) & \text{if }\ell \geq 2 \\
            (00\ {*1}\ {00}) & \text{if }\ell = 1.
        \end{array}
        \right.
    \]
    Moreover, we have $h^0(D) = \ell - 1$ if $\ell \geq 2$, and $h^0(D) = 1$ if $\ell=1$.
\end{proposition}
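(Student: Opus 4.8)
The plan is to separate the purely combinatorial question (which $\tau$ make $D$ of reduced form) from the cohomological content, dispose of the degenerate value $\ell=1$ by hand, and prove $h^0(D)=\ell-1$ for $\ell\ge2$ by induction on $\ell$. For the first part one only adds rows of Table~\ref{table: symCoords}: writing $\epsilon\equiv\ell-1\pmod2$, the divisor $D_0:=(\ell-1)(A_0+B_3)+(C_0+A_3)$ has $(D_0.K_X)=2\ell$, with $\Pic A_0$-, $\Pic B_0$-, $\Pic C_0$-components $(0,00)$, $(1,10)$, $(\ell-1,\epsilon 0)$ and $\Pic A_3$-, $\Pic B_3$-, $\Pic C_3$-components $(0,10)$, $(1,\epsilon 0)$, $(\ell-1,00)$. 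Thus every $D=D_0+\tau$ has $(D.A_0)=(D.A_3)=0$, $(D.B_0)=(D.B_3)=1$ and $(D.C_0)=(D.C_3)=\ell-1$. Being of reduced form forces the $A_0[2]$- and $A_3[2]$-components of $D$ to vanish; since those of $D_0$ are $00$ and $10$, and \eqref{eq: torsion dependency} gives $\tau|_{A_0[2]}=a_1a_2$, $\tau|_{A_3[2]}=(a_1+b_2)a_2$, we get $a_1=a_2=0$ and then $b_2=1$, i.e. $\tau=(00\,{*1}\,{**})$; when $\ell=1$ one also needs the $C_0[2]$- and $C_3[2]$-components of $D$ to vanish, which forces $c_1=c_2=0$, i.e. $\tau=(00\,{*1}\,00)$.

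\textbf{The case $\ell=1$.} Here the reduced divisors are $D=(C_0+A_3)+\tau$ with $\tau\in\{(00\,01\,00),(00\,11\,00)\}$; using the linear equivalences $A_2=(C_0+A_3)+(00\,01\,00)$ and $A_1=(C_0+A_3)+(00\,11\,00)$ (both checked on symmetric coordinates; the first is already used in Subsection~\ref{subsec: num class identification}), $D\in\{A_1,A_2\}$. Each $A_i$ is an effective curve, so $h^0(D)\ge1$; for the reverse, use $0\to\mathcal O_X(A_i-A_0)\to\mathcal O_X(A_i)\to\mathcal O_{A_0}(A_i)\to0$. One has $(A_i.A_0)=0$ with trivial $A_0[2]$-component, so $\mathcal O_{A_0}(A_i)=\mathcal O_{A_0}$ and $h^0(\mathcal O_{A_0}(A_i))=1$, while $A_i-A_0$ is $C_3$ twisted by a \emph{nonzero} torsion (direct computation), hence not effective because a line bundle of negative degree on the elliptic curve $C_3$ has no sections; so $h^0(D)\le 0+1$.

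\textbf{The case $\ell\ge2$: upper bound by induction.} I would use $0\to\mathcal O_X(D-B_3)\to\mathcal O_X(D)\to\mathcal O_{B_3}(D)\to0$. Since $(D.B_3)=1$ and $B_3$ is elliptic, $h^0(\mathcal O_{B_3}(D))=1$; and $(D-B_3.A_0)=-1<0$, so Proposition~\ref{prop: trimming lemma}\,(1) gives $h^0(D-B_3)=h^0(D-B_3-A_0)$, where $D-B_3-A_0=(\ell-2)(A_0+B_3)+(C_0+A_3)+\tau$ is again of reduced form (the conditions are inherited since $(\,\cdot\,.C_0)=(\,\cdot\,.C_3)=\ell-2\ge1$ for $\ell\ge3$) and belongs to the same family with $\ell$ replaced by $\ell-1$. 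By induction $h^0(D-B_3)=\ell-2$, hence $h^0(D)\le\ell-1$, the base case $\ell=2$ being checked directly (there $D$ becomes an explicit effective divisor — a sum of two or three of the curves $\sZ_i$ — with $h^0(D)=1$, via the same trimming sequence which then lands in the $\ell=1$ analysis or shows $D-B_3-A_0$ non-effective).

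\textbf{Lower bound: the main obstacle.} The remaining — and hardest — point is $h^0(D)\ge\ell-1$, i.e. that $B_3$ is not a base component of $|D|$ at each stage, equivalently that the connecting map $H^0(\mathcal O_{B_3}(D))\to H^1(\mathcal O_X(D-B_3))$ vanishes. The cleanest route I see is to restrict instead to a general member $F$ of the pencil $|2(A_0+B_3)|=|\pi^*\bar B_1|$: $F$ is a smooth hyperelliptic curve of genus $3$ (of its three intermediate double covers over $\bar B_1\cong\P^1$ the $L_C$-cover is branched at only two points, hence rational), and $A_0,B_3,A_3$ as well as the entire torsion subgroup generated by $A_2-A_1$, $B_2-B_1$ and $(A_0+B_3)-B_2$ restrict trivially to the general $F$, so $\mathcal O_F(D)=\mathcal O_F(A_2\cap F)$, which is a fibre of the hyperelliptic map and hence the $g^1_2$; thus $h^0(\mathcal O_F(D))=2$, and feeding this into $0\to\mathcal O_X(D-F)\to\mathcal O_X(D)\to\mathcal O_F(D)\to0$ (with $[D-F]$ in the family, $\ell\mapsto\ell-2$) and controlling the connecting map $H^0(\mathcal O_F(D))\to H^1(\mathcal O_X(D-F))$ by Serre duality against the vanishing $h^0(K_X-D+F)=0$ pins the exact value. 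Alternatively — and probably most cleanly — one pushes everything down along $X\to Y_C$ onto the $L_C$-intermediate bidouble-cover component, a conic bundle over $\P^1$ (a rational surface) on which $\mathcal O_X(D)$ descends, up to a twist by fibre classes, to a line bundle whose sections are counted by an elementary computation equal to $\ell-1$.
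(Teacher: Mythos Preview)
Your reduced-form analysis and the $\ell=1$ case are correct and essentially match the paper. Your upper-bound induction via restriction to $B_3$ (then trimming $A_0$) is sound for $\ell\ge3$, though note that at $\ell=2$ the trimming $h^0(D)\le h^0(D-B_3-A_0)+1$ only gives $h^0(D)\le 2$ when $\tau=(00\,{*1}\,00)$, since then $D-B_3-A_0=A_i$ has $h^0=1$; you need a different restriction (the paper uses $A_3$, where $h^0(D-A_3)=0$ can be checked) to close this base case.

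The genuine gap is the lower bound. You correctly identify restriction to a general $F\in|2(A_0+B_3)|$ as the key move --- this is exactly what the paper does --- but your proposed control of the connecting map fails. The vanishing $h^0(K_X-D+F)=0$ is $h^2(D-F)=0$, which only forces $H^1(D)\to H^1(\mathcal O_F(D))$ to be surjective; it says nothing about the map on $H^0$. What you actually need is $h^1(D-F)=0$, but since $\chi=0$ in this family and $h^0(D-F)=\ell-3>0$ for $\ell\ge4$, one has $h^1(D-F)\ge\ell-3>0$, so no abstract vanishing is available. (Your claim that the relevant torsions restrict trivially to $F$ and that $\mathcal O_F(D)$ is the $g^1_2$ is also not justified: disjointness from $F$ only gives degree zero, not triviality.) The paper resolves this constructively: it exhibits, for each $\tau$, two explicit effective members of $|D|$ whose restrictions to $F$ are $A_1|_F$ and $A_2|_F$ respectively (using the linear equivalences of Remark~\ref{rmk: Some linEquiv} to swap $A_i\leftrightarrow A_{3-i}$ inside $|D|$), forcing the image of $H^0(\mathcal O_X(D))\to H^0(\mathcal O_F(D))$ to be $2$-dimensional and hence the restriction to be surjective. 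This gives $h^0(D)=h^0(D-F)+2$ for $\ell\ge4$ and reduces to the base cases $\ell=2,3$, which are handled directly. Your pushforward-to-$Y_C$ alternative might work but would require substantial additional development.
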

\begin{proof}
    The symmetric coordinates of $D-\tau$ are
    \[
        \begin{array}{ c | r r r | r r r}
            \Bigl( 2\ell & \bigl(0\ 00) & \bigl(1\ 10) & \bigl(\ell-1\ (\ell-1)0) & (0\ 10) & (1\ (\ell-1)0) & (\ell-1\ 00) \Bigr)
        \end{array}.
    \]
    Using \eqref{eq: torsion dependency}, one can see that $D$ is of the reduced form if and only if
    \[
        \tau = (00\ {*}1\ {**}\ \vert\ 01\ {**}\ {**})
    \]
    when $\ell \geq 2$, and $\tau = (00\ {*1}\ 00)$ when $\ell=1$.

    If $\ell=1$, then either $\tau = A_1 - (C_0+A_3)$ or $\tau = A_2 - (C_0+A_3)$, thus $D = A_i$ for $i \in \{1,2\}$. To compute $h^0(A_i)$, consider the short exact sequence
    \[
        0 \to \mathcal O_X(A_i) \to \mathcal O_X(2A_i) \to \mathcal O_{A_i} \to 0.
    \]
    Recall that $\lvert 2A_i \rvert$ contains $2(C_0+A_3)$, hence $\mathcal O_{A_i}(2A_i) = \mathcal O_{A_i}$ and the map $H^0(\mathcal O_X(2A_i)) \to H^0(\mathcal O_{A_i})$ is surjective. By Proposition~\ref{prop: h^0 of [2l;0,0,l]}, $h^0(2A_i) = h^0(2(C_0+A_3))= 2$, hence $h^0(A_i) = h^0(2A_i) - 1 = 1$.

    Assume $\ell \geq 2$. Since $(00\ {*1}\ 00) \in \{ A_1 - (C_0+A_3), \, A_2 - (C_0+A_3)\}$ and $(00\ 00\ {**}) \in \{ 0,\, B_2-B_1,\, B_1 - (A_0+B_3), \, B_2 - (A_0+B_3) \}$, we have that
    \[
        D = \left \{
            \begin{array}{ll}
                (\ell-1)(A_0+B_3) + A_i & \text{if }\tau = (00\ {*1}\ 00)  \\
                (\ell-2)(A_0+B_3) + A_i + B_j & \text{if } \tau = (00\ {*1}\ {*1} ) \\
                (\ell-1)(A_0+B_3) + A_i + B_1 - B_2 & \text{if } \tau = (00\ {*1}\ {10} )
            \end{array}
        \right.
    \]
    for suitable $i,j \in \{1,2\}$.
    
    Let $F \in \lvert 2(A_0+B_3) \rvert$ be a general member. Since $F$ is disjoint from $A_0, A_3, B_0, B_1, B_2, B_3$, we have $\mathcal O_F(D) = \mathcal O_F(A_i)$ or $\mathcal O_F(D) =\mathcal O_F(A_{2-i})$. In particular, $h^0(\mathcal O_F(D)) \leq \deg \mathcal O_F(D) = 2$. We claim that if $\ell \geq 4$, then the image of the map $H^0(\mathcal O_X(D)) \to H^0(\mathcal O_F(D))$ is two dimensional, hence surjective. Using Remark~\ref{rmk: Some linEquiv}, we observe that
    \begin{align*}
        3(A_0+B_3) + A_i &= (A_0+B_3) + A_i + 2 B_{2-i} \\
        &= (A_3 + B_0) + A_{2-i} + B_1 + B_2,
    \end{align*}
    and that
    \begin{align*}
        (A_0+B_3) + A_i + B_j &= (A_3+B_0)+A_{2-i}+B_{2-j}.
    \end{align*}
    The linear system $\lvert (\ell-1)(A_0+B_3) + A_i\rvert $ contains $ (\ell-1)(A_0+B_3) + A_i$ and $(\ell-4)(A_0+B_3) + (A_3+B_0) + A_{2-i} + B_1 + B_2$, whose restrictions to $F$ are $A_1 \big\vert_F$ and $A_{1} \big\vert_F$. It follows that the image of $H^0(\mathcal O_X(D)) \to H^0(\mathcal O_F(D))$ contains two sections whose associated divisors are $A_1\big\vert_F$ and $A_2 \big\vert_F$. This shows that the image of $H^0(\mathcal O_X(D)) \to H^0(\mathcal O_F(D))$ is two dimensional if $\tau = (00\ {*1}\ 00)$. The other cases can be proved similarly. Indeed, we can see that the linear system $\lvert (\ell-2)(A_0+B_3) + A_i + B_j \rvert$ contains
    \begin{equation}\label{eq: tmp: effective representatives 1}
         (\ell-2)(A_0+B_3) + A_i + B_j \quad \text{and}\quad (\ell-3) + (A_3+B_0) + A_{2-i} + B_{2-j},
    \end{equation}
    while $\lvert  (\ell-1)(A_0+B_3) + A_i + B_1 - B_2\rvert $ contains
    \begin{equation}\label{eq: tmp: effective representatives 2}
        (\ell-3) (A_0+B_3) + A_i + B_1 + B_2 \quad \text{and}\quad (\ell-2)(A_0+B_3) + (B_0+A_3) + A_{2-i}.
    \end{equation}
    These presentations together with the previous argument assert that $H^0(\mathcal O_X(D)) \to H^0(\mathcal O_F(D))$ is surjective. So far we proved that if $\ell \geq 4$ then $h^0(D) = h^0(D-F) + 2$. Thus, if we show $h^0(D)= \ell-1$ for $\ell=2$ and $\ell=3$, then the computation for higher $\ell$ follows by induction.

    Assume $\ell=2$. Depending on $\tau$, we have $D = A_0+B_3 + A_i$, $D=A_i+B_j$, or $D=B_0+A_3 +A_i$. We will show that $h^0(D) = 1$ for all the three cases. Indeed, we have $h^0(D-A_3)=0$ and $\mathcal O_X(D) \big\vert_{A_3} = \mathcal O_{A_3}$, hence from the short exact sequence
    \[
        0 \to \mathcal O_X(D-A_3) \to \mathcal O_X(D) \to \mathcal O_{A_3} \to 0
    \]
    we have $h^0(D) = 1 = \ell-1$ for $\ell=2$.

    Now, consider $\ell=3$. We note that the above surjectivity argument for $H^0(\mathcal O_X(D)) \to H^0(\mathcal O_F(D))$ still applies when $\tau \neq (00\ {*1}\ 00)$, as the divisors in \eqref{eq: tmp: effective representatives 1} and \eqref{eq: tmp: effective representatives 2} are members of the corresponding linear systems. Thus, if $\tau \neq (00\ {*1}\ 00)$, $h^0(D) = h^0(D-F) + 2$. One can directly show that neither $D-F = A_i + B_j - (A_0+B_3)$ nor $D-F = A_i + B_1 - B_2$ is effective, showing that $h^0(D) = 2$ for $\tau \neq (00\ {*1}\ 00)$. Now, assume $\tau = (00\ {*1}\ 00)$, i.e. $D = 2(A_0+B_3) + A_i$. From the short exact sequence
    \[
        0 \to \mathcal O_X(A_0 + 2B_3 + A_i) \to \mathcal O_X(D) \to \mathcal O_{A_0} \to 0,
    \]
    we have $h^0(D) = h^0(A_0+2B_3+A_i)+1$. Moreover, $h^0(A_0+2B_3 + A_i) = h^0(A_0+B_3 + A_i)$ by $(A_0 + 2B_3 + A_i \mathbin . B_3 ) < 0$, and it equals to $1$ as computed in the $\ell=2$ case above. This shows that $h^0(D) = 2 = \ell-1$ for $\ell = 3$.
\end{proof}

\begin{proposition}\label{prop: h^0 of [2l;0,l-1,1]}
    For $\ell = 1,2$, $[2\ell;\,0,\ell-1,1]$ and $[2\ell;\,0,1,\ell-1]$ are symmetric to each other. Assume $\ell \geq 3$. If $D = (\ell-1) (C_0+A_3) + (A_0+ B_3) + \tau \in [2\ell;\, 0,\ell-1,1]$ is of the reduced form, then
    \[
        \tau = \left \{
            \begin{array}{ll}
                (00\ {*1}\ {**}) & \text{if $\ell$ is even} \\
                (00\ {*0}\ {**}) & \text{if $\ell$ odd}.
            \end{array}
        \right.
    \]
    Moreover, we have
    \[
        h^0(D) = \left\{
            \begin{array}{ll}
                \frac{\ell}{2} & \text{if $\ell$ is even}\\
                \frac{\ell+1}{2} & \text{if $\ell$ is odd and }\tau \in \{ (00\ 00\ 00),\, (00\ 00\ 01), \, (00\ 00\ 11),\, (00\ 10\ 10) \} \\
                \frac{\ell-1}{2} & \text{if $\ell$ is odd and }\tau \in \{ (00\ 10\ 00),\, (00\ 10\ 01), \, (00\ 10\ 11),\, (00\ 00\ 10) \}.
            \end{array}
        \right.
    \]
\end{proposition}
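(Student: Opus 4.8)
The plan is to mimic the proof of Proposition~\ref{prop: h^0 of [2l;0,1,l-1]}, replacing the pencil of $A_0+B_3$ by that of $C_0+A_3$. Reading off Table~\ref{table: symCoords}, the divisor $D-\tau=(\ell-1)(C_0+A_3)+(A_0+B_3)$ has $(D.A_0)=(D.A_3)=0$ with $A_0[2]$-component $00$ and with $A_3[2]$-component $10$ for even $\ell$ and $00$ for odd $\ell$, while its $\Pic B_0$ and $\Pic B_3$ slots carry positive degree $\ell-1$ and its $\Pic C_0$, $\Pic C_3$ slots degree $1$. So if $D$ is of reduced form then $\tau|_{A_0[2]}=00$ and $\tau|_{A_3[2]}$ equals $10$ or $00$ according as $\ell$ is even or odd; substituting this into \eqref{eq: torsion dependency} forces $a_1=a_2=0$ together with $b_2=1$ for $\ell$ even and $b_2=0$ for $\ell$ odd, which is exactly the stated form of $\tau$.

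For the cohomology I would induct on $\ell$. By Remark~\ref{rmk: Some linEquiv}(1), $\lvert 2(C_0+A_3)\rvert=\lvert 2A_1\rvert=\lvert\pi^*\bar A_1\rvert$ is a base-point-free pencil, and a general member $F$ is a smooth irreducible curve: its image $\bar F\in\lvert\bar A_1\rvert$ is a general conic which (by the hexagon incidences) meets $\bar R$ precisely in the six curves $\bar B_0,\bar B_1,\bar B_2,\bar B_3,\bar C_1,\bar C_2$, each once and transversally, so $F$ is disjoint from $A_0,A_1,A_2,A_3,C_0,C_3$ and meets $B_3$ transversally in two points $p,q$. From $\bar F\cdot\bar R=6$ and Riemann--Hurwitz (or adjunction, using $F^2=0$ and $K_X\cdot F=4$) one gets $g(F)=3$, and in fact $F$ is hyperelliptic, the hyperelliptic involution being the restriction of the covering involution that fixes $B_0,\dots,B_3$; thus $p,q$ are two of the eight Weierstrass points of $F$. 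For $\ell\geq4$ set $D':=D-F=(\ell-3)(C_0+A_3)+(A_0+B_3)+\tau\in[2(\ell-2);0,\ell-3,1]$, which is again of reduced form. In the sequence
\[
    0\to\mathcal O_X(D')\to\mathcal O_X(D)\to\mathcal O_F(D)\to 0
\]
the sheaf $\mathcal O_F(D)=\mathcal O_F(p+q)\otimes\mathcal O_F(\tau)$ has degree $2$; it is effective because $D$ is, and a degree-$2$ bundle on a genus-$3$ curve has $h^0=1$ unless it is the hyperelliptic $g^1_2$ --- which would force $\mathcal O_F(\tau)$ to equal the single $2$-torsion class $\mathcal O_F(p-q)$ on $F$, and one checks this never occurs for the eight admissible $\tau$. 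Since $D$ is effective and $F$ moves, $H^0(D)\to H^0(\mathcal O_F(D))$ is nonzero, hence surjective, so $h^0(D)=h^0(D')+1$ for all $\ell\geq4$.

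It then remains to compute $\ell=3$, the class $[6;0,2,1]$, by hand. For each admissible $\tau$ one rewrites $D$ via Remark~\ref{rmk: Some linEquiv} and Table~\ref{table: symCoords}: for the four ``bad'' torsions --- e.g.\ $\tau=A_2-A_1=(00\ 10\ 00)$, where $D=A_1+A_2+(A_0+B_3)$ --- a short sequence argument (or the explicit list of members, noting $h^0(A_1+A_2)=1$ by the cyclic symmetry and Proposition~\ref{prop: h^0 of [2l;0,0,l]}, and that $A_0,B_3$ are forced into the base locus) gives $h^0(D)=1$; for the four ``good'' torsions one exhibits two independent members of $\lvert D\rvert$, so $h^0(D)=2$. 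A table of effective representatives, in the spirit of Proposition~\ref{prop: Nef 64-eff, exceptional cases, reduction}, makes this routine. Running the induction: subtracting $F=2(C_0+A_3)$ changes neither $\tau$ nor the parity of $\ell$, so the good/bad split at $\ell=3$ propagates and yields $(\ell\pm1)/2$ for odd $\ell$; the even chain descends to $[4;0,1,1]$, where Proposition~\ref{prop: h^0 of [2l;0,1,l-1]} (case $\ell=2$) gives $h^0=1$, whence $h^0(D)=1+(\ell-2)/2=\ell/2$ for even $\ell$. The cases $\ell=1,2$ are the asserted symmetry with that proposition.

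The main obstacle is the claim that $\mathcal O_F(D)$ is never the hyperelliptic bundle --- equivalently that the restriction of each admissible $\tau$ to a general fibre $F$ avoids the class $\mathcal O_F(p-q)$ --- which requires controlling how $\Tors \Pic X$ restricts to a general fibre of $\lvert\pi^*\bar A_1\rvert$. The cleanest way to bypass it seems to be to bound the increment by restricting $D$ instead to the genus-$2$ curve $A_2\sim C_0+A_3+(\text{$2$-torsion})$: then $(D.A_2)=(D.C_0)+(D.A_3)=1$, so $\mathcal O_{A_2}(D)$ has degree $1$ and hence $h^0(\mathcal O_{A_2}(D))\leq1$, giving $h^0(D)\leq h^0(D-A_2)+1$ without touching $\Pic F$ --- at the cost of a step $\ell\mapsto\ell-1$ that crosses parities and so needs more delicate bookkeeping. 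Everything else --- the reduced-form computation, the $\ell=3$ table, and the induction --- is routine.
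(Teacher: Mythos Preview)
Your reduced-form analysis and your base cases are on the right track, but the core of your induction has exactly the gap you yourself flag: you need $h^0(\mathcal O_F(D))=1$, i.e.\ $\mathcal O_F(\tau)\neq\mathcal O_F(p-q)$ for each of the eight admissible torsions, and ``one checks this never occurs'' is not a check. Determining how $\Tors\Pic X$ restricts to a general fibre of $\lvert\pi^*\bar A_1\rvert$ is a genuine computation in $\Pic^0$ of a genus-$3$ curve, and nothing in the paper's toolkit does it for you. Your $A_2$ workaround is more promising, but as written it is only an upper bound: you also have to know that $A_2$ (or $A_1$) is not in the base locus of $\lvert D\rvert$, or the restriction map may be zero, and the parity-crossing bookkeeping is not carried out.

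The paper bypasses all of this by never restricting to a moving curve. It proves the same reduction $h^0(D)=h^0(D-2(C_0+A_3))+1$ purely via base-locus arguments and iterated use of the trimming lemma (Proposition~\ref{prop: trimming lemma}). For $\ell=3$ one writes down an explicit member of $\lvert D\rvert$ for each admissible $\tau$ (essentially your ``table in the spirit of Proposition~\ref{prop: Nef 64-eff, exceptional cases, reduction}''): when $\tau\neq(00\ {*0}\ 10)$ this member omits $A_3$, so $h^0(D)=h^0(D-A_3)+1$; but $D-A_3$ is \emph{not} of reduced form (its $C_0[2]$-component is $10$ while $(D-A_3\mathbin.C_0)=0$), hence $h^0(D-A_3)=h^0(D-A_3-C_0)$, and two more trimming steps give $h^0(D-A_3)=h^0(D-2A_3-2C_0)$. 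For $\tau=(00\ {*0}\ 10)$ the explicit member omits $A_0$ instead, and the same trimming chain with $A_0,C_3$ yields $h^0(D)=h^0(D-2(A_0+C_3))+1$. For $\ell\geq4$ one simply adds $(\ell-3)A_1$ to every entry of the $\ell=3$ table (note $A_1\sim_{\sf num} C_0+A_3$), and the identical argument applies verbatim. This gives the reduction without ever touching $\Pic F$.
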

\begin{proof}
    The symmetric coordinates of $D - \tau$ are
    \[
        \begin{array}{ c | r r r | r r r}
            \Bigl( 2\ell & \bigl(0\ 00) & \bigl(\ell-1\ (\ell-1)0) & \bigl(1\ 10) & (0\ (\ell-1)0) & (\ell-1\ 10) & (1\ 00) \Bigr)
        \end{array},
    \]
    thus for $\ell \geq 2$, $D$ is of the reduced form if and only if $\tau = (00\ {*1}\ {**})$ when $\ell$ is even, and $\tau = (00\ {*0}\ {**})$ when $\ell$ is odd. We claim that if $\ell \geq 3$, then 
    \begin{equation}\label{eq: reduction for [2l;0,l-1,1]}
        h^0(D) = h^0(D - 2(C_0+A_3)) + 1.
    \end{equation}
    Assume $\ell = 3$. Table~\ref{table: reduced forms in [6;021]} presents explicit members in the linear system $\lvert D \rvert$.
    \[
        \begin{array}{c|c c c|c}
            \tau & D & \qquad & \tau  & D \\ \cline{1-2}\cline{4-5}
            (00\ 00\ 00) & (A_0+B_3) + 2A_1 & & (00\ 10\ 00) &  (A_0+B_3) + A_1 + A_2 \\
            (00\ 00\ 01) & B_2 + 2A_1 & & (00\ 10\ 01) & B_2 + A_1 + A_2 \\
            (00\ 00\ 11) & B_1 + 2A_1 & & (00\ 10\ 11) & B_1 + A_1 + A_2 \\
            (00\ 00\ 10) & A_1 + A_2 + (A_3+B_0) & & (00\ 10\ 10) & 2A_1 + (A_3+B_0)
        \end{array}
    \]\vspace{-0.5\baselineskip}\captionof{table}{Reduced forms in $[6;\, 0,2,1]$}\label{table: reduced forms in [6;021]}\smallskip    
    If $\tau \neq (00\ {*0}\ 10)$, then the members in Table~\ref{table: reduced forms in [6;021]} do not contain $A_3$. Hence, $h^0(D) = h^0(D-A_3) + 1$. Moreover, $D-A_3$ is not of the reduced form as $(D-A_3 \mathbin. C_0) = 0$ and $C_0[2]$-component of $D-A_3$ is not equal to $00$. Thus, $h^0(D - A_3) = h^0(D-A_3-C_0)$. Still, $D-A_3-C_0$ is not of the reduced form, and continuing the procedure, we arrive $h^0(D-A_3-C_0) = h^0(D - 2A_3 - C_0) = h^0(D- 2A_3 - 2C_0)$. This proves \eqref{eq: reduction for [2l;0,l-1,1]} for $\tau \neq (00\ {*0}\ 10)$. Suppose $\tau = (00\ {*0}\ 10)$. Since the base locus of $\lvert D \rvert$ does not contain $A_0$, we have $h^0(D) = h^0(D-A_0) + 1$. Similar to the previous argument, finding the reduced form of $D-A_0$ leads to
    \[
        h^0(D-A_0) = h^0(D-A_0-C_3) = h^0(D - 2A_0 - C_3) = h^0(D - 2( A_0+C_3) ) ,
    \]
    from which \eqref{eq: reduction for [2l;0,l-1,1]} follows. This establishes \eqref{eq: reduction for [2l;0,l-1,1]} for $\ell=3$. 

    For $\ell = 4$, we have $\tau = (00\ {*0}\ {**}) + (A_1 - (C_0+A_3))$, hence the table of reduced forms in $[D]$ can be obtained by adding $A_1$ to each entry of Table~\ref{table: reduced forms in [6;021]}. More generally for $\ell \geq 4$, the table of reduced forms in $[D]$ is obtained by adding $(\ell-3) A_1$ to each entry of Table~\ref{table: reduced forms in [6;021]}. Then, the argument used to prove \eqref{eq: reduction for [2l;0,l-1,1]} for $\ell=3$ applies in precisely the same way for $\ell \geq 4$.

    For $\ell = 2$, $h^0(D) = 1$ by Proposition~\ref{prop: h^0 of [2l;0,1,l-1]}. Thus, for $\ell \geq 4$ even, $h^0(D) = 1 + \frac{\ell-2}{2} = \frac \ell 2$. If $\ell = 1$, then $[D]$ contains precisely $4$ effective divisors, namely, $(A_3+B_0),\, B_1,\, B_2$, and $(A_0+B_3)$. These correspond to $\tau = (00\ 10\ 10)$, $\tau =  (00\ 00\ 11)$, $\tau = (00\ 00\ 01)$, and $ \tau = (00\ 00\ 00)$, respectively. Thus, for $\ell \geq 3$ odd, we have
    \[
        h^0(D) = \left\{
            \begin{array}{ll}
                \frac{\ell+1}{2} & \text{if }\tau \in \{ (00\ 00\ 00),\, (00\ 00\ 01), \, (00\ 00\ 11),\, (00\ 10\ 10) \} \\
                \frac{\ell-1}{2} & \text{if }\tau \in \{ (00\ 10\ 00),\, (00\ 10\ 01), \, (00\ 10\ 11),\, (00\ 00\ 10) \}.
            \end{array}
        \right.
    \]
    This completes the proof.
\end{proof}

\begin{proposition}\label{prop: h^0 of [2l;00l]+[KX]}
    Let $K_\tau = K_X + \tau $ and let $D = \ell( A_0 + B_3) + K_\tau \in [2\ell;\,0,0,\ell] + [K_X]$. For $\ell \geq 1$, we have
    \[
        h^0(D) = \left \{
            \begin{array}{ll}
                \lfloor \frac{3\ell+4}{2} \rfloor & \text{if }\tau = (00\ 10\ 00)  \\
                \lfloor \frac{3\ell+3}{2} \rfloor & \text{if }\tau = (00\ 10\ {*1}) \text{ or }\tau = (00\ 00\ 10) \\
                \lfloor \frac{3\ell+2}{2} \rfloor & \text{if } \tau \neq (00\ 10\ 00)\ \text{and}\ \tau=(00\ \epsilon0\ {* \bar\epsilon}),\ \text{where }\bar\epsilon = 1-\epsilon \\
                \lfloor \frac{3\ell+1}{2} \rfloor & \text{if }\tau = (00\ 00\ 00) \\
                \ell+1 & \text{otherwise}.
            \end{array}
        \right.
    \]
\end{proposition}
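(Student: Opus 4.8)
The plan is to exploit the genus-$3$ fibration $f\colon X\to\P^1$ induced by the base-point-free pencil $\lvert 2(A_0+B_3)\rvert=\lvert\pi^*(\bar A_0+\bar B_3)\rvert$. Its general fibre $F$ satisfies $F\equiv 2(A_0+B_3)$, $F^2=0$ and $F\cdot K_X=4$, so $g(F)=3$, while the curves $A_0,B_3,B_1,B_2$ are $f$-vertical: each meets $F$ in degree $0$, and the equivalences $2A_0+2B_3\sim 2B_1\sim 2B_2\sim F$ exhibit them inside (multiple) fibres. Hence $\mathcal{O}_F(F)=\mathcal{O}_F$ and $\mathcal{O}_F(\ell(A_0+B_3))=\mathcal{O}_F$, and by adjunction
\[
    \mathcal{O}_F(D)=\mathcal{O}_F(K_X+\tau)=\omega_F\otimes\mathcal{O}_F(\tau),
\]
a line bundle of degree $4$ on $F$. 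Riemann--Roch and Serre duality on $F$ then give
\[
    h^0\bigl(\mathcal{O}_F(D)\bigr)=2+h^0\bigl(\mathcal{O}_F(-\tau)\bigr)=
    \begin{cases}3&\text{if }\tau\rvert_F=\mathcal{O}_F,\\[2pt]2&\text{otherwise.}\end{cases}
\]
A direct check with symmetric coordinates (the $f$-vertical torsions being $B_2-B_1$, the $B_i-(A_0+B_3)$, and their combinations with $A_2-A_1$) shows $\tau\rvert_F=\mathcal{O}_F$ exactly for $\tau=(00\ {*0}\ {**})$; this set is precisely the union of cases (1)--(4), and it is what distinguishes the ``slope $\tfrac32$'' growth there from the ``slope $1$'' growth in case (5).

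\textbf{Reducing in $\ell$.} I would split on the parity of $\ell$. If $\ell=2m$ then $\mathcal{O}_X(D)=f^*\mathcal{O}_{\P^1}(m)\otimes\mathcal{O}_X(K_X+\tau)$, and if $\ell=2m+1$ then $\mathcal{O}_X(D)=f^*\mathcal{O}_{\P^1}(m)\otimes\mathcal{O}_X((A_0+B_3)+K_X+\tau)$; by the projection formula $h^0(D)=h^0(\P^1,\mathcal{O}_{\P^1}(m)\otimes\mathcal{E}_\tau)$ for a vector bundle $\mathcal{E}_\tau$ on $\P^1$ of rank $h^0(\mathcal{O}_F(D))\in\{2,3\}$, so that $h^0(D)=\sum_i\max\{0,m+a_i+1\}$ once $\mathcal{E}_\tau=\bigoplus_i\mathcal{O}_{\P^1}(a_i)$ is known. (Equivalently one may use the short exact sequence $0\to\mathcal{O}_X(D-F)\to\mathcal{O}_X(D)\to\mathcal{O}_F(D)\to 0$, which after twisting reduces $\ell$ to $\ell-2$ provided $H^0(\mathcal{O}_X(D))\to H^0(\mathcal{O}_F(D))$ is surjective.) The degree of $\mathcal{E}_\tau$ is forced by $\chi$: Leray together with relative duality gives $R^1f_*\mathcal{O}_X(K_\tau)\cong\bigl(f_*\mathcal{O}_X(-\tau)\bigr)^\vee(-2)$, where $f_*\mathcal{O}_X(-\tau)=0$ unless $\tau=(00\ {*0}\ {**})$, in which case it equals $\mathcal{O}_{\P^1}(-k_0)$ with $k_0$ the least integer such that $2k_0(A_0+B_3)+\tau$ is effective — a small value, read off case by case from Theorem~\ref{thm: PicX and EffX} and the linear equivalences of Remark~\ref{rmk: Some linEquiv} (with $k_0=0$ only for $\tau=0$). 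Combined with $h^0(K_\tau)$ from Proposition~\ref{prop: flexible torsions}, this fixes $\mathrm{rk}\,\mathcal{E}_\tau$ and $\deg\mathcal{E}_\tau$, and the remaining unknown — the splitting type — is pinned down by matching $h^0(D)$ against a handful of small-$\ell$ values.

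\textbf{Base cases and the main obstacle.} Those small-$\ell$ inputs are computed by hand: $h^0(K_\tau)$ (the case $\ell=0$) from Proposition~\ref{prop: flexible torsions}, and $h^0(D)$ for $\ell=1,2$ (and $\ell=3,4$ where needed) either by listing explicit effective representatives in $\lvert D\rvert$ — exactly in the style of Table~\ref{table: reduced forms in [6;021]} — or via short exact sequences along $A_0$, $B_3$, $C_0$ using Proposition~\ref{prop: trimming lemma}, Lemma~\ref{lem: D along (A0+B3)} and Kawamata--Viehweg vanishing (noting $A_0+B_3+C_0$ is nef and big, and that $\ell(A_0+B_3)+C_0$ etc.\ relate $D$ to classes already handled in Propositions~\ref{prop: h^0 of [2l;0,0,l]}--\ref{prop: h^0 of [2l;0,l-1,1]}). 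The hard part is the bookkeeping in the middle step: determining the exact splitting types of the rank-$2$ and rank-$3$ bundles $\mathcal{E}_\tau$ — equivalently, the precise $\ell$ from which $H^0(\mathcal{O}_X(D))\to H^0(\mathcal{O}_F(D))$ becomes surjective — since this is what separates the eight torsions in cases (1)--(4) into the four distinct constants $4,3,2,1$, and it must be verified to hold all the way down to $\ell=1$; by contrast the slope and leading term are essentially forced by the fibration alone.
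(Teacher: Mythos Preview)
Your approach is correct in outline and genuinely different from the paper's. The paper does not use the genus-$3$ fibration $f\colon X\to\P^1$ at all; instead it restricts to the reducible curve $A_0+B_3$ itself and reduces $\ell$ by~$1$ via
\[
0\to\mathcal O_X\bigl(D-(A_0+B_3)\bigr)\to\mathcal O_X(D)\to\mathcal O_{A_0+B_3}(D)\to 0,
\]
with the key input being the explicit value of $h^0(\mathcal O_{A_0+B_3}(D))$ from Lemma~\ref{lem: D along (A0+B3)} (which depends on the parity of $\ell$). Surjectivity of the restriction map is then checked case-by-case for $\ell=2,3$ by writing down explicit members of $|D|$, and the base case $\ell=1$ is handled directly. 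Your route---push forward along $f$ and read $h^0(D)$ as $h^0(\P^1,\mathcal O(m)\otimes\mathcal E_\tau)$ for a rank-$2$ or rank-$3$ bundle---is more structural: it immediately explains \emph{why} the answer has slope $\tfrac32$ exactly for $\tau\in(00\ {*0}\ {**})$ and slope $1$ otherwise, namely because those are exactly the torsions with $\tau|_F=\mathcal O_F$. (One small imprecision: $A_1,A_2$ are horizontal, so $A_2-A_1$ is not ``$f$-vertical'' as a curve class; what is true is the linear equivalence $A_2-A_1\sim(A_0+B_3)-(A_3+B_0)+B_1-B_2$ from Remark~\ref{rmk: Some linEquiv}(3), which expresses it in vertical classes.)

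What your approach does not save is the case analysis: determining the splitting type of $\mathcal E_\tau$ and of $f_*\mathcal O_X((A_0+B_3)+K_\tau)$ for each of the eight $\tau\in(00\ {*0}\ {**})$ is equivalent to computing $h^0(D)$ for all small~$\ell$, and you still need the explicit effective representatives and the short exact sequences along $A_0,B_3,C_0$ that the paper uses in its Cases~3-1 through~3-6. In effect, the paper's $\ell\to\ell-1$ step along the singular fibre $A_0+B_3$ encodes exactly the transition matrix between your two bundles $\mathcal E_\tau$ and $\mathcal E'_\tau$, so the two computations have the same content repackaged. Your version has the advantage of making the asymptotics transparent; the paper's has the advantage of needing only one recursion rather than two parity-dependent bundles.
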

\begin{proof}
    We use the short exact sequence
    \begin{equation}\label{eq: [2l;00l]+[KX], SES for reduction}
        0 \to \mathcal O_X(D-(A_0+B_3)) \to \mathcal O_X(D) \to \mathcal O_{A_0+B_3}(D) \to 0
    \end{equation}
    to reduce $\ell$ to smaller values and compute $h^0(D)$ inductively. To do this, we claim that if $\ell \geq 2$, then the map $H^0(\mathcal O_X(D)) \to H^0(\mathcal O_{A_0+B_3}(D))$ is surjective. Since
    \[
        \mathcal O_{A_0+B_3}(D) = \left\{
            \begin{array}{ll}
                \mathcal O_{A_0+B_3}(K_\tau) & \text{if $\ell$ is even} \\
                \mathcal O_{A_0+B_3}(A_0+B_3+K_\tau) & \text{if $\ell$ is odd},
            \end{array}
        \right.
    \]
    we need to divide into two case based on the parity of $\ell$. 
    
\begin{description}
    \item[Step 1] If $\ell$ is even.\par
    Once we prove the surjectivity for $\ell=2$, then the result extends to the higher $\ell$ as 
    \[
        \mathcal O_X(D) = \mathcal O_X(2(A_0+B_3) + K_\tau) \otimes \mathcal O_X( (\ell-2)(A_3+B_0)),
    \]
    and $\mathcal O_X((\ell-2) (A_3+B_0)) \big\vert_{A_0+B_3} = \mathcal O_X(A_0+B_3)$.
    Indeed, if $\bar s \in H^0(\mathcal O_{A_0+B_3}(D))$ and $s_2 \in H^0(\mathcal O_X(2(A_0+B_3) + K_\tau)$ satisfy $s_2\big\vert_{A_0+B_3} = \bar s$, then by taking $s' \in H^0((\ell-2)\mathcal O_X(A_3+B_0))$ which is nonzero along $A_0+B_3$, we see that $s_2 s' \in H^0(D)$ restricts to $\bar s$ up to multiplication by constants.
    
    In the short exact sequence
    \[
        0 \to \mathcal O_X(K_\tau) \to \mathcal O_X( (A_0+B_3) + K_\tau) \to \mathcal O_{A_0+B_3}( A_0+B_3+K_\tau) \to 0,
    \]
    we have $h^0( \mathcal O_{A_0 +B_3} ( A_0 + B_3 + K_\tau)) = 1 $ if and only if $\tau \neq (00\ {\epsilon 0}\ {*\epsilon})$\,(cf. Lemma~\ref{lem: D along (A0+B3)}). By Proposition~\ref{prop: flexible torsions}, we see that $h^1(A_0+B_3+K_\tau) = 0$ if $\tau$ is none of $(10\ 00\ 00)$, $(00\ 10\ 00)$, or $(00\ \epsilon0\ {*\epsilon})$. In such cases, we get the desired surjectivity from \eqref{eq: [2l;00l]+[KX], SES for reduction}. For $\tau = (10\ 00\ 00)$, we have $h^0(\mathcal O_{A_0+B_3}(K_\tau)) = 1$. Moreover, we have
    \[
        4(A_3 + B_0) + C_0 + C_3 \in \lvert D \rvert,
    \]
    so there exists a global section of $\mathcal O_X(D)$ which does not vanish along $A_0+B_3$. In particular, the map $H^0(\mathcal O_X(D)) \to H^0(\mathcal O_{A_0+B_3}(K_\tau))$ is nonzero, namely, surjective. If $\tau = (00\ 10\ 00)$, we have $h^0(\mathcal O_{A_0+B_3}(K_\tau)) = 2$. On the other hand, we can find two members of $\lvert D \rvert$ as follows.
    \[
        A_1 + A_2 + B_0 + B_1 + B_2 + B_3,\quad 2(A_3+B_0) + 2(C_3+B_0) + A_0 + A_3.
    \]
    The corresponding global sections of $\mathcal O_X(D)$ restrict to linearly independent sections along $A_0+B_3$ since the zero loci of these sections are precisely $B_3$ and $A_0$, respectively. This shows that $H^0(\mathcal O_X(D)) \to H^0(\mathcal O_{A_0+B_3}(K_\tau))$ is surjective. For $\tau = (00\ \epsilon 0\ {*\epsilon})$, we have $h^0(\mathcal O_{A_0+B_3}(K_\tau)) = 1$. Thus, it suffices to find a member of $\lvert D \rvert$ which does not contain $A_0+B_3$.
    \[
        \begin{array}{c|c}
            \tau & \text{a member of $\lvert D \rvert$} \\ \hline
            (00\ 00\ 00) & 2(A_3+C_0)+B_0+B_1+B_2+B_3 \\
            (00\ 10\ 01) & 2(A_3+B_0+C_0)+A_3+B_2+B_3 \\
            (00\ 00\ 10) & 2(A_3+C_0)+B_0+ 2 B_1 + B_3 \\
            (00\ 10\ 11) & 2(A_3+B_0+C_0)+A_3+B_1+B_3
        \end{array}
    \]
    This establishes the surjectivity of $H^0(\mathcal O_X(D)) \to H^0(\mathcal O_{A_0+B_3}(K_\tau))$ for all even $\ell \geq 2$.

    \item[Step 2] If $\ell$ is odd.\par
    Similar to {\sffamily Case 1}, it suffices to prove for $\ell=3$. If $\tau$ is not flexible, then from the short exact sequence
    \[
        0 \to \mathcal O_X(K_\tau) \to \mathcal O_X( A_0+B_3+ K_\tau) \to \mathcal O_{A_0+B_3}(A_0+B_3 + K_\tau) \to 0,
    \]
    $h^1(K_\tau)=0$ implies that $H^0( \mathcal O_X(A_0+B_3+ K_\tau)) \to H^0(\mathcal O_{A_0+B_3}(A_0+B_3+ K_\tau))$ is surjective. Since $D = (A_0+B_3+ K_\tau) + 2(A_3+B_0)$, this shows that $H^0(\mathcal O_X(D)) \to H^0(\mathcal O_{A_0+B_3}(A_0+B_3+K_\tau))$ is surjective as desired. If $\tau = (10\ 00\ 00)$, then the linear system $\lvert D\rvert$ contains
    \[
        A_0 + 2A_3 + B_0 + B_1 + B_2 + C_1 + C_2.
    \]
    Note that it does not contain $B_3$. In particular, the map $H^0(\mathcal O_X(D)) \to H^0(\mathcal O_{A_0+B_3}(A_0+B_3+K_\tau))$ is nonzero and hence surjective, as $h^0(\mathcal O_{A_0+B_3}(A_0+B_3+K_\tau)) = 1$. For $\tau = (00\ 10\ 00)$, a similar argument applies; we have $4(A_3+B_0) + 2C_0 + A_3 + B_3 \in \lvert D \rvert$ and $h^0(\mathcal O_{A_0+B_3}(A_0+B_3+K_\tau)) = 1$. Now, let $\tau = (00\ 00\ 10)$. In this case, we have $h^0(\mathcal O_{A_0+B_3}(A_0+B_3+K_\tau)) = 2$. On the other hand, we may pick two members in $\lvert D \rvert$ as follows:
    \[
        2B_0 + B_1 + B_2 + B_3 + A_1 + A_2 + A_3,\ \ \text{and}\ \ 4(A_3+B_0) + A_0+B_0 + 2C_3. 
    \]
    The former contains $B_3$ but not $A_0$, while the latter contains $A_0$ but not $B_3$. This shows that the corresponding global sections of $\mathcal O_X(D)$ restrict to linearly independent sections of $\mathcal O_{A_0+B_3}(K_\tau)$. Consequently, the map $H^0(\mathcal O_X(D)) \to H^0(\mathcal O_{A_0+B_3}(K_\tau))$ is surjective.

\end{description}

    So far we have established the equality $h^0(D) = h^0(D- (A_0+B_3)) + h^0(\mathcal O_{A_0+B_3}(D))$ for $\ell \geq 2$. Thus, it is crucial to handle the case $\ell=1$, which we will do from now on.
\begin{description}
    \item[Step 3] Reduction to $\ell=1$. \par
        After the reduction to $\ell=1$, we rely on the short exact sequence
        \begin{equation}\label{eq: [2l;00l]+[KX] SES for l=1}
            0 \to \mathcal O_X(K_\tau) \to \mathcal O_X(A_0+B_3+K_\tau) \to \mathcal O_{A_0+B_3}(A_0+B_3+K_\tau) \to 0.
        \end{equation}
        This computation requires a case study, as the values $h^0(K_\tau)$, $h^0(\mathcal O_{A_0+B_3}(K_\tau))$, and $h^0(\mathcal O_{A_0+B_3}(A_0+B_3+K_\tau))$ vary over $\tau$.
    \item[Case 3-1] If $\tau = (10\ 00\ 00)$.\par
    In this case we have $h^0(\mathcal O_{A_0+B_3}(D)) = 1$ for every $\ell$, hence
    \[
        h^0(D) = h^0(D-(A_0+B_3)) + 1 = \dots = h^0(A_0+B_3+K_\tau) + (\ell-1).
    \]
    Moreover, $D_1 := 3(A_0+B_3) + C_0 + C_3 \in \lvert A_0+B_3+K_\tau \rvert$. Since $D_1$ does not contain $B_0$, we have $h^0(D_1) = h^0(D_1 -B_0)+1$. It can be easily checked that $h^0(D_1-B_0) = h^0(D_1-B_0-A_3) = h^0( K_X + \tau') = 1$ where $\tau' = (10\ 10\ 10)$. This shows that $h^0(A_0+B_3+K_\tau) = 2$. Consequently, $h^0(D) = \ell+1$.

    \item[Case 3-2] If $\tau = (00\ 10\ 00)$.\par
    In this case we have $h^0(\mathcal O_{A_0+B_3}(K_\tau)) = 2$ and $h^0(\mathcal O_{A_0+B_3}(A_0+B_3+K_\tau)) = 1$, thus
    \[
        h^0(D) = \left \{
            \begin{array}{ll}
                 h^0(D-A_0-B_3) + 2 & \text{if $\ell\geq 2$ is even} \\
                 h^0(D-A_0-B_3) + 1 & \text{if $\ell\geq 2$ is odd}.
            \end{array}
        \right.
    \]
    Combining, we get 
    \begin{equation}\label{eq: [2l;00l]+[KX] tau=(001000) case}
        h^0(D) = h^0(A_0+B_3 + K_\tau) + \left\lfloor \frac{3\ell-2}{2} \right\rfloor.
    \end{equation}
    Moreover, $D_2 := 2(A_3+B_0) + A_3 + B_3 + 2C_0 \in \lvert A_0+B_3 +K_\tau\rvert$ does not contain $A_0$, hence the map $H^0(\mathcal O_X(A_0+B_3+K_\tau)) \to H^0(\mathcal O_{A_0+B_3}(A_0+B_3+K_X))$ is surjective. This shows $h^0(A_0+B_3+K_\tau) = h^0(K_\tau) + h^0(\mathcal O_{A_0+B_3}(A_0+B_3+K_\tau)) = 3$, and thus, $h^0(D) = \lfloor \frac{3\ell+4}{2} \rfloor$.

    \item[Case 3-3] If $\tau = (00\ 00\ 10)$.\par
    In this case we have $h^0(\mathcal O_{A_0+B_3}(K_\tau)) = 1$ and $h^0(\mathcal O_{A_0+B_3}(A_0+B_3+K_\tau)) = 2$, thus
    \begin{equation}\label{eq: [2l;00l]+[KX] tau=(000010) case}
        h^0(D) = h^0(A_0+B_3+K_\tau) + \left\lfloor\frac{3\ell-3}{2} \right\rfloor.
    \end{equation}
    In the short exact sequence \eqref{eq: [2l;00l]+[KX] SES for l=1}, we have $h^1( \mathcal O_X(A_0+B_3+K_\tau)) \geq h^1(\mathcal O_{A_0+B_3}(A_0+B_3+K_\tau)) = 1$, thus $h^0(\mathcal O_X(A_0+B_3+K_\tau)) \geq 3$. We claim that the equality holds. Let $D_3 := A_0+B_3+K_\tau$. The divisor $A_0+B_3+C_0$ is nef and big by Proposition~\ref{prop: Kleiman criterion} and $(A_0+B_3+C_0)^2 > 0$. Thus, Kawamata-Viewheg vanishing theorem reads $h^1(D_3+C_0) = 0$. In particular, $h^0(D_3+C_0) = \chi(D_3+C_0) = 3$, showing that $h^0(D_3) \leq 3$. Consequently, we have $h^0(D) = \lfloor \frac{3\ell+3}{2} \rfloor$.
    
    \item[Case 3-4] If $\tau \neq (00\ 00\ 10)$ and $\tau = (00\ \epsilon0\ {*}\epsilon)$.\par
        In this case \eqref{eq: [2l;00l]+[KX] tau=(000010) case} is derived in the same way. Since $h^1(K_\tau)=0$, we have
        \[
            h^0(\mathcal O_X(A_0+B_3+K_\tau)) = h^0( \mathcal O_{A_0+B_3}(A_0+B_3+K_\tau)) + h^0(K_\tau) = 2 + h^0(K_\tau).
        \]
        Thus, $h^0(D) = \lfloor \frac{3\ell+3}{2} \rfloor$ if $\tau \neq (00\ 00\ 00)$, and $h^0(D) = \lfloor \frac{3\ell+1}{2}\rfloor$ if $\tau = (00\ 00\ 00)$.

    \item[Case 3-5] If $\tau \neq (00\ 10\ 00)$ and $\tau = (00\ \epsilon0\ {*}\bar \epsilon)$, where $\bar \epsilon = 1-\epsilon$.\par
        The equality \eqref{eq: [2l;00l]+[KX] tau=(001000) case} derived in the same way. Moreover, we have 
        \[
            h^0(A_0+B_3+K_\tau) = h^0(\mathcal O_{A_0+B_3}(A_0+B_3+K_\tau)) + h^0(K_\tau) = 2,
        \]
        thus $h^0(D) = \lfloor \frac{3\ell+2}{2}\rfloor$
    \item[Case 3-6] If $\tau$ is none of the above.\par
        In this case we have $h^0(K_\tau)=1$ and $h^0(\mathcal O_{A_0+B_3}(A_0+B_3+K_\tau))=1$, hence $h^0(A_0+B_3+K_\tau)=2$. In particular, $h^0(D) = h^0(A_0+B_3+K_\tau) + (\ell-1) = \ell+1$. \qedhere
\end{description}

\end{proof}

%:
\bigskip
\section{Application to the study of Ulrich bundles on $X$}

Now, we use the results of previous sections to study Ulrich bundles on $X$.

\begin{proposition}\label{prop: numerical trivial class has e>1}
    Let $[D] = [d;a,b,c]$ be a numerical class with $d \geq 1$ and $\chi(D) = 0$. Then, $\e([D]) \geq 1$.
\end{proposition}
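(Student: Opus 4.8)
The plan is to convert the hypothesis $\chi(D)=0$ into an explicit arithmetic condition on $(d;a,b,c)$ and then to verify the effectiveness criterion of Proposition~\ref{prop: 1,64-eff criterion}: it suffices to show $M\le\ell\le d$, where $M=\max\{0,a,b,c\}$ and $\ell=\tfrac13(d+a+b+c)$. First I would record a self-intersection formula. Since $\{A_0,B_0,C_0,K_X\}$ is a $\QQ$-basis of $\Pic X$ whose Gram matrix is read off from Table~\ref{table: symCoords} ($A_0^2=B_0^2=C_0^2=-1$, $A_0.B_0=B_0.C_0=C_0.A_0=0$, $\sZ_0.K_X=1$, $K_X^2=6$), and $(D.A_0,D.B_0,D.C_0,D.K_X)=(a,b,c,d)$, a direct computation gives $D^2=\ell^2-(a^2+b^2+c^2)$. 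Feeding this into Riemann--Roch, $\chi(D)=1+\tfrac12(D^2-d)$, the equation $\chi(D)=0$ together with the tautology $d=3\ell-(a+b+c)$ rearranges into the key identity
\[
    a(a-1)+b(b-1)+c(c-1)=(\ell-1)(\ell-2),
\]
whose two sides are automatically nonnegative.

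Next I would derive $0\le\ell\le d$. Cauchy--Schwarz gives $a^2+b^2+c^2\ge\tfrac13(a+b+c)^2$; substituting $a^2+b^2+c^2=(a+b+c)+(\ell-1)(\ell-2)$ from the identity, then $a+b+c=3\ell-d$, this collapses to the inequality $6\ell^2-6\ell d+d^2+3d-6\le0$. For each fixed $d\ge1$ the integer solutions $\ell$ of this quadratic inequality lie in $\{0,1,\dots,d\}$, as one sees by an elementary estimate of its two roots $\tfrac16\bigl(3d\pm\sqrt3\sqrt{(d-3)^2+3}\bigr)$ (the smaller exceeds $-1$, and for $d\ge2$ the larger is $\le d$, while $d=1$ forces $\ell\le1$ directly). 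Hence $0\le\ell\le d$, and it remains to bound $a,b,c$ from above by $\ell$.

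If $\ell\ge1$, then in the key identity each summand on the left is at most $(\ell-1)(\ell-2)$ (the other two being nonnegative), whereas $a\ge\ell+1$ would force $a(a-1)\ge(\ell+1)\ell=(\ell-1)(\ell-2)+(4\ell-2)>(\ell-1)(\ell-2)$; so $a\le\ell$, and likewise $b,c\le\ell$, giving $M\le\ell$. If $\ell=0$, the identity reads $a(a-1)+b(b-1)+c(c-1)=2$; since $n(n-1)\in\{0,2,6,\dots\}$ for $n\in\ZZ$, exactly one summand equals $2$ and the others vanish, so up to order $(a,b,c)$ has one coordinate in $\{-1,2\}$ and the other two in $\{0,1\}$; the constraint $d=-(a+b+c)\ge1$ then leaves only the permutations of $(-1,0,0)$, for which $M=0=\ell\le1=d$. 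In all cases $M\le\ell\le d$, so Proposition~\ref{prop: 1,64-eff criterion} yields $\e([D])\ge1$. The part requiring care is pinning down the self-intersection formula, and hence the identity, exactly; after that the argument is a short case analysis, the only genuinely delicate point being the boundary value $\ell=0$ (the classes numerically equivalent to $A_0$, twisted by torsion), where $d\ge1$ must be invoked to discard the spurious solutions with a coordinate equal to $2$.
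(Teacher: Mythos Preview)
Your proof is correct and follows essentially the same route as the paper's: derive the Riemann--Roch identity, apply Cauchy--Schwarz to bound $\ell$ between $0$ and $d$, then use the identity again to force $M\le\ell$, with a small case analysis at $\ell=0$. Your factored form $a(a-1)+b(b-1)+c(c-1)=(\ell-1)(\ell-2)$ and the immediate bound $\ell\ge0$ streamline the argument slightly compared to the paper, which works with the equivalent identity $(2\ell-3)^2+2=\sum(2z-1)^2$ and first obtains $\ell\ge-2$ before eliminating $\ell=-2,-1$ by hand.
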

\begin{proof}
    Recall that $\ell := \frac 13 (d+a+b+c)$ is an integer. By \eqref{eq: 1-eff}, it suffices to prove that $M \leq \ell \leq d$, where $M = \max\{0,a,b,c\}$. By Riemann-Roch theorem, we have
    \begin{align*}
        2\chi(D) &= D^2 - (D.K_X) + 2  \\
        &= (\ell^2 - a^2 - b^2 - c^2 ) - (3\ell - a -b - c) + 2,
    \end{align*}
    which leads to the identity
    \begin{equation}\label{eq: Riemann-Roch for numerically trivial D}
        (2\ell-3)^2 + 2 = (2a-1)^2 + (2b-1)^2 + (2c-1)^2.
        %\Bigl( \ell - \frac 32 \Bigr )^2 - \Bigl( a - \frac 12\Bigr)^2 - \Bigl( b- \frac 12 \Bigr)^2 - \Bigl( c- \frac 12 \Bigr)^2 + \frac 12 = 0.
    \end{equation}
    Here, we invoke the Cauchy-Schwarz inequality:
    \[
        \left(a+b+c - \frac 32 \right)^2 \leq 3 \left(  \Bigl( a - \frac 12 \Bigr) ^2 + \Bigl( b-\frac 12\Bigr )^2 + \Bigl( c - \frac 12 \Bigr)^2 \right),
    \]
    which equivalently reads as
    \begin{equation}\label{eq: Cauchy-Schwarz for numerically trivial D}
        \Bigl(3\ell - d - \frac 32\Bigr )^2 \leq 3 \left(  \Bigl( \ell - \frac 32 \Bigr)^2 + \frac 12  \right).
    \end{equation}
    Solving the quadratic inequality \eqref{eq: Cauchy-Schwarz for numerically trivial D} in terms of $\ell$ yields
    \[
        \frac{1}{6} \left (  3d-9 - \sqrt{3} \sqrt{ (d-6)^2 + 3 } \right)  \leq \ell \leq \frac{1}{6} \left (  3d-9 + \sqrt{3} \sqrt{ (d-6)^2 + 3 } \right).
    \]
    One easily finds that for $d \geq 1$,
    \[
        \frac{1}{6} \left (  3d-9 - \sqrt{3} \sqrt{ (d-6)^2 + 3 } \right) \leq \frac 12 \left( 3d-9 + \sqrt 3 ( \lvert d-6 \rvert + 2 )  \right) < d,
    \]
    hence the inequality $\ell \leq d$ holds. The lower bound of $\ell$ can be negative for small values of $d$. For instance, if $d = 1$, then the left hand side of the previous inequality is $\frac 13 (-3 - \sqrt{21}) \approx -2.53$. Since the lower bound increases with $d$, we see that $\ell \geq -2$. Using the identity \eqref{eq: Riemann-Roch for numerically trivial D}, we will enumerate all the possible values of $a,b,c$ and use the inequality 
    \[
        5 \leq 2d+3 = 6\ell - (2a-1) - (2b-1) - (2c-1)
    \]
    to eliminate the cases $\ell \leq -1$. Suppose $\ell = -2$. Then $(2\ell-3)^2+2 = 51$. There are only few ways to express this number as the sum of three squares of odd numbers:
    \[
        51 = 7^2 + 1^2 + 1^2 = 5^2 + 5^2 + 1
    \]
    Since both $6\ell + 7 + 1 + 1 =-3$ and $6\ell + 5 + 5 + 1 = -1$ are smaller than $5$, we cannot have $d \geq 1$ when $\ell = -2$. A similar argument works for $\ell=-1$. Indeed, we have
    \[
        27 = 5^2 + 1^2 + 1^2 = 3^2 + 3^2 + 3^2,
    \]
    and $6\ell - (2a-1) - (2b-1) - (2c-1)$ is smaller than $5$. This proves that $\ell \geq 0$.
    
    For $\ell=0$, we have
    \[
        11 = 3^2 + 1^2 + 1^2,
    \]
    and $6\ell + 3 + 1 + 1 = 5$. Hence, up to symmetry, we have $[D] = [d;a,b,c] = [1;-1,0,0]$. This class contains $A_0$, hence $e([D]) \geq 1$. Now, suppose $\ell \geq 1$. From \eqref{eq: Riemann-Roch for numerically trivial D}, we have
    \[
        0 \leq \Bigl( \ell - \frac 32 \Bigr )^2 - \Bigl( a - \frac 12\Bigr)^2 - \frac 14 - \frac 14 + \frac 12,
    \]
    from which we read $(a-\frac 12)^2 \leq (\ell-\frac 32)^2$. If $\ell \geq 2$, then $\ell \geq a+1$. Similarly, $\ell \geq b+1$ and $\ell \geq c+1$ hold, thus $\ell \geq M$. This proves the inequality \eqref{eq: 1-eff} for $\ell \geq 2$. If $\ell = 1$, then $(a- \frac12)^2 \leq \frac 14$, hence either $a = 0$ or $a=1$. This proves $\ell \geq M$ for $\ell=1$.
\end{proof}

\begin{proposition}\label{prop: nefness of H-trivial D with d>=6}
    Assume that the divisor $D \in \Pic X$ is cohomologically trivial; that is, it satisfies $h^p(D) = 0$ for all $p$. If $d = (D.K_X) \geq 6$, then $D$ is nef.
\end{proposition}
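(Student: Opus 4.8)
The plan is to argue by contradiction: assuming $D$ is not nef, I will use the trimming lemma to convert the vanishing $h^2(D)=0$ into the effectivity of a concrete numerical class close to $K_X-D$, and then combine the effectiveness criterion with the Diophantine relation forced by $\chi(D)=0$ to force $d$ to be small. To set up, note that cohomological triviality gives $\chi(D)=0$, so Riemann--Roch yields $D^2=d-2$, and Proposition~\ref{prop: numerical trivial class has e>1} together with Proposition~\ref{prop: 1,64-eff criterion} shows $[D]=[d;a,b,c]$ is effective with $\ell:=\tfrac13(d+a+b+c)\ge 0$. Now suppose $(D.C)<0$ for some irreducible curve $C$. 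Writing $D\equiv\sum z_i\sZ_i$ with $z_i\ge 0$ (Theorem~\ref{thm: PicX and EffX}(2)), some $\sZ_j$ with $z_j>0$ has $(\sZ_j.C)<0$; since distinct irreducible curves meet nonnegatively, $C=\sZ_j$ with $\sZ_j^2<0$, i.e.\ $j\in\{0,3\}$ (recall $\sZ_i^2=0$ for $i\in\{1,2\}$). Using the symmetries of Remark~\ref{rmk: symmetry}, which are induced by automorphisms of $X$ (hence preserve every $h^p$) and act transitively on $\{\sZ_i:\sZ\in\{A,B,C\},\,i\in\{0,3\}\}$, I may assume $(D.A_0)=a=-e$ with $e\ge 1$.

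Next I produce the effective class. Since $(D-kA_0).A_0=-e+k<0$ for $0\le k<e$, applying Proposition~\ref{prop: trimming lemma}(1) repeatedly gives $h^0(D-eA_0)=h^0(D)=0$. A Riemann--Roch computation (using $A_0^2=-1$ and $A_0.K_X=1$) gives $\chi(D-eA_0)=\binom{e+1}{2}>0$, hence $h^2(D-eA_0)>0$, so $K_X-D+eA_0$ is effective. In numerical coordinates $[D-eA_0]=[d-e;\,0,b,c]$, whence $[K_X-D+eA_0]=[6-d+e;\,1,\,1-b,\,1-c]$. This class is effective and not equal to $[0;0,0,0]$ (its $A_0$-coordinate is $1$), so $(K_X-D+eA_0).K_X\ge 1$ and Proposition~\ref{prop: 1,64-eff criterion} applies to it: its $M$-value is $\max\{0,1,1-b,1-c\}\ge 1$ while its $\ell$-value equals $3-\ell$, so $\e\ge 1$ forces $1\le 3-\ell$, i.e.\ $\ell\le 2$. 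Thus $\ell\in\{0,1,2\}$.

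Finally I feed this into $\chi(D)=0$, which gives the identity \eqref{eq: Riemann-Roch for numerically trivial D}, namely $(2\ell-3)^2+2=(2e+1)^2+(2b-1)^2+(2c-1)^2$ (using $a=-e$). Since $e\ge 1$ and each $(2x-1)^2\ge 1$, the right-hand side is at least $11$, so $(2\ell-3)^2\ge 9$; combined with $\ell\in\{0,1,2\}$ this forces $\ell=0$, and equality then pins down $e=1$ and $b,c\in\{0,1\}$. But $\ell=0$ says $d=-(a+b+c)=1-b-c\le 1$, contradicting $d\ge 6$. Hence $D$ is nef.

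The only genuinely delicate points are the reduction step — checking that the cyclic-letter and index-flip symmetries of Remark~\ref{rmk: symmetry} are realized by automorphisms of $X$, so that they preserve cohomology, and that together they act transitively on the six curves $\sZ_i$ with $i\in\{0,3\}$ — and the bookkeeping of numerical coordinates under $D\mapsto D-eA_0$ and under $E\mapsto K_X-E$. Once these are in place there is no hard estimate: the one idea needed is to examine $D-eA_0$ rather than $D$ itself, which turns $h^2(D)=0$ into the effectivity of a class that the effectiveness criterion can constrain.
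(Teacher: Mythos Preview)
Your proof is correct but takes a genuinely different route from the paper's. The paper subtracts a \emph{single} copy of $A_0$: from the short exact sequence on $A_0$ and $h^2(D)=0$ it gets $h^0(K_X+A_0-D)>0$, so $K_X+A_0-D$ is effective of canonical degree $7-d\in\{0,1\}$; the two cases $d=7$ (forcing $D=A_0+K_X$, contradicting $h^0(D)=0$) and $d=6$ (forcing $D=A_0+K_X-\sZ_i$ for some $\sZ_i$ with $i\in\{0,3\}$, then checking $h^1\neq 0$) are dispatched by hand. You instead subtract $e=-(D.A_0)$ copies, compute $\chi(D-eA_0)=\binom{e+1}{2}>0$ to force $K_X-D+eA_0$ effective, and then feed this into the effectiveness criterion (Proposition~\ref{prop: 1,64-eff criterion}) to bound $\ell\le 2$, after which the Diophantine identity~\eqref{eq: Riemann-Roch for numerically trivial D} pins down $\ell=0$ and hence $d\le 1$. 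Your argument is cleaner in that it avoids any case analysis and never needs to name specific divisors, but it leans on the numerical machinery of Propositions~\ref{prop: 1,64-eff criterion} and~\ref{prop: numerical trivial class has e>1}; the paper's argument is more elementary and self-contained, trading the Diophantine step for two explicit checks. Your caveat about the symmetries of Remark~\ref{rmk: symmetry} preserving cohomology is well taken---the paper invokes exactly the same reduction (``without loss of generality, $(D.A_0)<0$'') without further comment.
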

\begin{proof}
    Assume $D$ is not nef. By Proposition~\ref{prop: Kleiman criterion}, there exists a curve $\sZ_i$ with $i\in\{0,3\}$ such that $(D.\sZ_i) < 0$. Without loss of generality, we may assume $(D.A_0) < 0$. In the short exact sequence
    \[
        0 \to \mathcal O_X(D-A_0) \to \mathcal O_X(D) \to \mathcal O_{A_0}(D) \to 0,
    \]
    we have $h^1(\mathcal O_{A_0}(D)) > 0$ since $A_0$ is an elliptic curve and $(D.A_0) < 0$. Then,
    \[
        h^1(\mathcal O_{A_0}(D)) = h^2(D - A_0) = h^0(A_0+K_X - D) > 0.
    \]
    Since $({A_0 + K_X - D} \mathbin. K_X) \geq 0$, we have $d \leq 7$. If $d=7$, then $D = A_0+K_X$. This contradicts $h^0(D) = 0$, hence $D$ is nef. If $d = 6$, $A_0+K_X-D = \sZ_i$ for suitable $i \in \{0,3\}$ and $\sZ \in\{ A,B, C\}$. Then, $D = A_0 + K_X - \sZ_i$ is not nef, hence $\sZ_i$ is either $B_3$ or $C_3$. For both cases, one can directly check that $h^1(D) = 1$, thus $D$ is not cohomologically trivial.
\end{proof}
\begin{corollary}\label{cor: H-trivial divisors with d>=7}
    If $D \in \Pic X$ is cohomologically trivial and $d = (D.K_X) \geq 7$, then up to symmetry, $D$ belongs to one of the following numerical classes:
    \begin{enumerate}
        \item $[2\ell+1;\, 0,0,\ell-1]$;
        \item $[2\ell;\, 0, 1, \ell-1]$;
        \item $[2\ell;\, 0, \ell-1, 1]$.
    \end{enumerate}
\end{corollary}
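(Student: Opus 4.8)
The plan is to feed the hypotheses into the classification of Sections~3 and~4 and then trim the resulting list with Riemann--Roch.

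Since $D$ is cohomologically trivial and $d \geq 7 > 6$, Proposition~\ref{prop: nefness of H-trivial D with d>=6} shows $D$ is nef; and since $\chi(D) = h^0(D) - h^1(D) + h^2(D) = 0$, Proposition~\ref{prop: numerical trivial class has e>1} gives $\e([D]) \geq 1$. So $D$ is a nef divisor with $\e([D]) \geq 1$ and $d \geq 7$, and Proposition~\ref{prop: exotic classes (nef but not 64-eff)} applies: either $\e([D]) = 64$, or — after replacing $D$ by a symmetric copy (Remark~\ref{rmk: symmetry}) — the class $[D]$ is one of $[2\ell;\,\ell,0,0]$, $[2\ell+1;\,\ell-1,0,0]$, $[2\ell;\,\ell-1,1,0]$, $[2\ell;\,\ell-1,0,1]$.

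First I would dispose of $\e([D]) = 64$: the numerical class $[D]$ consists of the $64$ divisors $D + \tau$, $\tau \in \Tors\Pic X$, and $D$ itself is one of them, so $\e([D]) = 64$ would force $h^0(D) \geq 1$, contradicting cohomological triviality. Hence $[D]$ is one of the four exotic classes, and $D$ is still cohomologically trivial, so $\chi(D) = 0$. Substituting each class into the Riemann--Roch identity \eqref{eq: Riemann-Roch for numerically trivial D}: for $[2\ell;\,\ell,0,0]$ (whose $\tfrac13(d+a+b+c)$ equals $\ell$) the identity reads $(2\ell-3)^2 = (2\ell-1)^2$, forcing $\ell = 1$ and $d = 2$, which is excluded by $d \geq 7$; for the other three classes the identity holds identically, so no further class is eliminated. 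Finally, a cyclic permutation of the letters $A,B,C$ (a permitted symmetry) carries $[2\ell+1;\,\ell-1,0,0]$, $[2\ell;\,\ell-1,0,1]$, $[2\ell;\,\ell-1,1,0]$ to $[2\ell+1;\,0,0,\ell-1]$, $[2\ell;\,0,1,\ell-1]$, $[2\ell;\,0,\ell-1,1]$ respectively, which is the list in the statement.

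I do not expect a genuine obstacle: the proof is bookkeeping on top of results already in hand. The two points to watch are that $\e([D]) = 64$ really contradicts cohomological triviality (because $D$ lies in its own numerical class), and that both $[2\ell;\,0,1,\ell-1]$ and $[2\ell;\,0,\ell-1,1]$ must be retained, since swapping the last two coordinates is not among the allowed symmetries.
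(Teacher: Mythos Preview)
Your proof is correct and follows essentially the same approach as the paper: both invoke Propositions~\ref{prop: numerical trivial class has e>1} and~\ref{prop: nefness of H-trivial D with d>=6} to obtain $1 \leq \e([D]) \leq 63$, then apply Proposition~\ref{prop: exotic classes (nef but not 64-eff)} and eliminate the class $[2\ell;\,\ell,0,0]$ via its Euler characteristic. The only cosmetic difference is that the paper records $\chi([2\ell;\,0,0,\ell]) = 1-\ell$ directly, whereas you route the same computation through the identity~\eqref{eq: Riemann-Roch for numerically trivial D}; your closing remark about why both $[2\ell;\,0,1,\ell-1]$ and $[2\ell;\,0,\ell-1,1]$ survive is a useful clarification that the paper leaves implicit.
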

\begin{proof}
    Since $D \in [D]$ is not effective, $\e([D]) \neq 64$. By Proposition~\ref{prop: numerical trivial class has e>1} and Proposition~\ref{prop: nefness of H-trivial D with d>=6}, $D$ is nef and $\e([D]) \geq 1$. Hence, the result essentially follows from Proposition~\ref{prop: exotic classes (nef but not 64-eff)}, while the case $[2\ell;\, 0,0,\ell]$ is excluded as its holomorphic Euler characteristic is $1 - \ell \neq 0$.
\end{proof}
%:
\begin{proposition}\label{prop: ample and bpf divisor}
    Let $H \in \Pic X$ be an ample and base point free divisor. If $H \in [ h;\, a(H), b(H), c(H)]$, then $h \geq 12$ and $a(H), b(H), c(H) \geq 2$.
\end{proposition}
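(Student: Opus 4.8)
The plan is to reduce the statement to lower bounds on the twelve intersection numbers $(H.\sZ_i)$, and to extract those bounds from base point freeness by using that a smooth curve of positive genus carries no globally generated line bundle of degree $1$.

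First I would recall the dictionary between numerical coordinates and intersection numbers already recorded in the proof of Proposition~\ref{prop: exotic classes (nef but not 64-eff)}: for any numerical class $D = [d;a,b,c]$, setting $\ell := \frac13(d+a+b+c)$, one has
\[
 (D.A_0)=a,\ (D.B_0)=b,\ (D.C_0)=c,\qquad (D.A_3)=\ell-b-c,\ (D.B_3)=\ell-c-a,\ (D.C_3)=\ell-a-b,
\]
while $d = (D.K_X) = 3\ell-(a+b+c)$ by the very definition of $\ell$. Applying this to $H = [h;a(H),b(H),c(H)]$ and summing the three ``index $3$'' lines gives the identity
\[
 h = \bigl((H.A_3)+(H.B_3)+(H.C_3)\bigr) + \bigl(a(H)+b(H)+c(H)\bigr).
\]
So it is enough to prove that $(H.\sZ_i)\geq 2$ for every $\sZ\in\{A,B,C\}$ and every $i\in\{0,3\}$: the three curves $\sZ_0$ give $a(H),b(H),c(H)\geq 2$ (the second half of the statement), and together with the three curves $\sZ_3$ the displayed identity forces $h \geq 6 + 6 = 12$.

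The heart of the argument is the following observation. Let $E\subset X$ be an irreducible smooth curve with $g(E)\geq 1$; by Section~2 each of the six curves $A_0,B_0,C_0,A_3,B_3,C_3$ is elliptic, so this applies to all of them. I claim $(H.E)\geq 2$. Indeed, $H$ base point free means $\mathcal O_X(H)$ is globally generated, hence so is its restriction $\mathcal O_E(H)=\mathcal O_X(H)\big\vert_E$ (a global section of $\mathcal O_X(H)$ not vanishing at a given point of $E$ restricts to one of $\mathcal O_E(H)$ not vanishing there). A globally generated line bundle on a smooth projective curve has nonnegative degree, and a line bundle of degree exactly $1$ on a curve of genus $\geq 1$ is never globally generated — it has at most one section and that section must vanish somewhere. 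Hence $(H.E)=\deg\mathcal O_E(H)\neq 1$; since $H$ is ample, $(H.E)>0$, and therefore $(H.E)\geq 2$.

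Combining the two paragraphs completes the proof. I expect no serious obstacle here: the only points that need a little care are the elementary fact that a positive-genus curve admits no globally generated degree-$1$ line bundle — this is precisely the place where base point freeness, as opposed to mere ampleness, enters — and the bookkeeping of the coordinate/intersection dictionary; everything else is immediate from Sections~2 and~4.
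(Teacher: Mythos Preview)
Your argument is correct. The paper states this proposition without proof, so there is nothing to compare your approach against directly; but your line of reasoning---restricting the base point free system to each of the six elliptic curves $\sZ_i$\,($i=0,3$) and using that an elliptic curve admits no globally generated line bundle of degree $1$---is the natural one and is almost certainly what the author intended, given that the genus of these curves is computed in Section~2 precisely for later use. Your bookkeeping with the identity $h = \sum_{\sZ}(H.\sZ_3) + \sum_{\sZ}(H.\sZ_0)$ is clean and checks out against the dictionary $(D.A_0)=a$, $(D.A_3)=\ell-b-c$, etc.\ recorded in the proof of Proposition~\ref{prop: exotic classes (nef but not 64-eff)}.
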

\begin{proof}
    By Kleiman's criterion, $(H.C) > 0 $ for any curve $C \subset X$. Thus, $a(H) = (H . A_0) > 0$. Assume $a(H) = 1$. Since $H$ is base point free, there exists $D \in \lvert H \rvert$ whose support does not contain $A_0$. Then, $D$ intersects $A_0$ precisely at one point, say $P$. If the $\Pic A_0$-part of the symmetric coordinates of $D$ is $(1\ \epsilon_1\epsilon_2)$, then $P$ and $A_0^{\epsilon_1\epsilon_2}$\,(see Section~\ref{sec: Burniat config} for the notation) are linearly equivalent. Since the curve $A_0$ is not rational, we have $P = A_0^{\epsilon_1 \epsilon_2}$. So far we have shown the following: every member of $\lvert H \rvert $ contains either $A_0$ or $A_0^{\epsilon_1 \epsilon_2}$ in its support. This proves that $A_0^{\epsilon_1 \epsilon_2}$ is a base point of $\lvert H\rvert$, a contradiction. It follows that $a(H) \geq 2$. Similarly, $b(H),\,c(H) \geq 2$.
    
    Moreover, we also have
    \[
        (H . A_3),\, (H.B_3) , \, (H. C_3) \geq 2.
    \]
    To describe the above numbers in terms of $h, a(H), b(H), c(H)$, we use the following divisor in $[H]$:
    \[
        \ell(A_3 + B_0 + C_0) - a(H) A_0 - b(H) B_0 - c(H) C_0, \text{ where }\ell = \frac 13 (h + a(H) + b(H) + c(H)).
    \]
    Computing intersection numbers, we obtain $(H.A_3) = \ell - b(H) - c(H)$, $(H.B_3) = \ell - a(H) - c(H)$, and $(H.C_3) = \ell - a(H)-b(H)$. Then,
    \begin{align*}
        6 &\leq (H.A_3) + (H.B_3) + (H.C_3) \\
        &= 3\ell - 2(a(H) + b(H) + c(H)) \\
        &= h - a(H) + b(H) + c(H).
    \end{align*}
    Thus, $h \geq 6 + a(H) + b(H) + c(H) \geq 12$.
\end{proof}

\begin{theorem}\label{thm: no Ulrich line bundles}
    Let $H \in \Pic X$ be an ample and base point free divisor. Then, there does not exist a divisor $D \in \Pic X$ such that both $D$ and $D-H$ are cohomologically trivial.
\end{theorem}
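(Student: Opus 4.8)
The plan is to argue by contradiction. Suppose $D \in \Pic X$ is such that both $D$ and $D-H$ are cohomologically trivial. By Serre duality $K_X - D$ and $K_X - D + H$ are cohomologically trivial as well, and subtracting the Riemann--Roch expressions for $\chi(D)$ and $\chi(D-H)$, which both vanish, gives the numerical identity $(D.H) = \tfrac12\bigl(H^2 + (H.K_X)\bigr)$. Since $(D.K_X) + (K_X - D + H).K_X = K_X^2 + (H.K_X) = 6 + (H.K_X) \ge 18$ by Proposition~\ref{prop: ample and bpf divisor}, and since replacing $D$ by $K_X - D + H$ turns the pair $(D, D-H)$ into $(K_X - D + H, K_X - D)$ — preserving every hypothesis as well as the displayed identity — I may assume $d := (D.K_X) \ge 9$.

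Next I would invoke the structural results. As $D$ is cohomologically trivial with $d \ge 9 > 6$, Proposition~\ref{prop: nefness of H-trivial D with d>=6} makes it nef and Corollary~\ref{cor: H-trivial divisors with d>=7} puts $[D]$, after applying a symmetry of Remark~\ref{rmk: symmetry} to the whole configuration, into one of the forms $[2\ell+1;0,0,\ell-1]$, $[2\ell;0,1,\ell-1]$, $[2\ell;0,\ell-1,1]$; the symmetry carries $H$ to another ample and base point free divisor, so by Proposition~\ref{prop: ample and bpf divisor} its $\Pic$-coordinates $a_H, b_H, c_H$ are all $\ge 2$. In each of the three forms exactly one $\Pic$-coordinate of $[D]$ (namely $\ell-1 \ge 3$) is $\ge 2$ and the other two are $\le 1$, so the identity $[K_X - D + H] = [K_X] + [H] - [D]$ forces at least two $\Pic$-coordinates of $[K_X - D + H]$ to be $\ge 2$. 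But a direct inspection shows that no class in the orbit of the three families under the symmetries of Remark~\ref{rmk: symmetry} has more than one $\Pic$-coordinate $\ge 2$ (cyclic permutation only reorders coordinates, while the flip $i \leftrightarrow 3-i$ fixes the last two families and sends $[2\ell+1;0,0,\ell-1]$ to $[2\ell+1;1,1,\ell]$). Hence $[K_X - D + H]$ is not among those classes; since $K_X - D + H$ is cohomologically trivial, Corollary~\ref{cor: H-trivial divisors with d>=7} forces $(K_X - D + H).K_X \le 6$, that is, $(D-H).K_X = d - (H.K_X) \ge 0$.

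It then remains to contradict this. I would substitute the explicit form of $[D]$ into $(D.H) = \tfrac12\bigl(H^2 + (H.K_X)\bigr)$, using the bilinear intersection pairing $D'.D'' = \ell_{D'}\ell_{D''} - a_{D'}a_{D''} - b_{D'}b_{D''} - c_{D'}c_{D''}$ — the polarization of the identity $D'^2 = \ell_{D'}^2 - a_{D'}^2 - b_{D'}^2 - c_{D'}^2$ used in the proof of Proposition~\ref{prop: numerical trivial class has e>1}, where $\ell_{D'} = \tfrac13\bigl((D'.K_X) + a_{D'} + b_{D'} + c_{D'}\bigr)$. This expresses $d$ as an explicit function of $a_H, b_H, c_H$ and $\ell_H$. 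Feeding in the ampleness inequalities $(H.A_3) = \ell_H - b_H - c_H \ge 1$, $(H.B_3) = \ell_H - c_H - a_H \ge 1$, $(H.C_3) = \ell_H - a_H - b_H \ge 1$ (which give $\ell_H - c_H > \max\{a_H,b_H\}$ in the first two cases and $\ell_H - b_H > \max\{a_H,c_H\}$ in the third), together with $a_H, b_H, c_H \ge 2$ and the elementary estimate $x^2/q \ge 2x - q$ applied to the coordinates of $H$, one obtains $(D-H).K_X = d - (H.K_X) < 0$ in each of the three cases — contradicting the inequality $(D-H).K_X \ge 0$ from the previous paragraph.

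The main obstacle is the computation in the last paragraph: carrying out the three parallel evaluations of $(D-H).K_X$ from the numerical identity and checking that the resulting quantity is always strictly negative, with only $a_H, b_H, c_H \ge 2$ and the ampleness of $H$ at one's disposal. The combinatorial claim underpinning the middle step — that $[K_X - D + H]$ necessarily has two $\Pic$-coordinates $\ge 2$ whereas no class in the symmetry orbit of the three families does — must also be verified with care, in particular the effect of the flip symmetry.
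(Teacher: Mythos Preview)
Your plan is correct and genuinely different from the paper's argument, though both rely on Corollary~\ref{cor: H-trivial divisors with d>=7} applied to $D$ and to $K_X-D+H$. The paper proceeds by first observing that $D-H$ cannot be nef (since $[D]$ has a zero coordinate while $a_H\ge 2$), hence $d(D-H)\le 5$ by Proposition~\ref{prop: nefness of H-trivial D with d>=6}; it then splits into the subcases $d(D-H)\le -1$ (handled by applying Corollary~\ref{cor: H-trivial divisors with d>=7} to $K_X-D+H$ and comparing coordinates) and $0\le d(D-H)\le 5$ (handled by the identity~\eqref{eq: Riemann-Roch for numerically trivial D} combined with a short sum-of-three-odd-squares analysis to force $\ell'=d'=5$ and then reach a contradiction). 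You instead extract the single numerical identity $(D.H)=\tfrac12(H^2+H.K_X)$ from $\chi(D)=\chi(D-H)=0$ and pit two bounds against each other: your combinatorial observation that $[K_X-D+H]$ has at least two $\Pic$-coordinates $\ge 2$ (whereas every class in the symmetry orbit of the three families has at most one) forces $(D-H).K_X\ge 0$, while the identity plus the explicit form of $[D]$ yields $(D-H).K_X<0$.

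Your ``obstacle'' in the last paragraph is genuinely routine. Writing $m=\ell_H-c_H$ (in the first two cases; $m=\ell_H-b_H$ in the third) one finds, after substitution, that the claimed inequality $(D.K_X)<(H.K_X)$ is equivalent to negativity of a quadratic of the shape $-2m^2+(a_H+b_H+O(1))m-(a_H^2+b_H^2+O(a_H+b_H))$, and the dependence on $c_H$ drops out entirely. One checks directly that its maximum over all $m$ is negative once $a_H,b_H\ge 2$ (for instance in Case~1 the maximum equals $\tfrac18(-7a_H^2-7b_H^2+2a_Hb_H+16)\le\tfrac18(-6(a_H^2+b_H^2)+16)<0$), so Proposition~\ref{prop: ample and bpf divisor} alone suffices and your suggested estimate $x^2/q\ge 2x-q$ is not actually needed. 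One small point to make explicit: after applying the flip symmetry to normalise $[D]$, the $\Pic$-coordinates of $H$ become $(H.A_3),(H.B_3),(H.C_3)$, and you need these $\ge 2$ as well --- but this follows from Proposition~\ref{prop: ample and bpf divisor} applied to the flipped configuration, since the flip is one of the symmetries of Remark~\ref{rmk: symmetry}. Your approach has the advantage of a single unified contradiction rather than the paper's two-case split; the paper's route, on the other hand, avoids any explicit quadratic computation by leaning more heavily on the integer constraint~\eqref{eq: Riemann-Roch for numerically trivial D}.
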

\begin{proof}
    Assume both $D$ and $D-H$ are cohomologically trivial. For a divisor $E \in \Pic X$, let $[E] = [d(E);\, a(E), b(E), c(E)]$, and let $d = d(D)$, $a=a(D)$, $b=b(D)$, $c=c(D)$, and $h = d(H)$. By Proposition~\ref{prop: ample and bpf divisor}, we have $h \geq 12$, hence either $d-h = d(D-H) \leq -1$ or $d \geq 7$. By Serre duality, both $K_X-D$ and $K_X-D+H$ are also cohomologically trivial. Replacing $D$ by $K_X-D+H$ if necessary, we may assume $d \geq 7$. By Corollary~\ref{cor: H-trivial divisors with d>=7}, $[D]$ is symmetric to either $[2\ell+1;\, 0,0,\ell-1]$, $[2\ell;\, 0,1,\ell-1]$, or $[2\ell;\,0,\ell-1,1]$. Then $D$ is strictly nef, hence $D-H$ is not nef. By Proposition~\ref{prop: nefness of H-trivial D with d>=6}, we have $d(D-H) \leq 5$.
    
    Suppose $d(D-H) \leq -1$. By Serre duality, $K_X - D + H$ is cohomologically trivial and $d( K_X - D + H ) \geq 7$. Thus, by the previous observation, $[K_X - D + H]$ is symmetric to either $[2\ell'+1;\, 0,0,\ell'-1]$, $[2\ell';\, 0,1,\ell'-1]$, or $[2\ell';\,0,\ell'-1,1]$. On the other hand, $a(K_X-D+H) = 1- a + a(H) \geq 0$, hence $a \geq a(H)+1 \geq 3$ by Proposition~\ref{prop: ample and bpf divisor}. Similarly, we have $b,c \geq 3$, which contradicts Corollary~\ref{cor: H-trivial divisors with d>=7}.
        
    It remains to consider $0 \leq d(D-H) \leq 5$. For notational convenience, let us write $[D-H] = [d' ;\, a',b',c']$ and $\ell' = \frac 13 (d'+a'+b'+c')$. By Corollary~\ref{cor: H-trivial divisors with d>=7} and Proposition~\ref{prop: ample and bpf divisor}, at least two of $\{a',b',c'\}$ are negative, and $\min\{a',b',c' \} \leq -2$. From the identity \eqref{eq: Riemann-Roch for numerically trivial D} we read
    \[
        (2 \ell' - 3)^2 + 2 \geq 5^2 + 3^2 + 1^2 = 35,
    \]
    thus $\lvert 2\ell'-3 \rvert \geq 7$. Moreover, by Proposition~\ref{prop: numerical trivial class has e>1} and \eqref{eq: 1-eff}, $0 \leq \ell' \leq d' \leq 5$, hence we must have $\ell'=d'=5$. By \eqref{eq: Riemann-Roch for numerically trivial D},
    \[
        (2a'-1)^2 + (2b'-1)^2 + (2c'-1)^2 = 51.
    \]
    One finds that the only possible combination is $51 = 5^2 + 5^2 + 1^2$. Up to permutation, we have $a'=b'=-2$ and $c' \in \{0,1\}$. This leads to a contradiction as $d' = 3\ell' - a' -b' - c' \geq 18$.
\end{proof}

We finish the paper by presenting an Ulrich bundle of rank $2$ over $(X, 3K_X)$.
\begin{theorem}\label{thm: Ulrich of rank 2}
    Let $H = 3K_X$, and let $D_1$ be the divisor
    \[
        \begin{array}{ c | r r r | r r r}
            \Bigl( 10 & \bigl(0\ 01) & \bigl(1\ 11) & \bigl(4\ 01) & (0\ 01) & (1\ 11) & (4\ 11) \Bigr)
        \end{array},
    \]
    and let $D_2 = 4K_X - D_1$. There exists an Ulrich bundle $\mathcal E$ of rank $2$ which fits into the sequence
    \begin{equation}\label{eq: Ulrich of rank 2}
        0 \to \mathcal O_X(D_1) \to \mathcal E(-H) \to \mathcal O_X(D_2) \otimes \mathcal I_Z \to 0,
    \end{equation}
    where $Z\subset X$ is a subscheme of $6$ points in general position.
\end{theorem}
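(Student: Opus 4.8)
The plan is to separate the two issues --- producing the bundle $\mathcal E$, and checking that it is Ulrich --- and to see that both are forced by the numerics of $D_1$ and $D_2$. First I would record the invariants. From Table~\ref{table: symCoords} the numerical classes are $[D_1]=[10;\,0,1,4]$ and $[D_2]=[4K_X-D_1]=[14;\,4,3,0]$; the Riemann--Roch identity $2\chi(\mathcal O(D))=D^2-(D.K_X)+2$ with $D^2=\ell^2-a^2-b^2-c^2$, exactly as in the proof of Proposition~\ref{prop: numerical trivial class has e>1}, gives $\chi(\mathcal O_X(D_1))=0$ and $\chi(\mathcal O_X(D_2))=6$, while $(K_X-D_1).K_X<0$ and $(K_X-D_2).K_X=(D_1-3K_X).K_X<0$ give $h^2(\mathcal O_X(D_1))=h^2(\mathcal O_X(D_2))=0$ by Serre duality. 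Next I would run Algorithm~\ref{Main Algorithm} on both divisors: for $D_1$ this reduces to the effectiveness test, and one checks that none of the eight candidate subtractions makes $D_1$ effective, so $h^0(\mathcal O_X(D_1))=0$; for $D_2$ one trims away the fixed curves $C_0$ and $C_3$ (Proposition~\ref{prop: trimming lemma}, as $(D_2.C_0)=(D_2.C_3)=0$ with nonzero torsion components) to reach the ample class $[12;\,3,2,1]$, whose associated divisor $A$ satisfies $(A-K_X)^2>0$, so Kawamata--Viehweg vanishing gives $h^0(\mathcal O_X(D_2))=\chi(\mathcal O_X(D_2))=6$. Together with the $\chi$'s and $h^2$'s, this shows $\mathcal O_X(D_1)$ is cohomologically trivial and $h^i(\mathcal O_X(D_2))=0$ for $i>0$.

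The conceptual step is the reduction of the Ulrich property. From \eqref{eq: Ulrich of rank 2} we have $\det\mathcal F=\mathcal O_X(D_1+D_2)=\mathcal O_X(4K_X)$, where $\mathcal F:=\mathcal E(-H)$; since $\mathcal F$ has rank $2$ this gives $\mathcal F^\vee\cong\mathcal F(-4K_X)$, hence $H^p(\mathcal F(-3K_X))\cong H^{2-p}(\mathcal F)^\vee$ by Serre duality. As $H=3K_X$, the two Ulrich vanishings $H^p(\mathcal E(-H))=H^p(\mathcal E(-2H))=0$ read $H^p(\mathcal F)=H^p(\mathcal F(-3K_X))=0$, and the second group is the Serre dual of the first; so it suffices to prove $H^p(\mathcal F)=0$ for all $p$. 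Using the cohomological triviality of $\mathcal O_X(D_1)$, the long exact sequence of \eqref{eq: Ulrich of rank 2} collapses to isomorphisms $H^p(\mathcal F)\cong H^p(\mathcal O_X(D_2)\otimes\mathcal I_Z)$; and from $0\to\mathcal O_X(D_2)\otimes\mathcal I_Z\to\mathcal O_X(D_2)\to\mathcal O_Z\to0$ together with $h^1(\mathcal O_X(D_2))=h^2(\mathcal O_X(D_2))=h^1(\mathcal O_Z)=0$, these groups vanish precisely when the restriction $H^0(\mathcal O_X(D_2))\to H^0(\mathcal O_Z)$ is an isomorphism, i.e.\ when the six points of $Z$ impose independent conditions on the $6$-dimensional space $H^0(\mathcal O_X(D_2))$ --- which holds for general $Z$, since six general points off the base locus impose independent conditions whenever $h^0\ge6$. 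Thus, once $\mathcal E$ exists, it is automatically Ulrich for a general $Z$.

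For existence I would invoke the Hartshorne--Serre correspondence. Twisting \eqref{eq: Ulrich of rank 2} by $\mathcal O_X(-D_1)$, a locally free $\mathcal E$ realizing that sequence is the same as a rank-$2$ bundle carrying a section with zero scheme $Z$ and determinant $\mathcal O_X(D_2-D_1)$; such a bundle exists if and only if $Z$ satisfies the Cayley--Bacharach property with respect to $|\,\omega_X\otimes\mathcal O_X(D_2-D_1)\,|=|\,5K_X-2D_1\,|$. A trimming computation (Proposition~\ref{prop: trimming lemma}, using $(5K_X-2D_1).C_0<0$ and $(5K_X-2D_1).C_3<0$) identifies $h^0(\mathcal O_X(5K_X-2D_1))$ with $h^0(\mathcal O_X(C_1+C_2))\le 2$; since no nonzero section of $|5K_X-2D_1|$ vanishes at five general points, the Cayley--Bacharach condition is vacuous for a general $Z$, and $\mathcal E$ exists. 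Choosing $Z$ general enough to satisfy the finitely many open conditions used above --- lying off the base loci of $|D_2|$ and $|5K_X-2D_1|$, and imposing independent conditions on $|D_2|$ --- then completes the proof.

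I expect the genuine work to be the two Algorithm computations $h^0(\mathcal O_X(D_1))=0$ and $h^0(\mathcal O_X(D_2))=6$, not any conceptual obstacle: the construction is arranged so that $D_1+D_2=4K_X$ (giving the self-duality), $\mathcal O_X(D_1)$ is cohomologically trivial, $h^0(\mathcal O_X(D_2))$ equals $\deg Z=6$, and $h^0(\mathcal O_X(5K_X-2D_1))$ is tiny, and the $D_1$ in the statement is precisely one for which all four hold at once.
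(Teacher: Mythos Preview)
Your argument is correct and matches the paper's approach: Hartshorne--Serre/Cayley--Bacharach for existence, the Section~\ref{sec: Main Algorithm} computations giving $\mathcal O_X(D_1)$ cohomologically trivial and $h^0(\mathcal O_X(D_2))=6$, and self-duality plus Serre duality to reduce the two Ulrich vanishings to one. The paper records the sharper value $h^0(5K_X-2D_1)=1$ (versus your $\leq 2$) and presents existence before verification rather than after, but these differences are purely cosmetic.
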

\begin{proof}
    The proof is a standard application of Serre correspondence\,(see~\cite[Section~5.1]{HuybrechtsLehn}). To see that $\mathcal E$ is locally free, we need to check the Cayley-Bacharach property for the pair $(K_X+D_2-D_1,\, Z)$: namely, if $s \in H^0(K_X+D_2-D_1)$ vanishes along a subscheme $Z' \subset Z$ of length $5$, then $s$ vanishes along $Z$. As $h^0(K_X+D_2-D_1) = 1$ and $Z$ is in general position, such $s$ with $s \big\vert_{Z'} = 0$ should be the zero section. Hence, the Cayley-Bacharach property holds trivially for the pair $(K_X+D_2-D_1,\, Z)$, and $\mathcal E$ is a locally free sheaf of rank $2$. Using the methods developed in Section~\ref{sec: Main Algorithm}, we find that $D_1$ is cohomologically trivial, and that $h^0(D_2)=6$, $h^q(D_2)=0$ for $q=1,2$. From the short exact sequence
    \[
        0 \to \mathcal O_X(D_2) \otimes \mathcal I_Z \to \mathcal O_X(D_2) \to \mathcal O_Z \to 0,
    \]
    we have $H^0(\mathcal O_X(D_2)) \simeq H^0(\mathcal O_Z)$ as $Z$ is the subscheme of $6$ points in general position. In particular, we have $h^p(\mathcal O_X(D_2) \otimes \mathcal I_Z) = 0 $ for $p=0,1,2$. Then by \eqref{eq: Ulrich of rank 2}, $\mathcal E(-H)$ is cohomologically trivial. Since $\det 
\mathcal E(-H) = \mathcal O_X(D_1 + D_2) = \mathcal O_X(4K_X)$, we have $\mathcal E \simeq \mathcal E^\vee \otimes \det \mathcal E = \mathcal E^\vee(10K_X)$. By Serre duality, we have
    \[
        h^p( \mathcal E(-2H)) = h^{2-p}( \mathcal E^\vee ( 2H + K_X) ) = h^{2-p}( \mathcal E(-H)), 
    \]
    hence $\mathcal E(-2H)$ is cohomologically trivial. This proves that $\mathcal E$ is an Ulrich bundle.
\end{proof}

\begin{remark}\label{rmk: why differ from Casnati's}
    The existence of Ulrich bundles of rank $2$ over $(X, 3K_X)$ is already confirmed in the work \cite{Casnati:SpecialUlrich} by Casnati. In \cite[Theorem~1.1]{Casnati:SpecialUlrich}, Casnati proves that there exists an Ulrich bundle $\mathcal F$ of rank $2$ which fits into the short exact sequence
    \[
        0 \to \mathcal O_X(H+K_X) \to \mathcal F \to \mathcal O_X(2H) \otimes \mathcal I_W \to 0,
    \]
    where $W$ is a general set of $20$ points. From the construction, we have $h^0(\mathcal F(-H-K_X)) \neq 0$.
    
    In Theorem~\ref{thm: Ulrich of rank 2}, however, we have $h^0(D_1-K_X) = h^0(D_2-K_X) = 0$, thus $h^0(\mathcal E(-H-K_X))=0$. It follows that Theorem~\ref{thm: Ulrich of rank 2} presents an Ulrich bundle which cannot be obtained by the method of Casnati. It would be interesting to ask whether these two bundles lying in the same irreducible component of the moduli space of Ulrich bundles. %It might be more interesting if one can show that these two bundles cannot lie on the same irreducible components, resulting in the proof of non-irreducibility of the moduli space.
\end{remark}
%

%:
\end{document}